\DeclareMathAlphabet{\mathpzc}{OT1}{pzc}{m}{it}
\newtheorem{theorem}{Theorem}[section]
\newtheorem*{theorem*}{Theorem}
\newtheorem{theorem-definition}[theorem]{Theorem-Definition}
\newtheorem{lemma-definition}[theorem]{Lemma-Definition}
\newtheorem{definition-prop}[theorem]{Proposition-Definition}
\newtheorem{corollary}[theorem]{Corollary}
\newtheorem{prop}[theorem]{Proposition}
\newtheorem*{prop*}{Proposition}
\newtheorem{lemma}[theorem]{Lemma}
\newtheorem{cor}[theorem]{Corollary}
\newtheorem{definition}[theorem]{Definition}
\newtheorem*{definition*}{Definition}
\theoremstyle{definition}
\newtheorem{example}[theorem]{Example}
\newtheorem{remark}[theorem]{Remark}
\newcommand{\Q}{\ensuremath{\mathbb{Q}}}
\newcommand{\cX}{\ensuremath{\mathscr{C}}}
\newcommand{\cZ}{\ensuremath{\mathscr{Z}}}
\newcommand{\cM}{\ensuremath{\mathscr{M}}}
\newcommand{\cY}{\ensuremath{\mathscr{Y}}}
\newcommand{\cL}{\ensuremath{\mathscr{L}}}
\newcommand{\cC}{\ensuremath{\mathscr{C}}}
\newcommand\calo{\mathcal O}
\newcommand{\mcr}[1]{\mathscr{#1}}
\renewcommand{\cM}{\ensuremath{\mathscr{M}}}
\renewcommand{\cZ}{\ensuremath{\mathscr{Z}}}
\renewcommand{\cY}{\ensuremath{\mathscr{Y}}}
\newcommand{\Spec}{\ensuremath{\mathrm{Spec}\,}}
\newcommand{\Lie}{\mathrm{Lie}}
\newcommand{\Jac}{\mathrm{Jac}}
\newcommand{\tame}{\mathrm{tame}}
\newcommand{\wild}{\mathrm{wild}}
\newcommand{\coker}{\mathrm{coker}}
\newcommand{\Art}{\mathrm{Art}}
\numberwithin{equation}{section}
\title[Base change conductors]{Base change conductors through intersection theory and quotient singularities}
\author{Dennis Eriksson}
\author{Lars Halvard Halle}
\author{Johannes Nicaise}
\address{Dennis Eriksson \\ Department of Mathematics \\ Chalmers University of Technology and University of Gothenburg, Sweden}
\email{dener@chalmers.se}
\address{Lars Halvard Halle \\ Department of Mathematics\\
University of Bologna, Italy}
\email{larshalvard.halle@unibo.it}
\address{Johannes Nicaise \\ Department of Mathematics\\
KU Leuven, Belgium
}
\email{johannes.nicaise@kuleuven.be}
\begin{document}
\begin{abstract}

We perform a systematic study of the base change conductor for Jacobians. Through the lens of intersection theory and Deligne's Riemann–Roch theorem, we present novel computational approaches for both the tame and wild parts of the base change conductor. Our key results include a general formula of the tame part, as well as a computation of the wild part in terms of Galois quotients of semistable models of the curves. We treat in detail the case of potential good reduction when the quotient only has weak wild quotient singularities, relying on recent advances by Obus and Wewers.
\end{abstract}
\maketitle
\tableofcontents

\section{Introduction}
\subsection{}

An algebraic family of varieties over a punctured curve can be completed into a family over the whole curve. Such models of the family are not unique, and some are better behaved than others, for instance smooth and proper models. In the setting of an abelian variety $A$ defined over the fraction field $K$ of a Dedekind domain $R$, one can ask for models over $R$ that also extend the group structure. By Grothendieck's semi-abelian reduction theorem (cf. \cite[IX.3.6]{SGA71}) it is in fact possible to find a model of $A$ which forms a semi-abelian scheme over the base, if one allows a finite ramified extension. This turns out to be the best one can hope for in general.

In this article, we study an invariant called the \emph{base change conductor}, which, for a given abelian $K$-variety $A$, is a rational number $c(A)$ that measures the failure of $A$ to have semi-abelian reduction over $R$. Roughly, the value $c(A)$ is obtained by comparing the Lie algebra of the N\'eron model (which is not compatible with base change in general) before and after semiabelian reduction.

The base change conductor was first introduced by Chai and Yu in the context of tori \cite{chai-yu}, and more generally for (semi-)abelian varieties by Chai in \cite{chai}, see \ref{subsec:basechangedef} for the definition. It shows up, directly or indirectly, in many arithmetic problems of abelian varieties. For instance, if $ A $ is an abelian variety over a number field, the base change conductor appears as a correction factor when comparing the Faltings height (which is defined in terms of the Lie algebra of the N\'eron model, cf. \cite{Faltings}) computed before and after making an extension over which $A$ acquires semi-abelian reduction.       

The base change conductor was further studied by the last two authors in the context of their work on \emph{motivic zeta functions} (cf. e.g. \cite{HaNi} and \cite{HaNi-book}). For an abelian variety $A$ defined over a strictly Henselian discretely valued field $K$, the motivic zeta function $Z_A(T)$ is a formal power series which encodes the asymptotic behaviour of the set of rational points of $A$ under ramified extensions of $K$. In particular, the last two authors introduced a natural decomposition
\begin{displaymath}
c(A) = c_{\tame}(A) + c_{\wild}(A)
\end{displaymath}
where both $c_{\tame}(A)$ and $c_{\wild}(A)$ are non-negative numbers. They moreover showed that if either $A$ acquires semiabelian reduction after a tame extension of $K$ (in which case $c_{\wild}(A)=0$), or if $A$ is a Jacobian, the invariant $c_{\tame}(A)$ is the unique pole of $Z_A(T)$.

\subsection{} The aim of this paper is to study the base change conductor of a \emph{Jacobian variety}, and to find effective ways to compute it from the input provided by suitable proper models of the curve over the ring of integers. The problem is local on the base, so for the rest of the introduction we let $R$ denote a complete discrete valuation ring with algebraically closed residue field $k$.

 Let $C$ be a smooth projective curve over $K$ 
 of index $1$, and $\cX$ any regular model of $C$ over $R$. Then the relative Picard scheme $ \mathrm{Pic}^0_{\cX/S} $ can be identified with the identity component of the N\'eron model of $\mathrm{Jac}(C)$ and the Lie algebra in question is dual to the Hodge bundle $ H^0(\cX , \omega_{\cX/S})$. 
To study the base change conductor for a Jacobian one can therefore equivalently investigate how the determinant of the Hodge bundle changes under semi-stable reduction of the curve $C$, see \ref{subsec:Jacobianbcc-def} for details.

At this stage, it might be useful to remark that in the complex geometric setting, the base change conductor is essentially given by the logarithm eigenvalues of the semi-simple part of the monodromy acting on $H^0(C, \omega_C)$ (cf. \cite[Theorem A]{CDG-1}, \cite[Proposition 2.10]{CDG-2}, \cite{JumpsMonodromy}).
\subsection{}
In order to study how the determinant of the Hodge bundle behaves under semi-stable reduction, we use tools from intersection theory.
One of the central ingredients in this endeavour is Deligne's Riemann--Roch theorem. This was first introduced in \cite{detcoh}, and together with further developments provides a canonical isomorphism between a power of the determinant of the Hodge bundle with certain bundles that encode local intersection numbers, modified with some further invariants of singularities  (cf. \cite{Eri16, ErikssonErratum, Saitodisc}). This theorem allows us to replace the questions about determinants of Hodge bundles with questions about local intersection theory and singularity invariants.

One of the invariants that contribute is the Artin conductor $\Art_{\cX/S}$ (cf. \eqref{def:Artinconductor}). When the total space $\cX$ is only assumed  normal and $\mathbb{Q}$-Gorenstein, one obtains further contributions from an invariant $\mu_{\cX,x}$ attached to each non-regular point $x \in \cX$. The definition of $\mu_{\cX,x}$ generalizes the formula of Laufer \cite{Laufer} of the Milnor number of an isolated surface singularity in the complex setting. We also call it the Milnor number. This invariant can, in our generality, take rational values. It seems like an interesting problem to study $ \mu_{\cX,x}$, and the related discrepancy number $\nu_{\cX, x}$, in further detail. 

 We highlight the following consequence of the above intersection theoretic intersections, which can be found in Corollary \ref{cor:basechconformula} \eqref{item1:corbasechconformula} and \eqref{item3:corbasechconformula}. Below, the sum of the invariants $\mu_{\cX,x}$ will be denoted $\mu_{\cX}$.

\begin{prop}\label{prop:introcomputation}
Suppose that $K'/K$ realises semistable reduction for $C$, that $\cX$ is a  model of $C$ over $R$ and that $\pi :\cX' \to \cX$ is a model of $C \times_K K'$ dominating $\cX \times_R R'$.

\begin{enumerate}
    \item Suppose that $\cX$ and $\cX'$ are both regular models. Then
    $$- 12 c(J)  = \frac{1}{ e }\left( \Gamma^2 + 2 \Gamma \cdot \pi^* \omega_{\cX/S} - \mathrm{Art}_{\cX'/S'} + e \cdot  \mathrm{Art}_{\cX/S}\right).$$
    \item Suppose that $\cX$ is normal and $\mathbb{Q}$-Gorenstein with at most rational singularities, and that its normalized base change $\cX'$ is smooth over $R'$. Then 
    $$        - 12 c(J)  = \frac{2 }{ e } \Gamma \cdot \pi^* \omega_{\cX/S} +   \mathrm{Art}_{\cX/S}  - \mu_\cX.
 $$
 
\end{enumerate}
 Here we have denoted by $\Gamma = \omega_{\cX'/S'} - \pi^{*}\omega_{\cX/S}$ the discrepancy divisor.
\end{prop}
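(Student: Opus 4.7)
The plan is to apply Deligne's Riemann--Roch theorem in the forms recalled immediately before the statement to both $\cX$ and $\cX'$, and to convert the resulting comparison of Hodge determinants into the stated intersection-theoretic formulas via the identity $\omega_{\cX'/S'}=\pi^{*}\omega_{\cX/S}+\Gamma$ together with the projection formula.

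First I would use the reformulation of the Jacobian base change conductor from \S\ref{subsec:Jacobianbcc-def} to express
$$-12\,c(J)\;=\;\frac{12\,\deg\lambda_{\cX'/S'}}{e}\;-\;12\,\deg\lambda_{\cX/S},$$
where $\lambda_{\cZ/T}:=\det H^{0}(\cZ,\omega_{\cZ/T})$ and $\deg$ measures the position of the lattice relative to a fixed trivialization at the generic fiber. Then Deligne's Riemann--Roch, in the generality of \cite{Eri16,ErikssonErratum,Saitodisc} for normal $\mathbb{Q}$-Gorenstein models with at most rational singularities, gives
$$12\,\deg\lambda_{\cZ/T}\;=\;\omega_{\cZ/T}^{2}\;-\;\mathrm{Art}_{\cZ/T}\;+\;\mu_{\cZ},$$
with $\mu_{\cZ}=0$ when $\cZ$ is regular and with both $\mathrm{Art}_{\cZ/T}$ and $\mu_{\cZ}$ vanishing when $\cZ$ is smooth over $T$.

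Next, since $\pi$ has generic degree $e$, the projection formula yields $(\pi^{*}\omega_{\cX/S})^{2}=e\,\omega_{\cX/S}^{2}$, and hence
$$\omega_{\cX'/S'}^{2}\;=\;e\,\omega_{\cX/S}^{2}+2\,\Gamma\cdot\pi^{*}\omega_{\cX/S}+\Gamma^{2}.$$
Feeding this into the difference of Riemann--Roch identities for $\cX$ and $\cX'$ and simplifying produces the general formula
$$-12\,c(J)\;=\;\frac{1}{e}\Bigl(\Gamma^{2}+2\,\Gamma\cdot\pi^{*}\omega_{\cX/S}-\mathrm{Art}_{\cX'/S'}+\mu_{\cX'}\Bigr)+\mathrm{Art}_{\cX/S}-\mu_{\cX}.$$
Part (1) is then immediate: $\mu_{\cX}=\mu_{\cX'}=0$ in the regular case, and the formula above is exactly the one stated. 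For part (2), the smoothness of $\cX'$ over $R'$ eliminates both $\mathrm{Art}_{\cX'/S'}$ and $\mu_{\cX'}$, and matching the stated formula reduces to the additional identity $\Gamma^{2}=0$.

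The hard part will be establishing $\Gamma^{2}=0$ in part (2). The strategy is to factor $\pi=\pi_{2}\circ\pi_{1}$, with $\pi_{2}\colon\cX\times_{R}R'\to\cX$ the base change and $\pi_{1}\colon\cX'\to\cX\times_{R}R'$ the normalization, to identify $\Gamma$ with the dualizing contribution $\omega_{\cX'/\cX\times_R R'}$ of $\pi_{1}$, and to use a projection-formula/trace argument exploiting the rational singularity hypothesis on $\cX$ (which controls how the normalization interacts with intersection theory on the smooth surface $\cX'$) to force the self-intersection to vanish.
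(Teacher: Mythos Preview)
Your overall approach is the same as the paper's: apply the Deligne--Riemann--Roch identity $12\deg\lambda=\omega^{2}-\mathrm{Art}+\mu$ to both models, take the difference, and expand $\omega_{\cX'/S'}^{2}$ via $\omega_{\cX'/S'}=\pi^{*}\omega_{\cX/S}+\Gamma$ and the projection formula. Your general formula and the deduction of part~(1) are exactly what the paper does.

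For part~(2), you correctly reduce to the vanishing $\Gamma^{2}=0$, but you substantially overestimate its difficulty. The paper disposes of it in one sentence: since $\cX'$ is \emph{smooth} over $R'$, its special fiber $\cX'_{k}$ is smooth and (since $C$ is geometrically connected) irreducible. Any ($\mathbb{Q}$-)divisor supported on the special fiber of $\cX'$ is therefore a rational multiple of $\cX'_{k}$; in particular $\Gamma=c\,\cX'_{k}$ for some $c\in\mathbb{Q}$. But $\cX'_{k}$ is the pullback of the closed point of $S'$, hence has trivial intersection with every vertical divisor, and so $\Gamma^{2}=c^{2}(\cX'_{k})^{2}=0$. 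No factorization through the normalization, no trace argument, and no use of the rational-singularity hypothesis on $\cX$ is needed for this step. Your proposed route via $\omega_{\cX'/(\cX\times_{R}R')}$ would eventually recover the same conclusion, but it obscures the point and introduces unnecessary bookkeeping.
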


These two conclusions will be the central starting points for the main applications. 

Let us also mention that in the complex geometric setting similar invariants were studied when the base is a smooth projective curve in e.g. \cite{SLTan}, \cite{SLTan2} where analogues of $c(J)$ were studied in the context of slopes.

\subsection{}

Our first application of Proposition \ref{prop:introcomputation} appears in the computation of the tame part of the base change conductor. Using tools from logarithmic geometry, the authors showed previously in \cite{logjumps} that $c_\tame(J)$ can be computed from any \emph{sncd}-model $\cX$ (see \ref{subsubsec:sncdmodel} for this terminology). More precisely, we proved that $c_\tame(J)$ equals the sum of Edixhoven's \emph{jumps}, which are non-negative rational numbers $ j_1, \ldots, j_{g(C)}$ (see \cite{EdixhovenTame} for the definition). We furthermore gave an explicit formula for each $j_i$ solely in terms of combinatorial data, by which we mean the dual graph of $\cX_k$ where each vertex is labelled with the multiplicity and the genus of the corresponding component (\cite[Thm]{logjumps}). 

As a consequence of these results, a fundamental result of Winters \cite{winters} allows us to reduce the computation of $c_\tame(J)$ to the case of equal characteristic 0. Under this assumption, both the extension $K'/K$ and the model $\cX'$ as in the first part of Proposition \ref{prop:introcomputation} can be explicitly constructed by hand, rendering all terms explicit. This gives a far more compact formula, Theorem \ref{thm:introtamebcc} below, for the tame part of the base change conductor, avoiding the complicated expressions for the individual jumps.

To state the result, we introduce some notation. We denote by $E(\cX)$ the number of nodes in the semistable curve $\cX_{k,\mathrm{red}}$. The \emph{virtual} number of nodes (cf. Definition \ref{def:virtualnodes})  is defined as
\begin{displaymath}
    R(\cX) = \frac{1}{3}   \sum_{x \in \mathrm{Sing} (\cX_{k,\mathrm{red}})} \frac{n_x^2 + n_x'^2 + (n_x,n_x')^2}{n_x n_x'}
\end{displaymath}
where $n_x$ and $n_x'$ denote the multiplicities of the components intersecting in $x$ and $(n_x, n_x')$ denotes the greatest common divisor. 

We find the following expression (cf. Corollary \ref{cor:nodediff}): 
\begin{theorem}\label{thm:introtamebcc}
    The tame part of the base change conductor is given by 

    $$c_\tame(J) =  \frac{u}{2} -\frac{1}{4}\left( R(\cX)-E(\cX) \right),$$
where $u$ is the unipotent rank of the N\'eron model of $J$.
\end{theorem}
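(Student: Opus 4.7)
My plan is to apply Proposition~\ref{prop:introcomputation}(1) after reducing to equal characteristic zero. By \cite{logjumps}, $c_\tame(J)$ depends only on the combinatorial type of an $\mathrm{sncd}$-model $\cX$ of $C$, and the same is clearly true of the right-hand side of Theorem~\ref{thm:introtamebcc}. Winters' theorem \cite{winters} lifts any such combinatorial data to an $\mathrm{sncd}$-model of a curve defined over a discrete valuation ring of equal characteristic zero, so it suffices to prove the identity in that setting. There the wild part vanishes, so $c(J) = c_\tame(J)$, and Proposition~\ref{prop:introcomputation}(1) computes this quantity directly.

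Fix such an $\mathrm{sncd}$-model $\cX$, and let $e$ be the least common multiple of the multiplicities of the components of $\cX_k$. Take $K'/K$ to be the totally ramified Kummer extension of degree $e$. Locally at a node of $\cX_k$ where components of multiplicities $(n_x, n_x')$ meet, $\cX \times_R R'$ has a toric cyclic quotient singularity, and its minimal resolution is a Hirzebruch--Jung chain of smooth rational curves. Gluing these local resolutions produces a model $\cX' \to \cX \times_R R'$ that is smooth over $R'$ with semistable central fibre; let $E(\cX')$ denote its number of nodes.

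Now apply Proposition~\ref{prop:introcomputation}(1). In equal characteristic zero with regular models, each Artin conductor reduces to an Euler characteristic difference: for the semistable model this gives $\mathrm{Art}_{\cX'/S'} = -E(\cX')$, and $\mathrm{Art}_{\cX/S}$ admits an explicit combinatorial expression in terms of the genera and multiplicities of the components of $\cX_k$ and its number of nodes $E(\cX)$. The discrepancy divisor $\Gamma = \omega_{\cX'/S'} - \pi^*\omega_{\cX/S}$ is supported on the exceptional chains of $\pi$, and its self-intersection $\Gamma^2$ together with the pairing $\Gamma \cdot \pi^*\omega_{\cX/S}$ splits into local contributions indexed by the nodes of $\cX_k$. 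Standard Hirzebruch--Jung continued-fraction calculations show that the local contribution at a node depends only on $(n_x, n_x')$, and the crux of the proof is to identify it, after the factor $1/e$, as a multiple of $(n_x^2 + n_x'^2 + (n_x, n_x')^2)/(n_x n_x')$, reproducing the summands of $R(\cX)$.

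Assembling these contributions and inserting them into Proposition~\ref{prop:introcomputation}(1) will express $-12 c_\tame(J)$ as a linear combination of $R(\cX)$, $E(\cX)$, $E(\cX')$, and global data of the dual graph. The unipotent rank $u$ then enters via the decomposition $g = a + t + u$ of the generic genus together with the identifications of $a$ with the sum of genera of components of $\cX_k$ and of $a + t$ with the sum of genera of components of the semistable $\cX'_k$, so that $u$ equals the difference in first Betti numbers of the two dual graphs. The main technical obstacle will be the intersection-theoretic bookkeeping on the Hirzebruch--Jung chains: showing that the continued-fraction data at each node collapses to the symmetric form appearing in $R(\cX)$, and that all remaining terms reorganize precisely into $u/2 + E(\cX)/4$.
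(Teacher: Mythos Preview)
Your overall strategy---reduce to equal characteristic zero via Winters and then apply Proposition~\ref{prop:introcomputation}(1)---is exactly the paper's route (Proposition~\ref{cor-main}, Theorem~\ref{thm:tameformula}, Corollary~\ref{cor:nodediff}). But two points in your plan are off, and one of them would make the computation substantially harder than necessary.

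First, a minor slip: the model $\cX'$ you construct is \emph{semistable}, not smooth over $R'$ (you say both). It has $E(\cX')$ nodes, and these feed into $\mathrm{Art}_{\cX'/S'} = -E(\cX')$.

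Second, and more seriously, your claim that the discrepancy $\Gamma$ is supported on the exceptional chains of $\pi$ is wrong. The strict transforms of the components $E_i$ of $\cX_k$ appear in $\Gamma$ with nonzero coefficient whenever $n_i>1$. The paper avoids any Hirzebruch--Jung bookkeeping by observing that, since the normalized base change followed by minimal resolution is log-\'etale for the divisorial log structures, the log canonical bundle pulls back exactly: $\pi^*\omega^{\log}_{\cX/S}=\omega^{\log}_{\cX'/S'}$. As $\cX'$ is semistable, $\omega^{\log}_{\cX'/S'}=\omega_{\cX'/S'}$, whence
\[
\Gamma \;=\; \pi^*\bigl(\cX_{k,\mathrm{red}}-\cX_k\bigr).
\]
Then $\Gamma^2 = e\,\cX_{k,\mathrm{red}}^2$ and $\Gamma\cdot\pi^*\omega_{\cX/S} = e\,(\cX_{k,\mathrm{red}}\cdot\omega_{\cX/S}) + e\,\chi(C_{K^s})$, and everything is computed on $\cX$ itself via adjunction and the relation $E_i^2=-\sum_{j\ne i}(n_j/n_i)E_i\cdot E_j$. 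The only input from $\cX'$ is the node count $E(\cX')=e\sum_x (n_x,n_x')^2/(n_xn_x')$, which comes from the explicit local description of the $A_n$-singularities on the normalized base change. This is how the symmetric expression in $R(\cX)$ falls out without continued fractions.

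Finally, your plan to extract $u$ by comparing Betti numbers of the two dual graphs is more circuitous than needed. The paper simply computes $\mathrm{Art}_{\tame}(\cX)=\chi(C_{K^s})-\chi(\cX_k)=-2u-E(\cX)$ directly from $g=a+t+u$, $a=\sum g(E_i)$, and $t=b_1(\Gamma(\cX))$ (Lemma~\ref{lemma:tameArt}); this combined with $c_\tame(J)=-\tfrac14(\mathrm{Art}_{\tame}(\cX)+R(\cX))$ gives the formula immediately.
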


In certain situations, one can show independently of these methods that $c_{\tame}(J) =  \frac{u}{2}$. Our formula therefore yields an interesting \emph{combinatorial} obstruction to some potential reduction types. As a (non-exhaustive) illustration, we obtain the following consequence of Theorem \ref{thm-main2} and Theorem \ref{thm-mainwild} in combination with Proposition \ref{prop-weakwildsing}, respectively:

\begin{cor}
    Suppose that $\mathrm{Jac}(C)$ \emph{either} has potential multiplicative reduction, \emph{or} acquires good ordinary reduction after a purely wild extension. Then
    $$R(\cX) = E(\cX) $$
    for any sncd-model $\cX$ of $C$.

\end{cor}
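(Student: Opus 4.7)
The plan is to apply the formula of Theorem \ref{thm:introtamebcc} directly. Rewriting it as
\begin{displaymath}
R(\cX) - E(\cX) = 2u - 4\, c_\tame(J),
\end{displaymath}
the desired equality $R(\cX) = E(\cX)$ becomes equivalent to the single assertion $c_\tame(J) = u/2$, in a manner independent of the particular sncd-model $\cX$ chosen. My task therefore reduces to verifying that the tame base change conductor equals half the unipotent rank of the N\'eron model of $J$ under each of the two hypotheses.

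For the first case, where $\mathrm{Jac}(C)$ has potential multiplicative reduction, I would simply invoke Theorem \ref{thm-main2}, which is set up to yield $c_\tame(J) = u/2$ under precisely this hypothesis. Heuristically, in the potentially multiplicative setting the entire defect to semi-abelian (toric) reduction is carried by the unipotent rank of the N\'eron model, and the conductor captures exactly this defect with coefficient one half.

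For the second case, where $J$ acquires good ordinary reduction after a purely wild extension, I would first appeal to Proposition \ref{prop-weakwildsing} to ensure that the Galois quotient of a semistable model of $C$ has at most weak wild quotient singularities, placing us within the scope of Theorem \ref{thm-mainwild}; the conclusion of that theorem again delivers $c_\tame(J) = u/2$. The main difficulty is not in the present corollary, which is a formal consequence of Theorem \ref{thm:introtamebcc} once the two deeper inputs are granted, but rather in establishing those inputs themselves — particularly the wild case, which rests on the intersection-theoretic and singularity-theoretic machinery developed earlier in the paper and on the detailed analysis of weak wild quotient singularities via the work of Obus and Wewers.
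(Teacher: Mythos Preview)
Your proposal is correct and follows essentially the same route as the paper: the introduction explicitly presents this corollary as a consequence of Theorem \ref{thm-main2} and of Theorem \ref{thm-mainwild} combined with Proposition \ref{prop-weakwildsing}, and the passage from $c_\tame(J)=u/2$ to $R(\cX)=E(\cX)$ is exactly Corollary \ref{cor:nodediff} (the body-text version of Theorem \ref{thm:introtamebcc}). One minor remark: in the second case the relevant model is the \emph{smooth} model over $R'$ (a special case of semistable), and Theorem \ref{thm-main2} actually applies under the broader hypothesis that the abelian part of the Raynaud extension has good reduction, of which potential multiplicative reduction is the special case where that abelian part is trivial.
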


\subsection{}

To compute the wild part $c_{\wild}(J)$ of the base change conductor is more challenging. We focus on the case where $C$ has \emph{potential good reduction}, i.e., when $C$ admits a smooth model $ \cX' $ after a finite Galois extension $K'/K$. In this case the most obvious choice for the $R$-model $\cX$ is the quotient of $ \cX' $ by $G = \mathrm{Gal}(K'/K)$. This satisfies a version of the second part of Proposition \ref{prop:introcomputation}, which is the starting point for the computation of $c(J)$ in this situation. 

We assume that $K'/K$ is purely wild, in which case the quotient $\cX$ will have wild quotient surface singularities. Such singularities are very far from being understood in general, but important progress has been made in later years. We analyze in detail the case when the quotient has \emph{weak wild quotient singularities}. This class was introduced and studied in depth by Obus and Wewers \cite{ObWe} generalizing earlier work by Lorenzini \cite{Lor14} when $ G = \mathbb{Z}/p $. Further developments will appear in the upcoming Ph.D thesis of Waeterschoot \cite{weatershoot2024thesis} .

For our purposes, the key feature of a weak wild quotient singularity $ x \in \cX $ is that it is formally isomorphic to the unique singularity of a certain proper model $\cY$ of $\mathbb{P}^1$. This model is moreover obtained as a global quotient of a smooth and proper $R'$-scheme $\cZ$, by a subgroup $ H \subset \mathrm{Gal(K'/K)}$. This fact allows many local properties of a weak wild quotient singularity to be computed globally on $\cY$.

In  particular, we compute the Milnor number of a weak wild quotient singularity (Theorem \ref{Theorem:mainSwan}):
\begin{theorem}
We keep the notation above, and write $L = (K')^H$. Then 
$$ \mu_{ \cY, x } = 4\left(1 - \frac{1}{\vert H \vert} + \frac{\mathbf{sw}_{K'/L}}{\vert H \vert}\right). $$    
\end{theorem}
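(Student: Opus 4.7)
The plan is to apply part (2) of Proposition~\ref{prop:introcomputation} to the pair $(\cY,\cZ)$, viewing $\cY$ as a (singular) proper model of $\mathbb{P}^{1}_{L}$ and $\cZ$ as its smooth proper $R'$-model, with $\pi\colon\cZ\to\cY$ the quotient map by $H$. Since the Jacobian of $\mathbb{P}^{1}$ is trivial, the base change conductor vanishes; moreover $x$ is the unique non-regular point of $\cY$, so that $\mu_{\cY}=\mu_{\cY,x}$. The proposition thus reduces to the identity
$$\mu_{\cY,x} \;=\; \frac{2}{|H|}\,\Gamma\cdot\pi^{*}\omega_{\cY/\calo_{L}} \;+\; \Art_{\cY/\calo_{L}},$$
where $\calo_{L}$ denotes the ring of integers of $L$ and $\Gamma=\omega_{\cZ/R'}-\pi^{*}\omega_{\cY/\calo_{L}}$ is the discrepancy divisor. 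The task then splits into computing these two global invariants.

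First I would analyse the intersection number $\Gamma\cdot\pi^{*}\omega_{\cY/\calo_{L}}$. Combining the adjunction formula $\omega_{\cZ/\calo_{L}}=\pi^{*}\omega_{\cY/\calo_{L}}+R_{\pi}$ for the finite quotient $\pi$ with the relative cotangent identity $\omega_{\cZ/\calo_{L}}=\omega_{\cZ/R'}+f^{*}\omega_{R'/\calo_{L}}$, where $f\colon\cZ\to\Spec R'$, one obtains $\Gamma=R_{\pi}-f^{*}\omega_{R'/\calo_{L}}$. The generic fibre of $\pi$ is the descent map $\mathbb{P}^{1}_{K'}\to\mathbb{P}^{1}_{L}$, whose ramification contribution is controlled by the different of $K'/L$, while $f^{*}\omega_{R'/\calo_{L}}$ is a multiple of $\cZ_{k}$ determined by the different exponent $d_{K'/L}$. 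By the projection formula these assemble into an expression involving only $|H|$ and $\mathbf{sw}_{K'/L}$, using the identity $\mathbf{sw}_{K'/L}=d_{K'/L}-(|H|-1)$ for the totally wildly ramified extension $K'/L$.

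Next I would compute $\Art_{\cY/\calo_{L}}$. Since the generic fibre is $\mathbb{P}^{1}$, whose $\ell$-adic cohomology carries neither monodromy nor a Swan contribution, the Artin conductor receives only a combinatorial Euler-characteristic term together with a wild part concentrated at $x$. The global presentation of $\cY$ as the quotient $\cZ/H$ of a smooth scheme makes both accessible: the Euler-characteristic term is read off from the special fibre of the quotient, and the wild contribution is identified with $\mathbf{sw}_{K'/L}$ via Grothendieck--Ogg--Shafarevich applied to the generically \'etale cover $\cZ\to\cY$.

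Substituting these two computations into the displayed identity produces the asserted formula. The main obstacle is the second step: pinning down the wild part of $\Art_{\cY/\calo_{L}}$ and showing that it matches $\mathbf{sw}_{K'/L}$ with precisely the right normalisation. The weak wild quotient singularity hypothesis is essential here, since it guarantees that the local ramification structure at $x$ can be read off globally from the cover $\cZ\to\cY$, thereby reducing a genuinely local Swan calculation to a tractable global invariant of the extension.
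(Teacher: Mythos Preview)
Your overall strategy is exactly the paper's: apply Corollary~\ref{cor:basechconformula}\,(3) (equivalently Proposition~\ref{prop:introcomputation}\,(2)) to the model $\cY$ of $\mathbb{P}^{1}_{L}$, use $c(J)=0$, and solve for $\mu_{\cY,x}$. However, your execution of the two remaining terms is off.

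For the discrepancy term you take an unnecessarily circuitous route. The paper simply invokes Corollary~\ref{cor:discdiv}, which gives $\Gamma=(1-|H|-\mathbf{sw}_{K'/L})\,\cZ_{k}$ directly. Since $\cZ_{k}\cdot\pi^{*}\omega_{\cY/\calo_{L}}=\cZ_{k}\cdot\omega_{\cZ/R'}=-\chi(\mathbb{P}^{1}_{k})=-2$ (the $\Gamma$-part of $\pi^{*}\omega$ contributes nothing, being a multiple of the fibre), one reads off
\[
\frac{2}{|H|}\,\Gamma\cdot\pi^{*}\omega_{\cY/\calo_{L}}=4\left(1-\frac{1}{|H|}+\frac{\mathbf{sw}_{K'/L}}{|H|}\right)
\]
immediately. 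Your approach via $R_{\pi}$ and the different can be made to work and would reach the same value, but it is more laborious.

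The genuine problem is your treatment of $\Art_{\cY/\calo_{L}}$. By the paper's definition \eqref{def:Artinconductor} this is $\chi(\mathbb{P}^{1}_{K^{s}})-\chi(\cY_{k})-\mathrm{Sw}\,H^{1}(\mathbb{P}^{1})$. There is no ``wild part concentrated at $x$'': the only Swan term is that of $H^{1}$ of the generic fibre, which vanishes for $\mathbb{P}^{1}$. And since $\cY_{k,\mathrm{red}}$ is an irreducible rational curve (Lemma~\ref{lem:quotfiberprops}), one has $\chi(\cY_{k})=2$, so $\Art_{\cY/\calo_{L}}=0$. The entire contribution of $\mathbf{sw}_{K'/L}$ to $\mu_{\cY,x}$ comes from the discrepancy term, not the Artin conductor. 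Your plan to extract $\mathbf{sw}_{K'/L}$ from the Artin conductor via Grothendieck--Ogg--Shafarevich would, if carried out, lead to a double count or an incorrect coefficient. The weak wild hypothesis is not used in this step at all; it was already spent in producing the global $\mathbb{P}^{1}$-model $\cY$ via Obus--Wewers.
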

See \ref{subsec:ArtinSwan} and \ref{subsec-curvedata} for notation concerning the Swan conductor of an $\ell$-adic representation. Concerning the base change conductor, applying Proposition \ref{prop:introcomputation} we find the following description:

\begin{theorem}
Assume that $C$ has potential good reduction and that $\cX$ has only weak wild quotient singularities. Then    
$$ c_{\tame} = \frac{u}{2} \quad \hbox{ and }\quad c_{\wild}= \frac{1}{4} \mathrm{Sw}(C). $$

\end{theorem}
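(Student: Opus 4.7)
The plan is to evaluate Proposition \ref{prop:introcomputation}(2) on the quotient model $\cX = \cX'/G$, with $G = \Gal(K'/K)$ and $\pi : \cX' \to \cX$ the quotient map. The first task is to verify that $\cX$ is normal, $\mathbb{Q}$-Gorenstein, and has only rational singularities: all three follow from the weak wild quotient singularity assumption and the Obus--Wewers local description of each singular point as formally isomorphic to the unique singularity of $\cY = \cZ/H$ with $\cZ$ smooth and $H \subseteq G$ (rationality because these are quotient singularities, $\mathbb{Q}$-Gorensteinness because $H$ acts on a smooth surface). The proposition then gives
$$-12\, c(J) = \frac{2}{e} \Gamma \cdot \pi^*\omega_{\cX/S} + \mathrm{Art}_{\cX/S} - \mu_\cX,$$
and the problem reduces to computing each of these three terms.

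For the Milnor term, apply the theorem on Milnor numbers of weak wild quotient singularities at each singular point $x$ of $\cX$ and sum:
$$\mu_\cX = \sum_x 4\left( 1 - \frac{1}{|H_x|} + \frac{\mathbf{sw}_{K'/L_x}}{|H_x|} \right).$$
For the Artin conductor, I would use its defining expression in terms of the Euler characteristic drop from generic to special fiber, corrected by $\mathrm{Sw}(C)$, and compute $\chi(\cX_{\bar k})$ from the $G$-action on the smooth curve $\cX'_{\bar k}$ via a Lefschetz/Riemann--Hurwitz count. For the discrepancy intersection, I would exploit that $\pi$ is \'etale away from the singular locus of $\cX$, so $\Gamma$ is supported over the weak wild quotient singularities and the intersection $\Gamma \cdot \pi^*\omega_{\cX/S}$ localises at each such point; each local contribution is computed by passing to the smooth cover $\cZ \to \cY$ and applying the projection formula. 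Each of the three terms decomposes into a \emph{topological} piece (depending only on $|H_x|$ and the combinatorics of the $G$-action) and a \emph{Swan-type} piece proportional to $\mathbf{sw}_{K'/L_x}$.

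To separate $c(J) = c_{\tame}(J) + c_{\wild}(J)$ I would invoke the characterisation of the tame part as the sum of Edixhoven's jumps from \cite{logjumps}: under potential good reduction via a purely wild extension, the tame inertia acts trivially on $H^1_{\et}(C_{\bar K}, \Q_\ell)$, which forces the jumps into $\{0,1\}$ with sum equal to the unipotent rank $u$, yielding $c_{\tame}(J) = u/2$. The identity $c_{\wild}(J) = \tfrac{1}{4}\mathrm{Sw}(C)$ then amounts to checking that, after combining the three terms computed above, the Swan-type contributions assemble into $-3\,\mathrm{Sw}(C)$ while the topological contributions reassemble into $-6u$. The main obstacle lies precisely in this final assembly: identifying the global $\mathrm{Sw}(C)$ as the correct sum of local Swan conductors $\mathbf{sw}_{K'/L_x}$ (weighted by the discrepancy and Milnor coefficients), and simultaneously showing that the residual combinatorial expression in the $|H_x|$ and in the $G$-action on $\cX'_{\bar k}$ collapses exactly to $-6u$.
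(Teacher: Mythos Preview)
Your overall strategy matches the paper's, but two steps need correction. First, the discrepancy $\Gamma = \omega_{\cX'/S'} - \pi^*\omega_{\cX/S}$ is \emph{not} supported over the singular locus of $\cX$: although $\pi$ is indeed \'etale over the regular locus, one has $\Gamma = \omega_{\cX'/\cX} - (\text{pullback of }\omega_{S'/S})$, and the second term contributes $-(e-1+\mathbf{sw})\cX'_k$ everywhere along the special fibre. In fact $\Gamma = (1-e-\mathbf{sw})\cX'_k$ is a multiple of the full special fibre (Corollary \ref{cor:discdiv}), so $\tfrac{2}{e}\,\Gamma\cdot\pi^*\omega_{\cX/S}$ is the purely global quantity $2\chi(\cX'_k)\bigl(1-\tfrac{1}{e}+\tfrac{\mathbf{sw}}{e}\bigr)$, not a sum of local terms at the singular points. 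Second, your argument for $c_{\tame}(J) = u/2$ is not valid: Edixhoven's jumps lie in $[0,1)$, and triviality of the tame inertia action on cohomology does not constrain them, since they are defined through the N\'eron-model filtration under tame base change rather than through monodromy. The paper proves $c_{\tame}=u/2$ by a different route (Theorem \ref{thm:wildquotctame}): it establishes the combinatorial identity $R(\cY')=E(\cY')$ on the minimal resolution by reducing each weak wild singularity to its Obus--Wewers $\mathbb{P}^1$-model, where the identity holds trivially and is stable under blow-up.

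You do correctly identify the crux: after substituting all three terms and applying Riemann--Hurwitz twice, one must show that the weighted local Swan conductors $\mathbf{sw}_{K'/L_x}/|H_x|$ assemble into the global $\mathrm{Sw}(C)$. The paper isolates this as a standalone identity (Proposition \ref{Prop:KeyIden}) and proves it using Bloch's localized Chern classes: on a common regular model $\mathscr{Z}'$ dominating both the smooth model and the resolved quotient $\mathscr{Y}'$, two Whitney decompositions of $c_2(\Omega^1_{\mathscr{Z}'/S})$---one through $\Omega^1_{S'/S}$, one through $\Omega^1_{\mathscr{Y}'/S}$---are compared via Bloch's formula $c_2 = -\mathrm{Art}$, and the local-to-global passage is carried out by matching formal completions at each singularity with the corresponding $\mathbb{P}^1$-model.
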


The condition in the statement of the theorem holds for instance when the special fiber $\cX'_k$ of the smooth model is an ordinary curve. The potential ordinary case, for abelian varieties in general, has been studied in depth by Chai and Kappen using sophisticated methods from rigid analytic geometry \cite{ChaiKappen}. In particular, they found a representation theoretic formula for the base change conductor of an abelian variety with potential ordinary reduction.

\subsection{}

The article is structured as follows. In Section 2 we include preliminaries and set notation for the rest of the article. In Section 3 we define the base change conductor and list a few of its fundamental properties. In Section 4 we introduce and study some local intersection theory on models of curves, including Milnor numbers and localized Chern classes. In Section 5 we recall and extend formulations of Deligne's Riemann--Roch theorem. In Section 6 we apply this theorem to provide formulas for the tame part of the base change conductor, using an explicit description of semi-stable reduction for curves with tame reduction. In Section 7 we assume that $C$ has potential good reduction, and establish, for use later in the article, several results on the Galois-quotient of the smooth model. In Section 8, we compute the base change conductor for a curve with tame potential good reduction. The last three sections are devoted to the wild case. In Section 9, we give some preliminaries on weak wild quotient singularities, and we compute the tame part of the base change conductor when these are the only singularities on the Galois-quotient. In Section 10 we compute explicitly the Milnor number of a weak wild quotient singularity. In Section 11, we compute the wild part of the base change conductor again assuming only weak wild quotient singularities occur.

\subsection{} The base change conductor can be computed in terms of quotients in general, also for semistable reduction admitting nodes, as follows from  (cf. Corollary \ref{cor:basechconformula} \eqref{item2:corbasechconformula}). Even though we decided in this paper to focus on the case of potential good reduction, most of the technical ingredients are available either directly, or after suitable modification. This is especially evident for those of local nature, such as singularity and local intersection invariants. In future work, we plan to perform a systematic study of the base change conductor for Jacobians more generally, as well as to consider various related questions.

\section{Preliminaries}
\subsection{Discretely valued fields}

\subsubsection{}
Throughout this paper, we denote by $K$ a complete discretely valued field  with ring of integers $R$ and residue field $k$. We assume that $k$ is algebraically closed with characteristic $p \geq 0$. By $\ell$ we shall always mean a prime number different from $p$. We fix a separable closure $K^s$ of $K$.

\subsubsection{} For a scheme $X$ over $K$, we write $X_{K^s}= X \times_K K^{s}.$ Similarily, for a scheme $\mathscr{X}$ over $R$, we denote by $\mathscr{X}_k $ the special fiber $\mathscr{X} \times_R k $ and by $ \mathscr{X}_K $ the generic fiber $ \mathscr{X} \times_R K $.

\subsubsection{}
Let $K'/K$ be a finite Galois extension of degree $ e = [K' \colon K]$ and denote by $R'$ the integral closure of $R$ in $K'$. Then $R'$ is again a complete discrete valuation ring, and the extension $ R \subset R' $ is ramified of index $e$ and induces an isomorphism of residue fields. 

We recall that $G = \mathrm{Gal}(K'/K)$ has the lower ramification filtration
$$ G = G_0 \supseteq G_1 \supseteq \ldots \supseteq G_i \supseteq \ldots       $$
by normal subgroups $G_i$ of $G$. The group $G_1$ is called the \emph{wild ramification subgroup} of $G$. It is a $p$-group and $G/G_1$ has cardinality prime to $p$. The group $G_i$, $i\geq 1$, is called the $i$-th ramification group.

\subsection{Artin conductor and Swan conductor}\label{subsec:ArtinSwan}

\subsubsection{} Let $V$ be a finite dimensional $\ell$-adic representation of $G$. The \emph{Swan conductor} of $V$ is defined as the sum
$$ \mathrm{Sw}(V) = \sum_{i\geq 1} \frac{1}{[G \colon G_i]} \mathrm{dim}(V/V^{G_i}).$$
The \emph{Artin conductor} of $V$ is defined as
$$ \mathrm{Art}(V) = \mathrm{dim}(V) - \mathrm{dim}(V^G) + \mathrm{Sw}(V). $$

\subsubsection{}
Let $V$ be a continuous, finite dimensional and quasi-unipotent $\ell$-adic representation of $\mathrm{Gal}(K^s/K)$. Then the action on the semi-simplification $V^{ss}$ factors through $G = \mathrm{Gal}(K'/K)$ for some finite Galois extension $K'/K$, and we set $\mathrm{Art}(V) = \mathrm{Art}(\mathrm{V^{ss}}) $ and $\mathrm{Sw}(V) = \mathrm{Sw}(\mathrm{V^{ss}}) $, respectively.

\subsubsection{} If $V$ is the representation defined by the finite field extension $K'/K$, we write for simplicity
$$ \mathrm{Sw}(V) = \mathbf{sw}_{K'/K}. $$
Simplifying further, we will frequently write $ \mathbf{sw} $ for this term if no confusion can occur.

We record the following well known description of the canonical sheaf of the finite morphism $S' = \mathrm{Spec}(R') \to S = \mathrm{Spec}(R)$, which reduces to the conductor-discriminant formula in algebraic number theory:

\begin{prop}\label{prop:canonicalofextension}
Let $s'$ be the closed point of $S'$. Then
$$ \omega_{S'/S} = \mathcal{O}_{S'}((e - 1 +  \mathbf{sw}) s'). $$
 \end{prop}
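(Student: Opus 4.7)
The plan is to relate both sides to the different $\mathfrak{d}_{R'/R}$ of the extension, via the standard identification of the relative dualizing sheaf with the inverse different and the conductor-discriminant formula.

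First, since $S' \to S$ is a finite flat morphism of Dedekind schemes, the trace pairing induces a canonical isomorphism
\[
\omega_{S'/S} \;\cong\; \mathrm{Hom}_R(R', R) \;\cong\; \mathfrak{d}_{R'/R}^{-1}
\]
of $R'$-modules. Setting $d = v_{R'}(\mathfrak{d}_{R'/R})$, the inverse different is the fractional ideal $\mathfrak{m}_{R'}^{-d}$, so that $\omega_{S'/S} \cong \mathcal{O}_{S'}(d \cdot s')$ as invertible sheaves. It therefore suffices to establish the identity $d = (e - 1) + \mathbf{sw}_{K'/K}$.

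For this, I would invoke the conductor-discriminant formula, which identifies $d$ with the Artin conductor of the regular representation $V = \mathbb{Q}_\ell[G]$ of $G = \mathrm{Gal}(K'/K)$ attached to the extension. Since $\dim V = |G| = e$ and $V^G$ is one-dimensional, the definition of the Artin conductor recalled in \ref{subsec:ArtinSwan} gives
\[
\mathrm{Art}(V) = (\dim V - \dim V^G) + \mathrm{Sw}(V) = (e - 1) + \mathbf{sw}_{K'/K},
\]
and combining with the previous step yields the claim. Alternatively, one can bypass the conductor-discriminant formula by using Hilbert's classical formula $d = \sum_{i \geq 0}(|G_i| - 1)$ together with the direct computation $\mathrm{Sw}(\mathbb{Q}_\ell[G]) = \sum_{i \geq 1}(|G_i| - 1)$, which follows from $\dim \mathbb{Q}_\ell[G]^{G_i} = [G:G_i]$ and cancellation of the weight $1/[G:G_i]$ in the defining sum.

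Both ingredients are classical, so there is no genuine obstacle beyond a careful matching of conventions: notably that $\omega_{S'/S}$ is identified with the \emph{inverse} different (so the corresponding divisor is effective of positive degree), and that the "representation defined by $K'/K$" in \ref{subsec:ArtinSwan} is to be read as the regular representation of $G$ rather than, say, its augmentation quotient. These sign and convention checks are the only subtle points in the argument.
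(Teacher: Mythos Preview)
Your argument is correct and matches exactly what the paper indicates: the proposition is stated there without proof, accompanied only by the remark that it ``reduces to the conductor-discriminant formula in algebraic number theory,'' which is precisely the route you take (identify $\omega_{S'/S}$ with the inverse different, then apply Hilbert's formula or the conductor-discriminant formula to get $d = (e-1) + \mathbf{sw}$). Your alternative computation via $\mathrm{Sw}(\mathbb{Q}_\ell[G]) = \sum_{i\geq 1}(|G_i|-1)$ and Hilbert's formula $d = \sum_{i\geq 0}(|G_i|-1)$ is also correct and perhaps the cleanest way to see it directly.
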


\subsection{Models of curves and abelian varieties}\label{subsec-curvedata}

\subsubsection{} We let $C$ denote a smooth projective and geometrically irreducible curve over $K$, of genus $g>0$. We shall assume that $C$ has index one, i.e., that there exists a zero cycle of degree $1$. 

A model of $C$ is a flat $R$-scheme $\cX$, endowed with an isomorphism of $K$-schemes
$$ \cX \times_R K \cong C. $$
Note that $ \cX $ is automatically integral. Unless otherwise mentioned, the scheme $ \cX $ will be assumed proper over $S$. In these cases, we define the Artin conductor of the model as the difference of $\ell$-adic Euler characteristics modified with the Swan conductor: 
\begin{equation}\label{def:Artinconductor}
    \Art_{\cX/S} = \chi(C_{K^s})-\chi(\cX_k) -  \operatorname{Sw} H^1(C_{K^s}, \mathbb{Q}_\ell).
\end{equation}
This is related to, but not the same thing as, the Artin conductor, defined in the previous section, of the virtual Galois representation $\sum (-1)^i H^i(C_{K^s}, \mathbb{Q}_\ell)$ . 
\subsubsection{}\label{subsubsec:sncdmodel}
We say that $\cX$ is an $sncd$-model if $\cX$ is regular and if the special fiber $ \cX_k = \sum_{i \in I} n_i E_i $ is a divisor with strict normal crossings on $\cX$. This is equivalent to demanding that $E_i$ is smooth for all $i$ and that $ E_i $ intersects $E_j $ transversally whenever $ i \neq j $.

\subsubsection{}  We denote by $ \Gamma(\cX) $ the dual graph of the semi-stable curve $ \cX_{k,\mathrm{red}} $. The vertex set $ \{v_i\}_{ i \in I} $ corresponds bijectively to the set $ \{E_i\}_{ i \in I} $ of irreducible components of $ \cX_k $, and whenever $ i \neq j $ there are $ \vert E_i \cap E_j \vert $ distinct edges joining $v_i$ and $v_j$.

We put $ V(\cX) = \vert I \vert $ and $ E(\cX) = \vert \mathrm{Sing} (\cX_{k,\mathrm{red}}) \vert $. Then the first Betti number of $ \Gamma(\cX) $ can be computed as
\begin{equation} \label{eq:betti1}b_1(\Gamma(\cX)) = 1 - V(\cX) + E(\cX).
\end{equation}

To each irreducible component $E_i$ one can associate the numerical data $ (n_i,g(E_i))$, where $ g(E_i) $ denotes the genus of $E_i$. By the labelled dual graph $ \widetilde{\Gamma}(\cX) $, we shall mean the data $ (\Gamma(\cX), \{(g(E_i),n_i)\}_{i \in I}) $.

\subsubsection{} 
Assume that $C$ acquires semi-stable reduction over $K'/K$. If $V_r$ denotes the semi-simplification of the $ \mathrm{Gal}(K^s/K)$-representation 
$$ \mathrm{H}^r(C_{K^s}, \mathbb{Q}_{\ell}), $$
we can also view $V_r$ as a $G$-representation. Then $ \mathrm{Sw}(V_r) $ is zero unless $r=1$, in which case we set
$$ \mathrm{Sw}(C) = \mathrm{Sw}(V_1). $$

\subsubsection{}
Assume that $C$ has genus $ g > 1 $, or is an elliptic curve. In \cite{saito-VC}, Saito proved the following cohomological criterion: $C$ has semi-stable reduction over the valuation ring $R$ of $K$ if and only if $ \mathrm{Gal}(K^s/K)$ acts unipotently on $ \mathrm{H}^1(C_{K^s}, \mathbb{Q}_{\ell}) $. Saito also proved in \emph{loc.~cit.} a tameness criterion, which can be formulated as follows: $C$ acquires semi-stable reduction over a tame extension of $K$ if and only if $\mathrm{Sw}(C) = 0 $.

\subsubsection{} Let $A$ be a semi-abelian variety over $K$ and denote by $\mathscr{A}$ its N\'eron lft-model over $R$. We shall write $\mathscr{A}^{\circ}$ for the identity component of $\mathscr{A}$. 

The identity component $\mathscr{A}_k^{\circ}$ of the special fiber is canonically an extension of an abelian variety $\mathscr{B}_k$ by the product of a torus $\mathscr{T}_k$ and a unipotent group $\mathscr{U}_k$. We define the abelian rank $a$, the toric rank $t$ and the unipotent rank of $u$ of the abelian variety $A$ to be the dimensions of the $k$-groups $\mathscr{B}_k$, $\mathscr{T}_k$ and $\mathscr{U}_k$, respectively.

\subsubsection{} Let $C$ be a smooth projective geometrically integral curve of index $1$ and let $\cX$ be a regular model of $C$. Then the N\'eron model $ \mathscr{J} $ of $J = \mathrm{Jac}(C)$ is related to $\cX$ via a natural isomorphism
$$ \mathrm{Pic}^0_{\cX/R} \cong \mathscr{J}^{\circ} $$
(cf.~\cite[9.5.4]{neron}).

\section{Definitions and basic properties }
 In this section, we recall the definition of the base change conductor and its basic properties, with a particular emphasis on the case of Jacobians.

\subsection{ Preliminaries on models }\label{subsub:comparelattices}
Let $L$ be a line bundle on a scheme $X$ over $K$. If $\mathscr{X}$ is a model of $X$ over $R$ and $\cL$ and $\cL'$ are line bundles on $\mathscr{X}$ which are isomorphic to $L$ on $X$, we can canonically write $\cL' \simeq \cL + \mathcal{O}(D)$ for a Cartier divisor $D$ supported on the special fiber. This extends to $\Q$-line bundles, and in either case we additively write that:
\begin{displaymath}
    \cL' - \cL = D \quad \hbox{ or }\quad \cL' = \cL + D
\end{displaymath}
This writing implicitly includes identifications of $\cL|_X$ and $\cL'|_X$ with $L$, but is omitted from notation. 

In the case that $\mathscr{X} = S$, any divisor is a multiple of the special point, and we simply write the multiple and omit writing the special point.

\subsection{The base change conductor}\label{subsec:basechangedef}
\subsubsection{}
Let $A$ be an abelian $K$-variety of dimension $g$ and let $\mathscr{A}$ denote its N\'eron model over $R$. Let moreover $K'/K$ be a finite separable field extension of ramification index $ e(K'/K) $. We let $\mathscr{A}'$ denote the N\'eron model of $ A \times_K K' $ over $R'$, the integral closure of $R$ in $K'$. Since $\mathscr{A}'$ is a N\'eron model, there exists a unique morphism
$$ h \colon \mathscr{A} \times_R R' \to \mathscr{A}' $$
extending the canonical isomorphism of the generic fibers. 

The base change map $h$ yields a canonical map
$$ \Omega^1_{\mathscr{A}'/R'} \to \Omega^1_{\mathscr{A}/R} \otimes_R R'. $$
Pulling back along the unit section $ e_{\mathscr{A}'}$ of $ \mathscr{A}' $, one obtains an injective homomorphism
\begin{equation}\label{eq:kappabasechange}
\kappa : \omega_{\mathscr{A}'/R'} \to \omega_{\mathscr{A}/R} \otimes_R R'
\end{equation}
of free $R'$-modules of rank $g$. We consider the following definition (\cite[2.4]{chai}):

\begin{definition} \label{Def:basechange}
Assume that $A \times_K K'$ has semi-abelian reduction over $R'$. Then we call the rational number
$$ c(A) = \frac{1}{e(K'/K)} \cdot \mathrm{length}_{R'} (\mathrm{coker}(\kappa)). $$
the \emph{base change conductor} of $A$.

\end{definition}

The definition of $c(A)$ is independent of choice of extension $K'/K$ over which $A$ has semi-abelian reduction. The most important property of $c(A)$ is that it vanishes if and only if $A$ has semi-abelian reduction over $R$ (see e.g. \cite[Prop.~4.16]{HaNi}).

\subsubsection{} 

In \cite[Ch.~6]{HaNi-book}, the base change conductor was refined as a sum of a tame and a wild part
$$ c(A) = c_{\tame}(A) + c_{\wild}(A). $$
Both terms are non-negative, and $c_{\wild}(A) $ is zero if $A$ is tamely ramified. 

If $A$ is a Jacobian, more is known  (see \cite{logjumps}). In particular, the tame part $c_{\tame}(A)$ is a rational number, and depends only on the labelled dual graph $ \widetilde{\Gamma}(\cX) $ of the minimal sncd-model $\cX$. Even in this case however, it is not known in general whether $c_{\wild}(A) = 0$ implies that $A$ is tamely ramified. It is true in every case where $c_{\wild}(A) $ has been computed.

\subsection{The Jacobian case}\label{subsec:Jacobianbcc-def} 

\subsubsection{} Let
$$ f \colon \cX \to S = \Spec(R)$$
be a regular model of $C$ with relative dualizing sheaf $\omega_{\cX/S}$. Let $ e_{\mathscr{J}} \colon S \to \mathscr{J} $ be the unit section of the N\'eron model of $J = \mathrm{Jac}(C)$. Recall that the module of invariant differentials
$$ \omega_{\mathscr{J}/S} := e_{\mathscr{J}}^* \Omega^1_{\mathscr{J}/S} $$
is a locally free sheaf on $S$ of rank equal to $g$, the relative dimension of $ \mathscr{J}/S $.

We recall the following useful identification (\cite[Prop.~2.4.4]{logjumps}).

\begin{prop}\label{prop:isocohomologyLiealgebra}
There is an $\mathcal{O}_S$-module isomorphism
$$ \alpha_{\cX} \colon \omega_{\mathscr{J}/S} \to f_* \omega_{\cX/S}. $$
\end{prop}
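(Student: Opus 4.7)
The plan is to chain together three canonical identifications, routing through the relative Picard scheme and Grothendieck–Serre duality.

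First, I would use the identification recalled at the end of the preliminaries: since $\cX$ is a regular proper model of a smooth geometrically integral curve $C$ of index one, there is a canonical isomorphism $\mathrm{Pic}^0_{\cX/S} \cong \mathscr{J}^{\circ}$ (cited as \cite[9.5.4]{neron}). Because the unit section of $\mathscr{J}$ factors through the identity component, the invariant differentials only see $\mathscr{J}^{\circ}$, yielding a canonical identification $\omega_{\mathscr{J}/S} \cong \omega_{\mathrm{Pic}^0_{\cX/S}/S}$.

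Second, I would invoke the standard deformation-theoretic description of the Lie algebra of the relative Picard functor: for a flat proper morphism $f \colon \cX \to S$ with $H^0(\cX_s, \mathcal{O}_{\cX_s}) = k(s)$ on fibers, one has a canonical $\mathcal{O}_S$-linear isomorphism
$$ \mathrm{Lie}(\mathrm{Pic}^0_{\cX/S}) \cong R^1 f_* \mathcal{O}_\cX. $$
Dualizing gives $\omega_{\mathrm{Pic}^0_{\cX/S}/S} \cong (R^1 f_* \mathcal{O}_\cX)^{\vee}$.

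Third, I would apply relative Grothendieck–Serre duality to the flat proper morphism $f$ of relative dimension one (whose fibers are Gorenstein curves of arithmetic genus $g$, since $\cX$ is regular so the fibers are local complete intersections). Cohomology and base change, combined with $h^0(\mathcal{O}_{\cX_s}) = 1$ and $h^1(\mathcal{O}_{\cX_s}) = g$, ensures that both $R^1 f_* \mathcal{O}_\cX$ and $f_* \omega_{\cX/S}$ are locally free of rank $g$. The relative trace pairing
$$ f_* \omega_{\cX/S} \otimes_{\mathcal{O}_S} R^1 f_* \mathcal{O}_\cX \longrightarrow \mathcal{O}_S $$
is perfect, producing a canonical isomorphism $(R^1 f_* \mathcal{O}_\cX)^{\vee} \cong f_* \omega_{\cX/S}$. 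Composing the three isomorphisms defines $\alpha_\cX$.

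No serious obstacle is expected, as each ingredient is classical once $\cX$ is known to be regular and proper with $C$ of index one; the only real care needed is to keep track of the canonical identifications so that $\alpha_\cX$ is intrinsic and compatible with base change in $S$. Alternatively, one could simply cite \cite[Prop.~2.4.4]{logjumps} where the precise normalisation is fixed.
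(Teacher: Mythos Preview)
Your proposal is correct and is precisely the standard argument one would give. The paper itself provides no proof here: it simply recalls the statement with a citation to \cite[Prop.~2.4.4]{logjumps}, which is exactly the alternative you mention at the end. So there is nothing to compare against; your three-step chain (identification of $\mathscr{J}^{\circ}$ with $\mathrm{Pic}^0_{\cX/S}$, the deformation-theoretic description of the Lie algebra, and Grothendieck--Serre duality) is the expected content behind that citation.
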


\subsubsection{}\label{subsubsec:Jacobiandef-bcc}
Lastly, assume that $C$ acquires semi-stable reduction after a finite separable extension $K'/K$ of ramification index $e$. Then, for any regular model $\cX'$ of $C_{K'}$ over the ring of integers $R'$, Proposition \ref{prop:isocohomologyLiealgebra} allows us to identify the map $\kappa$ in (\ref{eq:kappabasechange}), with the natural map
 $$  H^0(\cX' , \omega_{\cX'/S'}) \to H^0(\cX , \omega_{\cX/S}) \otimes_R R'. $$

By elementary lattice theory, the determinants of the two Hodge bundles above differ by $\pi^n$, where $ n = \hbox{length}_{R'} \left(\coker (\kappa)\right)$. For our purposes, this gives the most convenient way to compute the base change conductor from the input of models of curves.

\section{Some local intersection theory}

\subsection{Relative intersection theory}\label{subsec:relativeinttheory}

In this section we recall and study the  intersection theory relevant for this article. We remark that that while several results are valid in greater generality, we only announce them in the setting described in the preliminaries.

If $\cL, \cM$ are line bundles on $f: \cX \to S$, there is a canonically defined line bundle over $S$, denoted by $\langle \cL, \cM \rangle$. It is a bimultiplicative functor in line bundles on $\cX$ and commutes with base changes $S' \to S$. We refer the reader to \cite[\textsection 6-7]{detcoh} for a treatment in this generality. In this reference, one approach to these constructions is via determinants of the cohomology: 
\begin{equation}\label{def:deligneproduct}
\langle \cL, \cM \rangle = \det Rf_\ast\left(( (\cL-\mathcal{O}_{\cX}) \otimes (\cM-\mathcal{O}_{\cX}) \right) 
\end{equation}
We refer to the original article for the formalism of virtual bundles, which plays little role here except for in the proof of Proposition \ref{prop:relationtointtheory} below. Since the derived pushforward of a proper fppf morphism sends perfect complexes to perfect complexes \cite[\href{https://stacks.math.columbia.edu/tag/0B91}{0B91}]{stacks-project}, one can apply the formalism of determinants of such complexes, as developed in \cite{KMdeterminant}.

\subsubsection{}

Suppose $\cX \to S$ is a model of a curve $C \to \Spec K$, and $D$ is a Cartier divisor supported on the special fiber, and $[D]$ denotes its fundamental cycle.  There is a well-defined intersection product $D \cdot \cL := \deg \cL|_{D} \in \mathbb{Z}$ with line bundles $\cL$ on $\cX.$  

Moreover, if $\cL = O(D')$ for a Cartier divisor $D'$, supported over the special fiber, one has an equality $D \cdot \mathcal{O}(D') = D' \cdot \mathcal{O}(D),$ and we denote the symmetric product by $D \cdot D'$. The reference \cite{DelSGA7} supposes that $\cX$ is regular to identify Weil divisors and Cartier divisors, but the construction extends to Cartier divisors or even $\Q$-Cartier divisors. Given this, let us recall how these intersection numbers relate to the pairings of line bundles. 

\begin{prop}\label{prop:relationtointtheory}

    Let $\cX \to S$ be a model of $C \to \Spec K$, and suppose we are given two $\mathbb{Q}$-line bundles $\cL, \cM$,  isomorphic to line bundle $L,M$ over $C.$  Then :
    \begin{enumerate}
        \item there is a well-defined  $\Q$-line bundle $\langle \cL, \cM \rangle$, isomorphic to $\langle L, M \rangle$ over $\Spec K$. 
        \item \label{item2:telationtointtheory} If $\cL' = \cL + D$  for a  $\Q$-Cartier divisor $D$ supported on the special fiber, then 
        \begin{displaymath}
            \langle \cL', \cM \rangle = \langle \cL, \cM \rangle + (D \cdot \cM),
        \end{displaymath}
        and symmetrically in the second variable. 
    \end{enumerate}
\end{prop}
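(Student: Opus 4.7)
\emph{Plan.} For part (1), I extend the Deligne pairing $\Q$-linearly: given $\cL, \cM \in \mathrm{Pic}(\cX)\otimes_{\Z} \Q$, choose $N \geq 1$ so that $N\cL$ and $N\cM$ lift to genuine line bundles on $\cX$, and set
$$ \langle \cL, \cM\rangle \;:=\; \frac{1}{N^2}\,\langle N\cL, N\cM\rangle \;\in\; \mathrm{Pic}(S)\otimes_{\Z} \Q. $$
Bimultiplicativity of the classical pairing makes this independent of $N$ and of the chosen integral lifts, while compatibility with restriction to $\Spec K$ is inherited from the classical case and produces the identification with $\langle L, M\rangle$ on the generic fiber.

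For part (2), by the $\Q$-linearity just established it is enough to treat the case where $\cL' = \cL \otimes \mathcal{O}(E)$ with $E$ a prime component of the special fiber and $\cM$ a genuine line bundle. Bimultiplicativity in the first variable then reduces the claim to proving
$$ \langle \mathcal{O}(E), \cM \rangle \;\cong\; \mathcal{O}_S(E\cdot \cM), $$
where the isomorphism is compatible with the canonical trivialization of $\mathcal{O}(E)$ over the generic fiber $C$ (trivial since $E$ is vertical).

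To compute $\langle \mathcal{O}(E), \cM\rangle$, I would apply formula \eqref{def:deligneproduct} together with the short exact sequences
$$ 0 \to \mathcal{O}_{\cX} \to \mathcal{O}_{\cX}(E) \to \mathcal{O}_E(E) \to 0, \qquad 0 \to \cM \to \cM(E) \to \cM|_E \otimes \mathcal{O}_E(E) \to 0, $$
and the additivity of $\det Rf_*$ in short exact sequences. After cancellation the four-term virtual expression for $\langle \mathcal{O}(E), \cM\rangle$ collapses to
$$ \det Rf_*\bigl(\cM|_E \otimes \mathcal{O}_E(E)\bigr) \;\otimes\; \det Rf_*\bigl(\mathcal{O}_E(E)\bigr)^{-1}. $$
Both factors are determinants of perfect complexes of $R$-modules with torsion cohomology; the canonical generic trivialization of such a determinant identifies it with $\mathcal{O}_S(n)$ for an integer $n$ equal, up to a uniform sign, to the Euler characteristic of the corresponding coherent sheaf on $E$. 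Taking the difference and applying Riemann--Roch on $E$ gives
$$ \chi\bigl(E,\,\cM|_E \otimes \mathcal{O}_E(E)\bigr) - \chi\bigl(E,\,\mathcal{O}_E(E)\bigr) \;=\; \deg_E\bigl(\cM|_E\bigr) \;=\; E\cdot \cM, $$
which is the required identity; symmetry in the second variable is built into the Deligne pairing.

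The main obstacle is the bookkeeping of generic trivializations: both of the additive identities $\cL' = \cL+D$ and $\langle \cL', \cM\rangle = \langle \cL, \cM\rangle + (D\cdot \cM)$ implicitly depend on compatible isomorphisms of line bundles on the generic fiber, and one must verify that the sign conventions implicit in the Knudsen--Mumford determinant of a perfect complex with torsion cohomology produce the intersection pairing with the correct sign rather than its negative.
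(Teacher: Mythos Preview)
Your approach is essentially the same as the paper's: both use the determinant-of-cohomology definition \eqref{def:deligneproduct}, reduce by bilinearity, apply the short exact sequence $0 \to \mathcal{O}_{\cX} \to \mathcal{O}_{\cX}(D) \to \mathcal{O}_D(D) \to 0$, and finish with Riemann--Roch on the vertical divisor. Part (1) and the final Euler-characteristic computation are fine.

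There is, however, a genuine gap in your reduction for part (2). You reduce to the case $\cL' = \cL \otimes \mathcal{O}(E)$ with $E$ a \emph{prime component} of the special fiber. But the proposition is stated for an arbitrary model $\cX$, which is only assumed integral; in particular $\cX$ need not be regular or $\Q$-factorial, so a prime component $E$ of $\cX_k$ is a Weil divisor that need not be Cartier (or even $\Q$-Cartier). Thus $\mathcal{O}(E)$ may not exist as a line bundle, and your linearity argument does not apply: the $\Q$-vector space of $\Q$-Cartier divisors supported on $\cX_k$ is not in general spanned by prime components.

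The fix is to imitate the paper's reduction instead. First clear denominators to reduce to $D$ an honest Cartier divisor supported on $\cX_k$. Then observe that $\cX_k$ itself is principal (cut out by a uniformizer), hence Cartier and effective; proving the formula first for $D = \cX_k$ allows you to write any vertical Cartier divisor as $(D + n\cX_k) - n\cX_k$ with $D + n\cX_k$ effective. Your short-exact-sequence argument then goes through verbatim with $E$ replaced by an arbitrary effective Cartier divisor $D$ supported on the special fiber. Your concern about the sign convention in the Knudsen--Mumford determinant is legitimate but orthogonal to this issue; the paper does not address it explicitly either.
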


\begin{proof}
    This is essentially \cite[Proposition 4.2]{Eri16}. Since the statement is slightly more general than what the reference states, we recall the proof for the convenience of the reader. 
    
    The construction underlying the first statement is clear from the bilinearity of the pairings. 
    
    For the second point, by bilinearity we reduce to the case when $\cL$ is trivial, and $\cL' = \mathcal{O}(D)$. Then the virtual bundle, already used in the definition \ref{def:deligneproduct},
    \begin{equation}\label{def:virtualobject}
        Rf_\ast\left( (\mathcal{O}(D)-\mathcal{O}_{\cX}) \otimes (\cM-\mathcal{O}_{\cX}) \right)
    \end{equation}
    is generically trivial, and its finite length as a virtual $R$-module corresponds the number in \eqref{item2:telationtointtheory}, claimed to be $(D \cdot \cM).$ 
    
    We first prove the claim for the case when  $D$ is the special fiber $\cX_k$. In this case, the standard sequence $0 \to \mathcal{O}_{\cX} \to \mathcal{O}_{\cX}(\cX_k) \to \mathcal{O}_{\cX_k}(\cX_k)\to 0$ shows that $\mathcal{O}_{\cX}(\cX_k)-\mathcal{O}_{\cX} \simeq \mathcal{O}_{\cX_k}(\cX_k)$. Then the length of \eqref{def:virtualobject} reduces to $$\chi(\cM|_{\cX_k}) - \chi(\mathcal{O}_{\cX_k}) = \cX_k \cdot \cM .$$%c_1(\cM|_{\cX_k}) \cap [\cX_k] .$$

     In the general case, by bilinearity one reduces to the case when  $D$ is an actual Cartier divisor. By the already treated case and bilinearity, we can suppose $D$ is effective. In this case, the rest of the argument is a repetition of the previous argument, instead relying on the equality $$\chi(\mathcal{M}|_D) - \chi(\mathcal{O}_D) = D \cdot \mathcal{M} .$$
\end{proof}

\subsection{Milnor numbers}\label{subsection:Milnor}

We continue to consider a model  $\cX \to S$ of a curve $C \to \Spec K$. We assume that $C$ is smooth, and that $\cX$ is normal and  $\mathbb{Q}$-Gorenstein. If $P\in \cX$ is a closed point, we consider an arbitrary resolution $\pi: \cX' \to \cX$. Denote by $E = \pi^{-1}(P)$ the exceptional divisor, by $\Gamma_{\cX',E}$ the dual graph of $E$, and by $g_P = \sum g_i$ the sum of the genera of the reduced part of the irreducible components of $E$. Let $V({\cX',E})$ be the number of irreducible components in $E$.  Finally, for any normal crossings resolution $(\cX', E) \to (\cX, P)$, define $b_1(\Gamma_{\cC',E})$ as its first Betti number (compare with \eqref{eq:betti1 }). The latter only depends on $(\cC, P)$, and we also denote it by $b_1(\Gamma_{\cC,P}).$ 

Also define $\Gamma_\pi$ as the discrepancy $\mathbb{Q}$-Cartier divisor of $\pi$. Finally, consider the coherent sheaf $R^1 \pi_{\ast}{\mathcal{O}_{\cX'}}$. It is an $\mathcal{O}_{\cX,P}$-module of finite length, and automatically then also an $R$-module of finite length, $p_{g,P}$.

\begin{definition}\label{def:Milnornumber}
    We define the Milnor number of $P$, in $\cX$, as:
    \begin{displaymath}
        \mu_{\cX,P} = 12 p_{g,P} + \Gamma_\pi^2 - \sum 2g_i - b_1(\Gamma_{\cC,P}) + V({\cX',E}),
    \end{displaymath}
    and the discrepancy number as 
    \begin{displaymath}
        \nu_{\cX, P} = \Gamma_\pi^2 - \sum 2g_i - b_1(\Gamma_{\cC,P}) + V({\cX',E}).
    \end{displaymath}
    
\end{definition}

The definitions are independent of the choice of the resolution $\pi$. Indeed, all expressions except  $\Gamma_\pi^2$ and $V({\cX',E})$ are independent of the resolution, and the expression $\Gamma_\pi^2 + V({\cX',E})$  is invariant under blowup in closed  regular points and hence of the choice of resolution.

\begin{lemma}\label{lemma:Milnordepend}
    The Milnor number and the discrepancy number only depend on the formal isomorphism type of $(\cX, P)$.
\end{lemma}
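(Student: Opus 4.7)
The plan is to verify that each ingredient of Definition \ref{def:Milnornumber} depends only on the formal completion $\widehat{\mathcal{O}}_{\cX,P}$; combined with the already-established independence of resolution, this yields the lemma for both $\mu_{\cX,P}$ and $\nu_{\cX,P}$.

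First I would shrink $\cX$ to an open neighborhood of $P$ in which $P$ is the only non-regular point, fix a normal crossings resolution $\pi : \cX' \to \cX$, and formally complete along the exceptional divisor $E = \pi^{-1}(P)$. This produces a proper birational morphism $\widehat{\pi} : \widehat{\cX'} \to \Spec \widehat{\mathcal{O}}_{\cX,P}$ whose exceptional fiber is $E$, with all of its infinitesimal neighborhoods unchanged. Consequently the irreducible components of $E$, their arithmetic genera $g_i$, and the dual graph $\Gamma_{\cX',E}$, and hence $V(\cX',E)$, $\sum 2g_i$, and $b_1(\Gamma_{\cX',E})$, are determined by $\widehat{\pi}$ alone.

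For $p_{g,P}$, faithfully flat base change along $\mathcal{O}_{\cX,P} \to \widehat{\mathcal{O}}_{\cX,P}$ yields an isomorphism
\[
(R^1\pi_*\mathcal{O}_{\cX'})_P \otimes_{\mathcal{O}_{\cX,P}} \widehat{\mathcal{O}}_{\cX,P} \;\cong\; R^1\widehat{\pi}_*\mathcal{O}_{\widehat{\cX'}}
\]
of finite-length modules, whose common length is $p_{g,P}$. For $\Gamma_\pi^2$, write $\Gamma_\pi = \sum_i a_i E_i$ with $a_i \in \mathbb{Q}$; the coefficients are determined by the adjunction system $(\Gamma_\pi + K_{\cX'})\cdot E_j = 0$, and both the intersection matrix $E_i \cdot E_j = \deg_{E_i}\mathcal{O}_{E_i}(E_j)$ and $K_{\cX'}\cdot E_j = 2g_j - 2 - E_j^2$ are computed in a first-order formal neighborhood of $E$, hence depend only on $\widehat{\pi}$.

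Combining these observations, every term in the definitions of $\mu_{\cX,P}$ and $\nu_{\cX,P}$ is extracted from the common formal completion. The main subtle point I anticipate is that for a second pointed model $(\cY,Q)$ with $\widehat{\mathcal{O}}_{\cY,Q} \cong \widehat{\mathcal{O}}_{\cX,P}$, its resolution $\rho : \cY' \to \cY$ may look quite different globally from $\pi$; but after formal completion both $\widehat{\pi}$ and $\widehat{\rho}$ are normal crossings formal resolutions of the common formal singularity, and the independence-of-resolution argument recalled after Definition \ref{def:Milnornumber} (reducing to invariance of $\Gamma_\pi^2 + V(\cX',E)$ under blowup at closed regular points) carries over verbatim to this formal setting, yielding equality of $\mu$ and $\nu$.
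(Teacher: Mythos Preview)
Your argument is correct and follows the same approach as the paper: base change a chosen resolution along the faithfully flat map $\mathcal{O}_{\cX,P}\to\widehat{\mathcal{O}}_{\cX,P}$, then observe that $p_{g,P}$, the intersection matrix, the adjunction data determining $\Gamma_\pi$, the genera $g_i$, and the dual graph are all read off from the exceptional fiber and its infinitesimal neighborhoods. The only slip is the sign in your adjunction system (it should read $(\Gamma_\pi - K_{\cX'})\cdot E_j = 0$, since $\pi^*\omega_{\cX/S}\cdot E_j=0$), but this is cosmetic and does not affect the logic.
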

\begin{proof}
    The proof is standard, and we include it because of its central importance in latter sections. Moreover, the proof for the Milnor numbers  also covers the discrepancy numbers, and we hence focus on former. 
    
    Denote by $A$ the local ring $\mathcal{O}_{\cX,P}$ and by $\widehat{A}$ the completion of $A$ along the maximal ideal $\mathfrak{m}$. The map $A \to \widehat{A}$ is faithfully flat. We first address that $p_{g,P}$ only depends on the corresponding formation above $\widehat{A}.$ 
    
    Given a resolution $\cX' \to \Spec A$, 
    the base change $\cX'\times_{A} \widehat{A}$ is a resolution of $\Spec \widehat{A}$ by \cite[\href{https://stacks.math.columbia.edu/tag/0BG6}{0BG6}]{stacks-project}. Since $A \to \widehat{A}$ was faithfully flat, we find that the formation of the sheaf $R^1 \pi_{\ast} \mathcal{O}_{\cX'}$ commutes with the flat base change. The fact that $p_{g,P}$ only depends on the completion $\widehat{A}$ now follows from the following statement: Suppose we are given an $A$-module $M$, of finite $R$-length. Then $M \otimes_A \widehat{A}$ is also an $R$-module of the same $R$-finite length. This follows from the fact that  $R\to A$ is a local homomorphism of local rings, with the same residue fields. This reduces the statement to the case when $M = A/\mathfrak{m}$. Since $A/\mathfrak{m} \otimes_A \widehat{A} \simeq A/\mathfrak{m}$ the statement follows. 
    
    For the invariance of the discrepancy under the base change to the completion, write $\Gamma_{\pi} = \sum k_i E_i$ for $k_i \in \mathbb{Q}$ . Since the intersection forms on both $\cX'$ and $\cX' \times_A \widehat{A}$ are negative definite, it is determined by the equation $\Gamma_\pi \cdot E_i= 2g_i - 2 - E_i^2$, and $E_i^2$ is computed via the intersections of $E_i$ and $E_j, j\neq i$ in $E$ as in \cite[Corollaire 1.8]{DelSGA7}. Since the exceptional divisor is the same in both cases we conclude that $\Gamma_\pi$ and hence $\Gamma_\pi^2$ is invariant upon completion. 
    
    Finally, since the exceptional divisors are in fact the same before and after the completion, hence so are the genera and Betti number of the dual graph.
\end{proof}

We have the following corollary which we include for future reference:
\begin{corollary}\label{cor:milnornumber} 
    If $P \in \cX$ is either a rational double point, or formally isomorphic to a quotient arising from a finite group acting on a regular two dimensional scheme, then $g_P = b_1(\Gamma_{\cC, P}) = 0$. 
    
    In particular for such singularities, if supposed moreover rational, we have:
\begin{displaymath}
        \mu_{\cX, P} = \nu_{\cX, P} = \Gamma_\pi^2 + V({\cX',E}).
\end{displaymath}
\end{corollary}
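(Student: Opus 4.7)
The plan is to reduce to the formal completion via Lemma \ref{lemma:Milnordepend}, analyze the minimal resolution in each of the two singularity types, and finally invoke rationality to collapse $p_{g,P}$. By Lemma \ref{lemma:Milnordepend}, both $\mu_{\cX,P}$ and $\nu_{\cX,P}$ depend only on the formal isomorphism type of $(\cX,P)$, so I may replace $\cX$ by $\Spec \widehat{\mathcal{O}}_{\cX,P}$ and compute on its minimal resolution.

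In the rational double point case, the minimal resolution has exceptional divisor given by an ADE configuration of $(-2)$-curves, each isomorphic to $\Pro^1_k$, meeting transversally according to the corresponding Dynkin diagram. Hence $g_i = 0$ for every component and the dual graph has no cycles, so $b_1(\Gamma_{\cC,P}) = 0$.

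In the quotient singularity case, I would invoke the classical theorem that the minimal resolution of a quotient of a regular two-dimensional local ring by a finite group has exceptional divisor consisting of a tree of smooth rational curves. In characteristic zero this is due to Brieskorn; for arbitrary residue characteristic, including wild quotient singularities, the same conclusion is obtained by building a resolution via $G$-equivariant blowups of the regular cover and then successively resolving the intermediate quotient singularities, observing that rationality of the components and tree-shapedness of the exceptional locus are preserved at each step. Consequently $g_i = b_1(\Gamma_{\cC,P}) = 0$ in this case as well.

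Finally, for the ``in particular'' clause: by definition, $P$ being a rational singularity means $R^1\pi_\ast \mathcal{O}_{\cX'} = 0$, hence $p_{g,P} = 0$. Substituting $p_{g,P} = 0$ together with $g_i = b_1(\Gamma_{\cC,P}) = 0$ into the definitions of $\mu_{\cX,P}$ and $\nu_{\cX,P}$ yields $\mu_{\cX,P} = \nu_{\cX,P} = \Gamma_\pi^2 + V(\cX',E)$. The main subtle point is justifying the tree-of-rational-curves description of the minimal resolution for wild quotient singularities in positive residue characteristic; the Brieskorn classification does not apply there directly, and one must argue via the equivariant cover as sketched.
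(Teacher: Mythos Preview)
Your approach is essentially the same as the paper's: verify $g_P = b_1(\Gamma_{\cC,P}) = 0$ separately for rational double points and for quotient singularities, then use rationality to kill $p_{g,P}$. The paper is terser, citing Lipman for the ADE case and, for the quotient case in arbitrary characteristic, citing Lorenzini's \emph{Wild quotient singularities of surfaces}, Theorem~2.8, which establishes exactly the tree-of-rational-curves statement you flagged as the subtle point; so rather than sketching the equivariant-blowup induction yourself, you can simply invoke that reference.
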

\begin{proof}
If $x \in \cX$ would be a rational double point, it follows that for a minimal desingularization $\cX' \to \cX$, $g_P  = b_1(\Gamma_{\cX,P})=0$, cf. \cite[Section 24]{Lipman}.

 If $P \in \cX$ is formally isomorphic to a quotient singularity arising from a finite group acting on a regular two dimensional scheme, it is known that the values $g_P$ and $b_1(\Gamma_{\cX,P})$ are zero for any resolution, see e.g. \cite[Thm.~2.8]{lorenzini-wildsurfacequot}.    
\end{proof}
\subsection{Localized Chern classes}\label{subsec:localizedChern}

In this section $X \to S$ denotes a regular scheme $X$ over the spectrum $S$ of a discrete valuation ring, $R$, such that $X\to S$ is faithfully flat of constant relative dimension $d \geq 1$, and the generic fiber is smooth.

In this case, any coherent sheaf $\mathcal{F}$ is a perfect complex of finite tor dimension. Suppose $\mathcal{F}$ is locally free of rank $r$ outside of a closed subset $Z \subseteq X_k \subseteq X$, with $Z$ of finite type over $k$. Then for $i > r$ the localized Chern class $c_i^Z(\mathcal{F}) \cap [X] \in A_{d+1-i}(Z)$ is defined, as detailed below. Here $A_{j}(Z)$ denotes the Chow group of dimension $j$-cycles on the variety $Z.$

We will at most have two dimensional regular schemes, in which case coherent sheaves are resolved by a 2-term complex.  We only give, as in \cite{Bloch}, the definition in this case. So let $0 \to E^1 \stackrel{d_1}{\to} E^0 \to \mathcal{F}$ be a length 2-resolution of a coherent sheaf $\mathcal{F}$ in terms of vector bundles, $E^i$, of ranks $e_i$. Consider $p: G \to X$, where $G$ is the Grassmannian of rank $e_1$ subbundles of $E^1 \oplus E^0.$ Denote by $\xi = p^\ast E^0-\xi^1$, where $\xi^1$ is the universal subbundle of $G$ Consider the scheme theoretic image $\alpha'$ of $X \times \mathbb{A}^1$ in $G \times \mathbb{P}^1$ via the graph embedding $(x,\lambda) \mapsto ( \text{Graph}(\lambda d_1(x)), \lambda).$ Also consider the scheme theoretic image $\alpha''$ of the map $(X \setminus Z) \times \mathbb{A}^1 \to G \times \mathbb{P}^1$ determined by the map $x \mapsto \left(E^1(x) \xrightarrow{d_1}  E^0(x) \subseteq 0 \oplus E^0(x), x \right).$ With the map $i_\infty: G = G \times \{\infty \} \to G \times \mathbb{P}^1$, one can define  $\gamma = i_\infty^\ast \left(\alpha' - \alpha''\right)$ as a cycle on $G$, with support on $G_Z= G \times_X Z$. Denote by $\eta: G_Z \to Z$ the projection. Then, for $i > d$, one defines:

\begin{equation}\label{def:locChernclass}
    c_i^Z(\mathcal{F})\cap [X]  = \eta_{\ast}\left( c_{i}(\xi) \cap \gamma \right) \in A_{d+1-i}(Z).
\end{equation}

We remark that the scheme theoretic image of a reduced scheme is simply the reduced structure on the closure of its image, and since all the involved schemes are reduced the above images are in fact topological notions. 
We recall a few standard properties of these classes: 

\begin{prop}\label{prop:chernclassproperties}
    Let $\cX \to S, \mathcal{F}$ and $i$ be as above. Then the following holds for the localized Chern classes $c_i^Z(\mathcal{F}) \cap [\cX ].$

\begin{enumerate}
    \item If $Z = \bigsqcup Z_j$, and for each $j$, there is an open neighborhood $\cX_j$ of $Z_j$, which does not intersect any of the other $Z_i$, then 
    \begin{displaymath} 
        c_i^Z(\mathcal{F}) \cap [\cX ] = \sum c_i^{Z_j}(\mathcal{F}|_{\cX_j}) \cap [\cX_j ]
    \end{displaymath}
        in
        $$A_{d+1-i}(Z) = \bigoplus A_{d+1-i}(Z_j).$$
    \item If $i: Z \subseteq Z'$, then 
    \begin{displaymath}
        i_\ast  c_i^Z(\mathcal{F}) \cap [\cX ] =  c_i^{Z'}(\mathcal{F}) \cap [\cX ]
    \end{displaymath}
    \item If $Z=P$ is a closed point, the localized Chern class only depends on the localization $\mathcal{O}_{\cX ,P}$.
    \item \label{item4:chernclassproperties} If $Z=P$ is a closed point and $\mathcal{F} = \Omega_{X/S}$, the localized Chern class only depends on the completion  $\widehat{\mathcal{O}}_{\cX,P}$ of the local ring $\mathcal{O}_{\cX,P}$ along its maximal ideal.
\end{enumerate}
\end{prop}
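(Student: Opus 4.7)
The plan is to derive items (1)--(3) directly from the explicit construction of the cycle $\gamma$ on $G_Z$ recalled in the preceding paragraphs, and to reduce item (4) to flat base change for localized Chern classes combined with a comparison of $\Omega_{A/R} \otimes_A \widehat{A}$ and $\Omega_{\widehat{A}/R}$ via the conormal sequence.

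For (1), the starting observation is that the cycles $\alpha'$ and $\alpha''$ defining $\gamma$ coincide over $\cX \setminus Z$ by construction, so $\gamma$ is supported on $G_Z = G \times_\cX Z$. When $Z = \bigsqcup Z_j$ admits the disjoint open neighborhoods $\cX_j$, each component of $\alpha'$ and $\alpha''$ lies over exactly one $\cX_j$, yielding a decomposition $\gamma = \sum_j \gamma_j$ with $\gamma_j$ supported on $G \times_{\cX_j} Z_j$. Both $\eta_\ast$ and the cap product with $c_i(\xi)$ respect the direct sum decomposition $A_{d+1-i}(Z) = \bigoplus_j A_{d+1-i}(Z_j)$, so the additivity follows. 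For (2), the inclusion $Z \hookrightarrow Z'$ induces a closed embedding $G_Z \hookrightarrow G_{Z'}$; viewing $\gamma$ as a cycle on $G_{Z'}$, the identity reduces to the projection formula applied to the factorization of $\eta$ through this embedding.

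For (3), the construction of $\gamma$ depends only on the input $(\mathcal{F}, E^0, E^1, d_1)$ restricted to an arbitrarily small open neighborhood $U$ of $P$, again because $\alpha'$ and $\alpha''$ differ only over $Z = \{P\}$. Flat base change along $\Spec \mathcal{O}_{\cX,P} \to \cX$ (which commutes with the Grassmannian construction and with $\eta_\ast$) then shows that $c_i^P(\mathcal{F}) \cap [\cX]$ is determined by the pullback of the data to $\Spec \mathcal{O}_{\cX,P}$, and hence by $\mathcal{O}_{\cX,P}$ itself.

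For (4), I first apply (3) to reduce to the case $\cX = \Spec A$ with $A = \mathcal{O}_{\cX,P}$, so the claim becomes that the integer $c_{d+1}^P(\Omega_{A/R})$ depends only on $\widehat{A}$. Since $R$ is a complete DVR it is excellent, so $A$, being essentially of finite type over $R$, is excellent, and the completion map $A \to \widehat{A}$ is a regular (in particular flat) ring homomorphism. Flat base change then gives
$$c_{d+1}^P(\Omega_{A/R}) = c_{d+1}^{\widehat{P}}(\Omega_{A/R} \otimes_A \widehat{A}),$$
and it remains to identify this with $c_{d+1}^{\widehat{P}}(\Omega_{\widehat{A}/R})$. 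The two candidate sheaves are linked by the conormal exact sequence
$$\Omega_{A/R} \otimes_A \widehat{A} \to \Omega_{\widehat{A}/R} \to \Omega_{\widehat{A}/A} \to 0,$$
and the plan is to show, using regularity of $A \to \widehat{A}$, that the contribution of $\Omega_{\widehat{A}/A}$ to the localized class at the closed point vanishes, so that the two outer terms produce the same localized Chern class at $\widehat{P}$. This final comparison is the step I expect to be the main obstacle: $\Omega_{\widehat{A}/A}$ is typically neither locally free nor finitely generated, so its vanishing contribution cannot be seen purely formally from bilinearity and must be extracted from the geometric properties of the excellent completion.
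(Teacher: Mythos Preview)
Your treatment of items (1)--(3) agrees with the paper's: these follow from the explicit construction of $\gamma$ together with flat base change along $\Spec \mathcal{O}_{\cX,P} \to \cX$, exactly as you outline.

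For item (4) there is a genuine gap, and it lies precisely where you flagged the ``main obstacle.'' Your plan is to compare $\Omega_{A/R} \otimes_A \widehat{A}$ with the \emph{algebraic} K\"ahler differentials $\Omega_{\widehat{A}/R}$ via the conormal sequence and then argue that $\Omega_{\widehat{A}/A}$ contributes nothing. The problem is not merely that $\Omega_{\widehat{A}/A}$ is badly behaved; already $\Omega_{\widehat{A}/R}$ itself is typically \emph{not} coherent (since $\widehat{A}$ is not essentially of finite type over $R$), so the expression $c_{d+1}^{\widehat{P}}(\Omega_{\widehat{A}/R})$ is not even defined in the framework of the preceding subsection, which requires a finite locally free resolution. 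Thus the target object in your comparison does not exist, and no amount of regularity of $A \to \widehat{A}$ will salvage the conormal-sequence approach.

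The paper sidesteps this entirely. After flat base change one has the coherent $\widehat{A}$-module $\Omega_{A/R} \otimes_A \widehat{A}$, and the paper invokes the elementary isomorphism
\[
\Omega_{A/R} \otimes_A \widehat{A} \;\simeq\; \varprojlim_n \Omega_{\widehat{A}/R}/\mathfrak{m}^n
\]
(cf.\ \cite[Ch.~6.1, Exercise~1.3]{liu}). The right-hand side is the \emph{completed} module of differentials, which is coherent and manifestly intrinsic to $\widehat{A}$. This immediately shows that the pulled-back sheaf, and hence its localized Chern class, depends only on $\widehat{A}$. You should replace your conormal-sequence comparison with this identification.
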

\begin{proof}
    The first two properties are essentially in the definition of the localized Chern classes, cf. \cite[Proposition 1.1]{Bloch}. For the last two points, we first record the following general  observation. 
    
    Suppose $q: U \to X$ is a flat morphism, with the natural map $q^{-1} Z \to Z$ being an isomorphism. Since the scheme theoretic images commutes with flat base change,  \cite[\href{https://stacks.math.columbia.edu/tag/081I}{081I}]{stacks-project}, and $[q^\ast E^1 \to q^\ast E^0]=Lq^\ast \mathcal{F}\simeq q^\ast \mathcal{F}$, the formation of $\alpha', \alpha''$ commute with restriction along the maps induced by $q$. Since there is an identification of the spaces  $G_Z$ and $G_{q^{-1}(Z)}$, both $\mathcal{F}$ on $X$ and $q^\ast \mathcal{F}$ on $U$ define the same cycle $\gamma$. This is the only ingredient in \eqref{def:locChernclass} which depends on data outside of $Z.$ 

    From this we can conclude that we have the equality
    \begin{displaymath}
        c_i^Z(q^\ast \mathcal{F}) \cap [U] = c_i^Z(\mathcal{F}) \cap [\cX ] .
    \end{displaymath}

Now, if $U$ is the localization at $P$, it satisfies the above assumption, and hence $q^\ast \mathcal{F}$ only depends on $U \to S$. In the case when $\mathcal{F} = \Omega_{X/S}$ and $q: U \to X$ is the completion at $P$, we additionally use the fact that 
\begin{displaymath}
    \Omega_{A/R} \otimes_A \widehat{A} \simeq \varprojlim_n \Omega_{\widehat{A}/R}/\mathfrak{m}^n,
\end{displaymath}
where $\mathfrak{m}_P$ is the maximal ideal of $A$, see eg. \cite[Ch. 6.1, Exercise 1.3]{liu}. This latter isomorphism shows that the relative cotangent bundle only depends on the formal completion. 
\end{proof}

\section{Deligne--Riemann-Roch and base change conductors}\label{sec:DRR}

\subsection{The Deligne--Riemann--Roch isomorphism}

Consider a smooth curve $C \to \Spec K$. We continue to consider the determinant of the cohomology of $L$, the line bundle $\det R\Gamma(C, L)$ over $K$ introduced in section \ref{subsec:relativeinttheory}. We denote it by $\lambda(L)$. If  $\omega_{C}$ denotes the (relative) dualizing sheaf, as shown by Deligne in \cite{detcoh}, there is a canonical isomorphism

\begin{equation}\label{eq:genericDRR}
    12 \lambda(L) \simeq \langle \omega_{C} , \omega_{C} \rangle +  6 \langle L, L - \omega_{C} \rangle. 
\end{equation}
The isomorphism, and variations thereof, thus appearing is sometimes referred to as the Deligne--Riemann--Roch theorem. 
\subsubsection{}
Let $\cL$ be a line bundle on a model $\cX  \to S$ of $C/K$. We suppose that $\cX$  is normal and $\Q$-Gorenstein. If $\mcr L$ is a line bundle on $\cX$, generically isomorphic to $L$, both sides of $\eqref{eq:genericDRR}$ can still be defined over $S$ as $\Q$-Cartier divisors. Here we use the dualizing sheaf, or equivalently the canonical divisor, $\omega_{\cX/S}$, to extend the right hand side, via the discussion in Proposition \ref{prop:relationtointtheory}. Using the convention introduced in \ref{subsub:comparelattices}, we can write 

\begin{equation}\label{eq:DRRwithDeg}
    12 \lambda(\cL) = \langle \omega_{\cX/S}, \omega_{\cX/S} \rangle + 6 \langle \cL, \cL - \omega_{\cX/S} \rangle + \Delta,
\end{equation}
for some integer $\Delta = \Delta_{\mcr{C}/R} $. If $\mcr C$ is regular, it was proven by \cite{Saitodisc} that $\Delta = -\Art_{\cX/S}$. In \cite{Eri16, ErikssonErratum}, this was extended to normal models, which are local complete intersections over $S$, with a correction term. The correction term is given by a sum of Milnor numbers, cf. \ref{subsection:Milnor}, of the non-regular points:
\begin{displaymath}
    \mu_{\cX} = \sum_{x \in \cX_s} \mu_{\cX, x}.
\end{displaymath}

\subsection{Consequences for the Lie algebra of the Jacobian} We are in particular interested in this when we can choose the $\cX$ and $\cL$ so that 
\[\lambda(\cL) = \det \omega_{\mathscr{J}/S} = \det \left(\Lie J^0\right)^\vee ,\]
in which case questions about the base change conductor can be translated into questions about the right hand side of \eqref{eq:DRRwithDeg}.

\begin{prop}\label{thm:reftakeshi}
Let $f: \cX \to S$ be a normal  $\mathbb{Q}$-Gorenstein model of $C \to \Spec K$. Then there are canonical isomorphisms
\[12 \lambda(\cL) = \langle \omega_{\cX/S}, \omega_{\cX/S} \rangle + 6 \langle \cL, \cL - \omega_{\cX/S} \rangle   -\operatorname{Art}_{\cX/S} + \mu_{\cX} \]

and  
    \[\det \omega_{\mathscr{J}/S} = \det \left(\Lie J^0\right)^\vee \simeq \langle \omega_{\cX/S}, \omega_{\cX/S} \rangle  - \Art_{\cX/S} + \nu_{\cX}.\] 
\end{prop}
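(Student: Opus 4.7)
The first isomorphism reduces to an identification of the correction term $\Delta$ from \eqref{eq:DRRwithDeg}. In the regular case Saito \cite{Saitodisc} shows $\Delta = -\Art_{\cX/S}$, and the extension to normal $\mathbb{Q}$-Gorenstein models, with the additional Milnor-number correction $\mu_{\cX}$ at non-regular points, is due to \cite{Eri16, ErikssonErratum}. My plan is to invoke these references; under the stated hypotheses they apply essentially verbatim, the only point to verify being that the arguments of \emph{loc.~cit.} go through in the full $\mathbb{Q}$-Gorenstein setting rather than only for local complete intersections.

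For the second isomorphism, I would specialize $\cL = \omega_{\cX/S}$ in the first formula. The middle term $6\langle \omega_{\cX/S}, \omega_{\cX/S} - \omega_{\cX/S}\rangle$ vanishes trivially, so that
\[ 12\,\lambda(\omega_{\cX/S}) \simeq \langle \omega_{\cX/S}, \omega_{\cX/S}\rangle - \Art_{\cX/S} + \mu_{\cX}. \]
The heart of the proof is then to identify $\lambda(\omega_{\cX/S})$ with $\det \omega_{\mathscr{J}/S}$ up to a $p_g$-correction, and to combine this with the identity $\mu_{\cX} - \nu_{\cX} = 12\,p_g$, with $p_g = \sum_P p_{g,P}$, which is immediate from Definition \ref{def:Milnornumber}.

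To perform the identification, I would choose a resolution $\pi \colon \cX' \to \cX$ and set $g = f\pi$. Since $\cX'$ is regular, Proposition \ref{prop:isocohomologyLiealgebra} applied to $\cX'$ yields $\omega_{\mathscr{J}/S} \simeq g_*\omega_{\cX'/S}$, and Grothendieck--Serre duality along the curve family $g$ gives $R^1 g_*\omega_{\cX'/S} \simeq \mathcal{O}_S$; together these produce $\det \omega_{\mathscr{J}/S} \simeq \lambda(\omega_{\cX'/S})$. To compare $\lambda(\omega_{\cX'/S})$ with $\lambda(\omega_{\cX/S})$, I would apply Grothendieck duality for the proper birational $\pi$ together with the standard description $R\pi_*\mathcal{O}_{\cX'} \simeq [\mathcal{O}_{\cX} \to T]$, where $T = R^1\pi_*\mathcal{O}_{\cX'}$ is supported at the singular points with total length $p_g$. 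Dualizing gives $R\pi_*\omega_{\cX'/S} \simeq R\mathcal{H}om_{\cX}(R\pi_*\mathcal{O}_{\cX'}, \omega_{\cX/S})$, and local duality on the Cohen--Macaulay surface $\cX$ then shows that $[R\pi_*\omega_{\cX'/S}] = [\omega_{\cX/S}] - p_g$ in the virtual category. Pushing forward to $S$ and taking determinants of cohomology delivers $\lambda(\omega_{\cX'/S}) \simeq \lambda(\omega_{\cX/S}) - p_g$, which combined with the previous display and $\mu_{\cX} - \nu_{\cX} = 12\,p_g$ gives the desired formula (with the appropriate division by $12$ in $\mathbb{Q}$-line bundles on $S$).

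The main obstacle is the careful execution of the Grothendieck duality computation on the possibly non-rational resolution $\pi$, in particular verifying that the length of the torsion contribution to $[R\pi_*\omega_{\cX'/S}]$ is exactly $p_g$, so that it matches precisely the $12\,p_g$ gap between $\mu_{\cX}$ and $\nu_{\cX}$.
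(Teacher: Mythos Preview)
Your plan for the first isomorphism matches the paper's: both invoke Saito's result on a resolution $\cX'$ and track how each side changes under $\pi\colon\cX'\to\cX$, with the $12p_g$, $\Gamma_\pi^2$, and Euler-characteristic corrections assembling into $\mu_{\cX}$.

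For the second isomorphism your route is genuinely different, and there is a gap. Specializing $\cL=\omega_{\cX/S}$ in the first formula requires $\omega_{\cX/S}$ to be an honest line bundle on $\cX$; under the stated hypothesis it is only $\mathbb{Q}$-Cartier, so the Deligne pairing $\langle\cL,\cL-\omega_{\cX/S}\rangle$ is not available for that choice of $\cL$. Your Grothendieck-duality computation is in fact correct (the triangle $T^\ast[-1]\to R\pi_\ast\omega_{\cX'/S}\to\omega_{\cX/S}$ gives exactly the $p_g$ discrepancy you want, and Grauert--Riemenschneider is not even needed at the level of $K_0$), but it only closes the argument in the Gorenstein case.

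The paper avoids this entirely. It applies Saito's formula on the regular $\cX'$ with $\cL=\mathcal{O}_{\cX'}$, uses Proposition~\ref{prop:isocohomologyLiealgebra} to identify $\lambda(\mathcal{O}_{\cX'})\simeq\det\omega_{\mathscr{J}/S}$ directly, and then rewrites only the right-hand side ($\langle\omega_{\cX'/S},\omega_{\cX'/S}\rangle$ and $\Art_{\cX'/S}$) in terms of $\cX$-data. Since one never compares $\lambda(\mathcal{O}_{\cX'})$ with $\lambda(\mathcal{O}_{\cX})$, no $12p_g$ term appears and one lands on $\nu_{\cX}$ immediately, without passing through $\mu_{\cX}$. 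Your argument is easily repaired in the same spirit: take $\cL=\mathcal{O}_{\cX}$ in the first formula (so the middle pairing vanishes because $\langle\mathcal{O}_{\cX},-\rangle=0$), and use the elementary projection-formula identity $\lambda(\mathcal{O}_{\cX})=\lambda(\mathcal{O}_{\cX'})+p_g$ in place of your dual statement for $\omega$.
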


\begin{proof}
    This is essentially a reformulation of the main theorem of \cite{Eri16, ErikssonErratum}. The same proof carries over to this context, but for the sake of completeness, we include the main lines of the argument. 

    We address first the first isomorphism, by reducing to the case when $\cX$ is regular, in which case the result was known. One hence considers a resolution $\cX' \to \cX$. When comparing the left-hand sides of \eqref{eq:DRRwithDeg} the factor corresponding to $12 p_g$ in Definition \ref{def:Milnornumber} of the Milnor number appears. The factor in the same formula corresponding to $\Gamma^2$ appears when comparing the term canonical bundles $\omega_{\cX'/S}$ and $\omega_{\cX/S}$, and relating this comparison to intersection theory via Proposition \ref{prop:relationtointtheory}. Finally, the other terms appear in the comparison of topological Euler characteristics and is valid as long as $\cX$ is normal.  

    The second isomorphism follows from these considerations. Here we use the statement for $\calo_{\cX'}$ on the level of $\cX'.$ By Proposition \ref{prop:isocohomologyLiealgebra} it follows that we have $\lambda(\calo_{\cX'}) \simeq \det \omega_{\mathscr{J}/S}.$
    When rewriting the terms as described for the first isomorphism, we don't perform av comparison of $\lambda(\calo_{\cX'})$ with $\lambda(\calo_{\cX}).$ This has the effect that there is no factor $12p_g$ appearing.
\end{proof}

\begin{remark}
    One of the main application of the above proposition we have in mind is a model $\cX$ obtained as a quotient $\cX'/G$, where $\cX'$ is a regular model over the valuation ring of a Galois extension $L/K$ realizing semi-stable reduction of $C/K$, and where $G$ is the Galois group of $L/K.$
\end{remark}

\subsubsection{}\label{subsub:logregular}

Suppose now that $\cX \to S$ is a log-regular model of $C$, where we equip $\cX$ with the divisorial logarithmic structure induced from elements of $\calo_{\cX}$ which are invertible on $C$.  Let $\omega_{\cX/S}^{\log}$ be the relative logarithmic canonical bundle.  It can be defined as 
\begin{equation}\label{eq:logcanonical}
    \omega_{\cX/S}^{\log} = \omega_{\cX/S} + {\cX}_{k, red} - {\cX}_{k},
\end{equation}
It is invertible by \cite[Proposition 3.3.6]{logjumps}.
Since $\cX_{k, red}$ is $\mathbb{Q}$-Cartier, cf. \cite[Lemma 3.3.5]{BakNic}, it follows that $\omega_{\cX/S}$ itself is $\Q$-Cartier. 

Log-regularity implies that $\cX$ has only rational singularities: indeed, it is well known that the exceptional locus in the minimal resolution of singularities forms a chain of smooth rational curves, hence the argument is the same as in \cite[Prop.~3.1]{ItoSchroer}. Moreover, it follows by Grauert-Riemenschneider for normal 2-dimensional schemes \cite[\href{https://stacks.math.columbia.edu/tag/0AXD}{0AXD}]{stacks-project} that for a desingularization $\pi: \cX'\to \cX$, we have  $R^1 \pi_\ast \omega_{\cX'/S}=0.$ It then follows from \cite[Proposition 11.9]{Kollar} that $\pi_\ast \omega_{\cX'/S} = \omega_{\cX/S} $ and $\pi_\ast \calo_{\cX'} = \calo_{\cX}, R^1 \pi_\ast \calo_{\cX'} = 0$. We conclude that there are natural isomorphisms
\begin{equation}\label{eq:detcohcomparison}
      \lambda (\omega_{\cX/S}) \simeq \lambda(\omega_{\cX'/S}) \simeq \lambda(\calo_{\cX'}) \simeq \lambda(\calo_{\cX}),
\end{equation}
where we have used Grothendieck--Serre duality in the middle. The whole composition is also given by Grothendieck--Serre duality. We then have: 
\begin{displaymath}
    \lambda(\calo_{\cX}) \simeq  \det \mathrm{Lie}(\mathscr{J}/S)^\vee,
\end{displaymath}
where $\mathscr{J}$ denotes the N\'eron model of $C.$ \ 

We summarize these observations with the following proposition:
\begin{prop}\label{prop:keydetformula}
    Let $\cX \to S$ be a log-regular model of $C \to \Spec K$. Then there is a canonical isomorphism of $\Q$-Cartier divisors on $S$:
    \begin{displaymath}
        12 \det \mathrm{Lie}(\mathscr{J})^\vee \simeq \langle \omega_{\cX/S}, \omega_{\cX/S} \rangle - \Art_{{\cX}/S }+ \nu_{{\cX}}. 
    \end{displaymath}
    Here $\mathscr{J}$ denotes the N\'eron model of the Jacobian of $C. $ 
\end{prop}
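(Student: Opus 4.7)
The plan is to specialize the first statement of Proposition \ref{thm:reftakeshi} to $\cL = \mathcal{O}_{\cX}$ and then rewrite the resulting identity using the structural properties of log-regular models collected in \ref{subsub:logregular}.

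Before invoking Proposition \ref{thm:reftakeshi}, I would verify that any log-regular model falls within its hypotheses. Normality is built into log-regularity. The $\mathbb{Q}$-Gorenstein condition follows immediately from the logarithmic canonical formula \eqref{eq:logcanonical}: since $\omega_{\cX/S}^{\log}$ is invertible and $\cX_{k,\red}$ is $\mathbb{Q}$-Cartier by \cite[Lemma 3.3.5]{BakNic}, the difference $\omega_{\cX/S}$ is itself $\mathbb{Q}$-Cartier.

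Next I would apply Proposition \ref{thm:reftakeshi} with $\cL = \mathcal{O}_{\cX}$. By the definition \eqref{def:deligneproduct} of the Deligne pairing, or equivalently by its bimultiplicativity, $\langle \mathcal{O}_{\cX}, - \rangle$ is canonically trivial, so the correction term $6\langle \cL, \cL - \omega_{\cX/S}\rangle$ drops out. This yields the preliminary identity
$$12\,\lambda(\mathcal{O}_{\cX}) \simeq \langle \omega_{\cX/S}, \omega_{\cX/S}\rangle - \Art_{\cX/S} + \mu_{\cX}.$$

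The proof is then completed by two rewritings, both powered by the fact that log-regular singularities are rational, as recalled in \ref{subsub:logregular}. First, the chain of canonical isomorphisms \eqref{eq:detcohcomparison}, combined with Proposition \ref{prop:isocohomologyLiealgebra} applied on a resolution $\pi \colon \cX' \to \cX$, identifies $\lambda(\mathcal{O}_{\cX})$ with $\det \mathrm{Lie}(\mathscr{J})^\vee$; this uses $\pi_*\mathcal{O}_{\cX'} = \mathcal{O}_{\cX}$ with $R^1\pi_*\mathcal{O}_{\cX'} = 0$ (rationality of the singularities) together with Grauert--Riemenschneider vanishing $R^1\pi_*\omega_{\cX'/S} = 0$. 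Second, rationality of the singularities also gives $p_{g,x} = 0$ at every non-regular point $x$, so Definition \ref{def:Milnornumber} yields $\mu_{\cX, x} = \nu_{\cX, x}$, and hence $\mu_{\cX} = \nu_{\cX}$. Substituting both identifications into the preliminary identity gives the claimed formula.

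I do not expect a genuine obstruction here: the proposition is essentially a reformulation of Proposition \ref{thm:reftakeshi} tailored to the log-regular setting, and the whole argument amounts to verifying that each structural feature of log-regularity (normality, $\mathbb{Q}$-Gorenstein, rational singularities) enters at the correct place. The only mildly delicate point is to keep careful track of the canonical nature of the isomorphisms in \eqref{eq:detcohcomparison}, but this is essentially done already in the paragraph preceding the statement.
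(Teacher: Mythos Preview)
Your proof is correct and matches the paper's approach: the proposition is presented in the paper as a summary of the observations in \ref{subsub:logregular}, and your argument faithfully reconstructs those observations (normality and $\mathbb{Q}$-Gorenstein from log-regularity, rationality of singularities, the chain \eqref{eq:detcohcomparison}, and $\mu_{\cX}=\nu_{\cX}$). One small shortcut you could take: rather than invoking the first isomorphism of Proposition~\ref{thm:reftakeshi} with $\cL=\mathcal{O}_{\cX}$ and then separately arguing $\mu_{\cX}=\nu_{\cX}$, you can apply its second isomorphism directly, which already yields $\det(\mathrm{Lie}\,J^0)^\vee \simeq \langle \omega_{\cX/S},\omega_{\cX/S}\rangle - \Art_{\cX/S} + \nu_{\cX}$ for any normal $\mathbb{Q}$-Gorenstein model.
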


Notice that since $\cX$ has rational singularities, we have $\mu_{\cX} = \nu_{\cX}$.

\subsection{A formula for the base change conductor of the Jacobian}
We summarize the above sections in the following theorem:   
\begin{theorem}\label{theorem-cond} Let $K'/K$ be a finite extension of fields, and let $\cX$ be a regular model over $R$ of a curve $C$. Let $\pi: \cX' \to \cX \times_S S'$ be a desingularization. Denote by $\Gamma = \omega_{\cX'/S'} - \pi^* \omega_{\cX/S}$ the discrepancy. Then, with $e = [K':K]$,  the following formula holds:
$$12 \left (\lambda(\calo_{\cX'}) - R'\otimes_R \lambda(\calo_{\cX}) \right ) = $$
$$ \Gamma^2 + 2 \Gamma \cdot \pi^* \omega_{\cX/S} - \mathrm{Art}_{\cX'/S'} + e \cdot \mathrm{Art}_{\cX/S}.$$
\end{theorem}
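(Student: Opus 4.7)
The proof plan is essentially to apply the Deligne--Riemann--Roch formula of Proposition \ref{thm:reftakeshi} to both $\cX$ and $\cX'$ (both are regular, so no Milnor correction appears), and then to compare the right-hand sides using $\omega_{\cX'/S'}=\pi^{\ast}\omega_{\cX/S}+\Gamma$. I would proceed as follows.

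First, taking $\cL=\calo_{\cX}$ in Proposition \ref{thm:reftakeshi} and using that $\langle \calo_{\cX},\cM\rangle=0$ for any $\cM$ (by definition \eqref{def:deligneproduct}), I get
\[
12\,\lambda(\calo_{\cX})\;=\;\langle \omega_{\cX/S},\omega_{\cX/S}\rangle \;-\;\mathrm{Art}_{\cX/S}
\]
as $\Q$-divisors on $S$. The analogous identity holds on $S'$ for $\cX'$. To base change the first identity to $S'$, I would use compatibility of the Deligne pairing with base change, together with the fact that the closed point of $S$ pulls back to $e$ times the closed point of $S'$ as a Cartier divisor. This yields
\[
12\,(R'\otimes_R \lambda(\calo_{\cX}))\;=\;\langle p^{\ast}\omega_{\cX/S},p^{\ast}\omega_{\cX/S}\rangle_{\cX_{R'}/S'}\;-\;e\cdot \mathrm{Art}_{\cX/S},
\]
where $p:\cX_{R'}=\cX\times_S S'\to \cX$ is the projection.

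Second, I would push this identity further to $\cX'$ via the projection formula for the Deligne pairing along the birational morphism $\pi:\cX'\to \cX_{R'}$; writing $\widetilde{\omega}=\pi^{\ast}\omega_{\cX/S}$, this gives
\[
\langle \widetilde{\omega},\widetilde{\omega}\rangle_{\cX'/S'}\;=\;\langle p^{\ast}\omega_{\cX/S},p^{\ast}\omega_{\cX/S}\rangle_{\cX_{R'}/S'}.
\]
Combined with Proposition \ref{thm:reftakeshi} applied to $\cX'$, I obtain
\[
12\,\lambda(\calo_{\cX'})\;=\;\langle \widetilde{\omega}+\Gamma,\widetilde{\omega}+\Gamma\rangle_{\cX'/S'}\;-\;\mathrm{Art}_{\cX'/S'}.
\]
Expanding via bilinearity (Proposition \ref{prop:relationtointtheory}\eqref{item2:telationtointtheory}, applicable because $\Gamma$ is supported on the special fiber) produces the three terms $\langle\widetilde{\omega},\widetilde{\omega}\rangle+2\,\Gamma\cdot\widetilde{\omega}+\Gamma^{2}$.

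Finally, subtracting the base-changed identity for $\cX$ from this identity for $\cX'$ cancels the $\langle\widetilde{\omega},\widetilde{\omega}\rangle$ terms and yields exactly
\[
\Gamma^{2}+2\,\Gamma\cdot\pi^{\ast}\omega_{\cX/S}-\mathrm{Art}_{\cX'/S'}+e\cdot\mathrm{Art}_{\cX/S},
\]
as claimed. The main technical point — and the step I expect to be the most delicate — is step two, i.e.\ the invariance of $\langle\widetilde{\omega},\widetilde{\omega}\rangle$ under passage from $\cX_{R'}$ to its desingularization $\cX'$. One needs to know that the Deligne pairing is insensitive to the birational model used, which follows because $\pi$ is proper birational and the relevant determinants of cohomology agree up to canonical isomorphism when the pullback line bundle comes from the base of $\pi$; compatibility of the Deligne pairing with the arbitrary finite base change $S'\to S$ on the other hand is standard and handled by careful bookkeeping of the factor $e$ from the ramification of $R'/R$.
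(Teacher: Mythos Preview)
Your approach is correct and is exactly the argument the paper intends: the theorem is introduced as a summary of the preceding results, i.e.\ Proposition \ref{thm:reftakeshi} applied to each regular model, base-changed and subtracted, with the expansion of $\langle\widetilde{\omega}+\Gamma,\widetilde{\omega}+\Gamma\rangle$ via Proposition \ref{prop:relationtointtheory}. One clarification on the step you flag as delicate: it is \emph{not} generally true that the individual determinants $\lambda_{\cX'}(\pi^{\ast}\cN)$ and $\lambda_{\cX_{R'}}(\cN)$ agree, since $\cX_{R'}$ need not be normal and $R\pi_{\ast}\mathcal{O}_{\cX'}$ need not equal $\mathcal{O}_{\cX_{R'}}$; what \emph{is} true is that their difference is independent of $\cN$ (the cone of $\mathcal{O}_{\cX_{R'}}\to R\pi_{\ast}\mathcal{O}_{\cX'}$ is supported on finitely many closed points, and tensoring such an object by a line bundle does not change its length), so in the alternating four-term sum defining the Deligne pairing these corrections cancel and $\langle\pi^{\ast}\cL,\pi^{\ast}\cM\rangle_{\cX'/S'}=\langle\cL,\cM\rangle_{\cX_{R'}/S'}$ holds as needed.
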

The above theorem provides a formula for the change of determinants of cohomologies, and hence an expression for the base change conductor of Jacobian in this setting. We record this as a corollary below. It is however useful to allow some singularities of the total space, and we also include some cases which affords that assumption, relying on the results of the previous section: 
\begin{corollary}\label{cor:basechconformula}
    Suppose that the extension $K'/K$ realizes semi-stable reduction for $C/K$. Then :
    \begin{enumerate}
    \item \label{item1:corbasechconformula} With the notation as in Theorem \ref{theorem-cond}, we have
    \begin{displaymath}
        - 12 c(J)  = \frac{1}{ e }\left( \Gamma^2 + 2 \Gamma \cdot \pi^* \omega_{\cX/S} - \mathrm{Art}_{\cX'/S'} + e \cdot  \mathrm{Art}_{\cX/S}\right).
    \end{displaymath}
            \item \label{item2:corbasechconformula} If $\cX$ is normal $\mathbb{Q}$-Gorenstein, and the normalized base change $\cX'$ is a regular model over $S'$, then we have
        \begin{displaymath}
        - 12 c(J)  = \frac{2 }{ e } \left(\Gamma^2+\Gamma \cdot \pi^* \omega_{\cX/S} -  \mathrm{Art}_{\cX'/S'}\right) + \mathrm{Art}_{\cX/S} -\nu_\cX.
    \end{displaymath}
    \item \label{item3:corbasechconformula} If $\cX$ is $\mathbb{Q}$-Gorenstein with at most rational singularities, and the normalized base change $\cX'$ is smooth over $S'$, then we have
        \begin{displaymath}
        - 12 c(J)  = \frac{2 }{ e } \Gamma \cdot \pi^* \omega_{\cX/S} +   \mathrm{Art}_{\cX/S} - \mu_\cX.
    \end{displaymath}

\end{enumerate}
\end{corollary}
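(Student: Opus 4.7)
The starting identity for each item is the comparison of $\Q$-Cartier divisors on $S'$
\[
-12\,e\,c(J) \;=\; 12\det\omega_{\mathscr{J}'/S'} \;-\; 12\,R'\otimes_R\det\omega_{\mathscr{J}/S},
\]
which follows from Definition~\ref{Def:basechange}, the injection $\kappa$ of free $R'$-modules of rank $g$, and the identification of the Hodge bundle determinant with the Lie algebra determinant via Proposition~\ref{prop:isocohomologyLiealgebra}, combined with the lattice convention of \ref{subsub:comparelattices}. The proof of each item then amounts to substituting the appropriate Deligne--Riemann--Roch formula for each of the two determinants and simplifying.

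For Part (1), since both $\cX$ and $\cX'$ are regular, Grothendieck--Serre duality combined with Proposition~\ref{prop:isocohomologyLiealgebra} yields $\lambda(\calo_{\cX})\simeq \det\omega_{\mathscr{J}/S}$ and analogously for $\cX'$, and the formula is precisely Theorem~\ref{theorem-cond} divided by $e$ after changing sign. For Part (2) I would replace Theorem~\ref{theorem-cond} by the second formula of Proposition~\ref{thm:reftakeshi}, applicable now since $\cX$ is normal and $\Q$-Gorenstein; the $\cX'$-side is as before (no $\nu_{\cX'}$ since $\cX'$ is regular). The main algebraic step is to expand $\langle\omega_{\cX'/S'},\omega_{\cX'/S'}\rangle$ through the decomposition $\omega_{\cX'/S'} = \pi^{\ast}\omega_{\cX/S}+\Gamma$, using bilinearity of the Deligne pairing and Proposition~\ref{prop:relationtointtheory}. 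The pullback self-pairing $\langle\pi^{\ast}\omega_{\cX/S},\pi^{\ast}\omega_{\cX/S}\rangle_{\cX'/S'}$ will match the pullback of $\langle\omega_{\cX/S},\omega_{\cX/S}\rangle_{\cX/S}$ along $g\colon S'\to S$ and cancel the corresponding term from the $\cX$-side, while the cross-term $2\Gamma\cdot\pi^{\ast}\omega_{\cX/S}$, the self-intersection $\Gamma^{2}$, and the Artin and $\nu_{\cX}$ contributions rearrange into the asserted formula.

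Part (3) will be obtained by specializing Part (2). Smoothness of $\cX'$ over $S'$ forces $\Art_{\cX'/S'}=0$, since the smooth connected special fibre has the same $\ell$-adic Euler characteristic as $C_{K'^{s}}$ and good reduction of $C_{K'}$ annihilates the Swan term. Smoothness also implies that $\cX'_{k}$ is irreducible, so any $\Q$-Cartier divisor supported on $\cX'_{k}$ is a rational multiple of $\cX'_{k}$ itself; as the latter is the pullback of the closed point of $S'$ it is principal, hence numerically trivial, so $\Gamma^{2}=0$. The rational singularities hypothesis combined with Corollary~\ref{cor:milnornumber} finally yields $\mu_{\cX}=\nu_{\cX}$. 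The only substantive technical obstacle throughout is the pullback behaviour of the Deligne pairing under the composite $\cX'\to \cX\times_S S'\to \cX$: one must verify that the ramified base change $S'\to S$ contributes precisely a factor of $e$ and that the intermediate birational modification introduces no correction, which is where some care with $R\pi_{\ast}\calo_{\cX'}$ enters. The remainder is bookkeeping with bilinear forms and Artin conductors.
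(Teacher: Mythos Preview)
Your approach coincides with the paper's: Part~(1) directly from Theorem~\ref{theorem-cond}, Part~(2) by the same logic with the second formula of Proposition~\ref{thm:reftakeshi} replacing the regular Deligne--Riemann--Roch on the $\cX$-side, and Part~(3) by specializing Part~(2) via $\Art_{\cX'/S'}=0$, $\Gamma^2=0$ (since $\Gamma$ is a multiple of the special fiber), and $\mu_\cX=\nu_\cX$. One small correction: the identity $\mu_\cX=\nu_\cX$ for rational singularities follows immediately from Definition~\ref{def:Milnornumber} (rationality means $p_{g,P}=0$), rather than from Corollary~\ref{cor:milnornumber}, which concerns only rational double points and quotient singularities.
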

\begin{proof}
    The first part is immediate from Theorem \ref{theorem-cond}. The second one is a consequence of the same logic as the first part, but also utilizing Proposition \ref{thm:reftakeshi}. The third is a special case of the second one and the following facts: We have $\Art_{\cX'/S'} = 0$ for a smooth model,  $\mu_{\cX} = \nu_{\cX}$ for rational singularities, and $\Gamma^2=0$ since $\Gamma$ is necessarily a multiple of the special fiber.
\end{proof}

\section{The tame part of the base change conductor}
In this section we derive formulas for the tame part of the base change conductor of the Jacobian of a curve in terms of the geometry of models of the curve. We let $\cX \to S$ be a regular sncd-model of $C$, with special fiber $\cX_k = \sum_{i \in I} n_i E_i$. 

\subsection{Preliminaries on sncd models}\label{subsec:sncd-prelim}

\subsubsection{} 
Recall that $E(\cX)$ denotes the number of nodes of $ \cX_{k,\mathrm{red}} $. We next introduce a related value, which also takes into account the multiplicites of the components of $\cX_k$. 

\begin{definition}\label{def:virtualnodes}
By the \emph{virtual number of nodes} we mean the value
$$ R(\cX) = \frac{1}{3} \cdot \sum_{x \in \mathrm{Sing} (\cX_{k,\mathrm{red}})} \frac{n_x^2 + n_x'^2 + (n_x,n_x')^2}{n_x n_x'}, $$
where $n_x$ and $n_x'$ denote the multiplicities of the two formal branches of the special fiber crossing transversally at $x$.
\end{definition}

\subsection{The tame part of the Artin conductor } 
We define the tame part of the Artin conductor of $\cX/S$ to be (compare with \eqref{def:Artinconductor})
$$ \mathrm{Art}_{\mathrm{tame}}(\cX) = \chi(\cX_{K^s}) - \chi(\cX_k). $$
It can be computed entirely in terms of the labelled dual graph $ \widetilde{\Gamma}(\cX) $. Indeed, we have
\begin{equation}\label{eq:Eulercharspecialfiber} \chi(\cX_k) = \chi(\cX_{k,red}) = \sum \chi (E_i) - E(\cX) = \sum \chi(E_i) - \sum_{i < j} E_i \cdot E_j,
\end{equation}
and (cf. \cite[Cor.~2.6.3]{JNgeometriccriteria})
$$ \chi(\cX_{K^{s}}) = \sum_i n_i \chi(E_i^{\circ}).$$
Here the intersection product is the one explained in \textsection \ref{subsec:relativeinttheory}.

\begin{lemma}\label{lemma:tameArt}
Let $u$ be unipotent rank of $J$. The following formula holds:
$$ \mathrm{Art}_{\mathrm{tame}}(\cX) = - 2u - E(\cX). $$
\end{lemma}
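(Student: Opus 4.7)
My plan is to compute both Euler characteristics in the definition $\mathrm{Art}_{\mathrm{tame}}(\cX) = \chi(C_{K^s}) - \chi(\cX_k)$ directly, and then invoke the standard description of the abelian and toric ranks of the N\'eron model of $J$ in terms of the combinatorics of $\cX$. Since $C$ is smooth projective of genus $g$, we have $\chi(C_{K^s}) = 2 - 2g$. For the special fiber, applying formula \eqref{eq:Eulercharspecialfiber} together with $\chi(E_i) = 2 - 2g(E_i)$ (since each $E_i$ is a smooth projective curve) yields $\chi(\cX_k) = 2V(\cX) - 2\sum_i g(E_i) - E(\cX)$, so
\[\mathrm{Art}_{\mathrm{tame}}(\cX) = 2 - 2g - 2V(\cX) + 2\sum_i g(E_i) + E(\cX).\]

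The remaining step is to rewrite $g$ using the combinatorics of the sncd-model. I would invoke Raynaud's theorem (cf.~\cite[Ch.~9]{neron}) which identifies $\mathrm{Pic}^0_{\cX/S}$ with the identity component of the N\'eron model of $J$, with special fiber $\mathrm{Pic}^0(\cX_k)$. The canonical surjection $\mathrm{Pic}^0(\cX_k) \twoheadrightarrow \mathrm{Pic}^0(\cX_{k,\mathrm{red}})$ has unipotent kernel (arising from $1+\mathcal{N}_{\cX_k}$, killed by $H^2$-vanishing on a curve), so the abelian and toric ranks of $J$ agree with those of $\mathrm{Pic}^0(\cX_{k,\mathrm{red}})$. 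Since $\cX_{k,\mathrm{red}}$ is a nodal curve with smooth components $E_i$ and dual graph $\Gamma(\cX)$, its Picard scheme sits in a canonical extension of $\prod_i \mathrm{Pic}^0(E_i)$ by a torus of dimension $b_1(\Gamma(\cX)) = 1 - V(\cX) + E(\cX)$. Reading off ranks yields $a = \sum_i g(E_i)$ and $t = b_1(\Gamma(\cX))$.

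Combining with $a+t+u = g$ we obtain $\sum_i g(E_i) = g - u - 1 + V(\cX) - E(\cX)$. Substituting into the displayed formula for $\mathrm{Art}_{\mathrm{tame}}(\cX)$ and simplifying gives the desired value $-2u - E(\cX)$. The only subtle point I anticipate is verifying that the identification of $a$ and $t$ via the reduced structure remains valid when $\cX_k$ is non-reduced; the arithmetic itself is routine once this input is in place.
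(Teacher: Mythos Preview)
Your proof is correct and follows essentially the same approach as the paper: both compute $\chi(C_{K^s})=2-2g$ and $\chi(\cX_k)$ via \eqref{eq:Eulercharspecialfiber}, then invoke $a=\sum_i g(E_i)$, $t=b_1(\Gamma(\cX))$, and $g=a+t+u$ to finish. The only difference is that you spell out the justification for the identifications of $a$ and $t$ (via Raynaud's theorem and the Picard scheme of the reduced nodal curve), whereas the paper simply asserts these as known facts.
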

\begin{proof}
The toric rank $t$ equals the first Betti number of the dual graph of $ \cX $ (see \eqref{eq:betti1 }), and the abelian rank $a$ equals the sum of the genera of the irreducible components $E_i$ of the special fiber. We moreover have $g = a+t+u$. Combining with \eqref{eq:Eulercharspecialfiber} we therefore find the sequence of identities
$$ \mathrm{Art}_{\mathrm{tame}}(\cX) = \chi(\cX_{K^{s}}) - \chi(\cX_{k}) = $$
$$ (2 - 2g) - \left( \sum_{i \in \mathcal{I}} (2-2 g(E_i)) - E(\cX) \right)= $$
$$ 2(1 - V(\cX) + E(\cX)) + 2(\sum_i g(E_i) - g) - E(\cX) = $$
$$ 2(t + a - g) - E(\cX) = -2u - E(\cX). $$
\end{proof}

\subsection{The formula for the base change conductor for Jacobians}

We will exploit the second formula in Proposition \ref{thm:reftakeshi} to find a formula for the base change conductor. 
\begin{prop}\label{cor-main}
Let $C$ and $\cX/S$ be as at the start of this section, and assume in addition that $(p,n_i) = 1$ for all $ i \in I $. Then
$$ c(J) = - \frac{1}{4} \cdot (\mathrm{Art}_{\cX/S} + R(\cX)). $$
\end{prop}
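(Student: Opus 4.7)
My plan is to apply Corollary \ref{cor:basechconformula}\eqref{item1:corbasechconformula} and reduce the claim to an explicit local computation at each node of $\cX_{k,\mathrm{red}}$. Under the hypothesis $(p,n_i)=1$, the curve $C$ acquires semistable reduction over the unique tame totally ramified extension $K'/K$ of degree $e=\lcm_i n_i$; I would take $\pi\colon \cX'\to \cX\times_R R'$ to be the minimal desingularization of the normalization, so that $\cX'$ is semistable. By Lemma \ref{lemma:tameArt} and the vanishing of the unipotent rank of $\Jac(C_{K'})$, we have $\Art_{\cX/S}=-2u-E(\cX)$ and $\Art_{\cX'/S'}=-E(\cX')$. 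Substituting these into Corollary \ref{cor:basechconformula}\eqref{item1:corbasechconformula}, the statement reduces to the intersection theoretic identity
\begin{equation*}
\Gamma^2 + 2\Gamma\cdot\pi^*\omega_{\cX/S} + E(\cX') \;=\; -4eu - 2eE(\cX) + 3eR(\cX).
\end{equation*}

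Next I would make $\Gamma$ explicit through the log-regular structures on $\cX$ and $\cX'$ coming from their reduced special fibers. In the tame situation $\pi$ is log \'etale, so $\omega^{\log}_{\cX'/S'}=\pi^*\omega^{\log}_{\cX/S}$; combined with \eqref{eq:logcanonical}, semistability of $\cX'$, and $\pi^*\cX_k=e\cX'_k$, this yields the clean expression $\Gamma=\pi^*\cX_{k,\mathrm{red}}-e\cX'_k$. The intersections $\Gamma^2$ and $\Gamma\cdot\pi^*\omega_{\cX/S}$ can then be reduced to data on $\cX$ using the projection formula together with the observation that $\cX'_k$ is principal (so its pairings with vertical divisors vanish, and $\cX'_k\cdot\pi^*L=\cX_k\cdot L$ for any line bundle $L$). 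Expanding the resulting downstairs intersections via adjunction, both for the individual components $E_i$ and for the whole fiber $\cX_k$, together with the consequent identity $\sum_i E_i^2=-\sum_x(n_x/n_x'+n_x'/n_x)$ and the substitutions $g=a+t+u$ and \eqref{eq:betti1}, the target identity collapses to the purely local claim
\begin{equation*}
E(\cX') \;=\; e \sum_{x\in\mathrm{Sing}(\cX_{k,\mathrm{red}})} \frac{(n_x,n_x')^2}{n_x n_x'}.
\end{equation*}

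The main obstacle is therefore the verification of this local claim: each node $x$ of $\cX_{k,\mathrm{red}}$ contributes exactly $e(n_x,n_x')^2/(n_x n_x')$ nodes to the special fiber $\cX'_k$. Formally near $x$ the model $\cX$ is given by $u^{n_x}v^{n_x'}=\pi$; after base changing by $\pi=(\pi')^e$ and normalizing one obtains $d=(n_x,n_x')$ isomorphic tame cyclic quotient singularities, each resolved by a Hirzebruch-Jung chain of smooth rational curves whose length is read off from the classical continued fraction expansion attached to $(n_x,n_x',e)$. A direct toric computation then shows that each such chain produces exactly $e/\lcm(n_x,n_x')$ new nodes in $\cX'_k$; using $n_x n_x'=(n_x,n_x')\cdot\lcm(n_x,n_x')$, summing the contributions of the $d$ isomorphic chains above $x$ gives precisely the required local count. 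Summing over all nodes of $\cX_{k,\mathrm{red}}$ then completes the proof.
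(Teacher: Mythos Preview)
Your proof is correct and follows essentially the same approach as the paper's own argument: both apply Corollary~\ref{cor:basechconformula}\eqref{item1:corbasechconformula} to the tame base change of degree divisible by all $n_i$, use the log-\'etale identity $\pi^*\omega^{\log}_{\cX/S}=\omega^{\log}_{\cX'/S'}$ to compute $\Gamma$, reduce the intersection numbers to $\cX$ via the projection formula and adjunction, and identify the contribution of $\Art_{\cX'/S'}$ through the explicit local description (from \cite{Halle-stable}) of the normalized base change at each node as a union of $A_n$-singularities. Your organization differs only cosmetically: you invoke Lemma~\ref{lemma:tameArt} early (implicitly using $\mathrm{Sw}(C)=0$ under the tameness hypothesis) and isolate the clean local identity $E(\cX')=e\sum_x (n_x,n_x')^2/(n_x n_x')$ before verifying it, whereas the paper carries the Euler characteristics through the computation and simplifies at the end.
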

\begin{proof} 
First of all, note that our assumption implies that $C$ is tamely ramified. We fix an extension $K'/K$ (of sufficiently divisible degree) that realizes semi-stable reduction for $C$. We denote the composition of normalized base change $ \widetilde{\cX} \to \cX $ and minimal desingularization $ \cX' \to \widetilde{\cX} $ by $ \pi \colon \cX' \to \cX $. Then 
$$ \pi^* \omega^{\mathrm{log}}_{\cX/S} = \omega^{\mathrm{log}}_{\cX'/S'}, $$
where one can take as definition of the logarithmic canonical bundles the expression in \eqref{eq:logcanonical}. To see this, use that the logarithmic differentials pull back to the logarithmic differentials along the pullback to the normalized base change, cf. \cite[3.3.5]{logjumps}.   Secondly, since $\widetilde{\cX}_k$ is reduced,  $\omega^{\mathrm{log}}_{\widetilde{\cX}/S'} = \omega_{\widetilde{\cX}/S'}$ and the minimal desingularization is a crepant resolution.  

From this identity, one easily derives that $\Gamma = \pi^* (\cX_{k, \mathrm{red}} - \cX_{k})$, since $\cX'/S'$ is semi-stable. It follows that
$$ \Gamma^2 = (\pi^* \cX_{k, \mathrm{red}})^2 =  e \cdot \cX_{k,\mathrm{red}}^2$$
and
$$ \Gamma \cdot \pi^* \omega_{\cX/S} = e \cdot \cX_{k, red} \cdot \omega_{\cX/S} + e \cdot \chi(\cX_{K^{s}}).$$
%Since the component $E_i$ of $\cX_{s,\mathrm{red}}$ is smooth, by adjunction (cf. \cite{SGA7-2}, Exp. X, Proposition 1.11) the $\ell$-adic Euler characteristic satisfies 
By adjunction $\chi(E_i) = -E_i^2 - E_i \cdot \omega_{\cX/S}$, hence
$$ \cX_{k, \mathrm{red}} \cdot \omega_{\cX/S} =  \sum_i \left( - \chi(E_i) - E_i^2 \right).$$
It follows that
\begin{eqnarray*} e^{-1} \left( \Gamma^2 + 2\Gamma \cdot \pi^* \omega_{\cX/S} \right) =\\
2 \sum_{i < j} E_i \cdot E_j - \sum_i E_i^2 - 2 \sum_i  \chi(E_i) + 2 \chi(\cX_{K^{s}}).
\end{eqnarray*}

Using that $ \cX_k = \sum n_i E_i$ has trivial intersection with any divisor in the special fiber it follows that
$$E_i^2 = - \sum_{i \neq j} \frac{n_j}{n_i} E_i \cdot E_j$$
so that
$$\sum E_i^2 = - \sum_{i < j} \frac{n_i^2 + n_j^2}{n_i n_j} E_i \cdot E_j.$$

Now we compute $\mathrm{Art}_{\cX'/S'}$. In order to do this, we use the explicit description provided in \cite{Halle-stable} of the natural map $ p \colon \widetilde{\cX} \to \cX$ and the minimal desingularization $ \rho \colon \cX' \to \widetilde{\cX} $.
For each $E_i$, let us denote by $\widetilde{E_i}^{\circ}$ the inverse image of $E_i^{\circ}$ under $p$. Then the induced map
$$ \widetilde{E_i}^{\circ} \to E_i^{\circ} $$
is \'etale of degree $n_i$, hence
$$ \chi(\widetilde{E_i}^{\circ}) = n_i \chi(E_i^{\circ}). $$

Consider now a point $ x \in E_i \cap E_j $, where $ i \neq j $. The inverse image of $ x $ under $p$ consists of $ (n_i,n_j) $ distinct points, each of them being a transversal intersection of distinct branches of $\widetilde{\cX}_k$. Moreover, the formal structure of $\widetilde{\cX}$ at any of these points is that of an $A_{n_x}$ singularity, where
$$ n_x = \frac{e \cdot (n_i,n_j)}{n_i n_j}. $$
The exceptional locus of this singularity consists of a chain of $n_x - 1 $ smooth rational curves $F_{\alpha}$. In particular, we find that $ \chi(F_{\alpha}^{\circ}) = 0 $ and that each of the $ (n_i,n_j) $ preimages of $x$ give rise to $n_x$ singular points in the special fiber of $\cX'$. By Lemma \ref{lemma:tameArt} we conclude that
$$ \mathrm{Art}_{\cX'/S'} = - e \cdot \sum_{i < j} (E_i \cdot E_j) \frac{(n_i,n_j)^2}{n_i n_j}. $$

Now we use Theorem  \ref{theorem-cond}. After dividing by $ e $, we find that
$$ - 12 \cdot c(J) = $$
$$ 2 \sum_{i < j} E_i \cdot E_j - \sum_i E_i^2 - 2 \sum_i  \chi(E_i) + 2 \chi(\cX_{K^{s}}) + $$
$$ \sum_{i < j} (E_i \cdot E_j) \frac{(n_i,n_j)^2}{n_i n_j} + \chi(\cX_{K^{s}}) - \chi(\cX_k) = $$
$$ 3 \left( \chi(\cX_{K^{s}}) - \chi(\cX_k) \right) + \sum_{i < j} (E_i \cdot E_j) \frac{n_i^2 + n_j^2 + (n_i,n_j)^2}{n_i n_j}, $$
which easily yields the desired formula.

\end{proof}

\subsubsection{} In fact, Proposition \ref{cor-main} holds for \emph{any} curve $C/K$ and \emph{any} strict normal crossings model $\cX/S$ of $C$, provided that we replace $c(\mathrm{Jac}(C))$ and $\mathrm{Art}_{\cX/S}$ by their tame counterparts $c_{\mathrm{tame}}(J)$ and $\mathrm{Art}_{\mathrm{tame}}(\cX)$. Indeed, our next result shows that the formula can be "transported" from characteristic zero.

\begin{theorem}\label{thm:tameformula}
Let $\cX/S$ be an $sncd$-model of $C$. Then
$$ c_{\mathrm{tame}}(J) = - \frac{1}{4} \cdot \left( \mathrm{Art}_{\mathrm{tame}}(\cX) + R(\cX) \right).$$
\end{theorem}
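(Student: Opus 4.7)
The plan is to reduce the statement to the already-proved Proposition \ref{cor-main} by showing that both sides of the asserted formula depend only on the labelled dual graph $\widetilde{\Gamma}(\cX)$, and then to transport the equality from an equal-characteristic-zero model with the same labelled dual graph, where the hypothesis $(p,n_i)=1$ is automatic.

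First I would verify that the right-hand side depends only on $\widetilde{\Gamma}(\cX)$. The virtual node count $R(\cX)$ is manifestly a function of the multiplicities along the edges of $\Gamma(\cX)$, so it depends only on $\Gamma(\cX)$ equipped with the multiplicity labels. For the tame part of the Artin conductor, Lemma \ref{lemma:tameArt} gives $\mathrm{Art}_{\tame}(\cX)=-2u-E(\cX)$, and $E(\cX)$ is read off from $\Gamma(\cX)$ directly while the unipotent rank $u=g-a-t$ is determined by the labelled dual graph since $g$, the abelian rank $a=\sum g(E_i)$ and the toric rank $t=b_1(\Gamma(\cX))$ are all combinatorial invariants of $\widetilde{\Gamma}(\cX)$.

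Next I would invoke the main result of \cite{logjumps} (which the authors recall in the introduction: $c_\tame(J)$ equals the sum of Edixhoven's jumps, and each jump has an explicit combinatorial formula in terms of $\widetilde{\Gamma}(\cX)$) to conclude that the left-hand side $c_\tame(J)$ likewise depends only on $\widetilde{\Gamma}(\cX)$. Hence the validity of the identity
\[
c_\tame(J)=-\tfrac14\bigl(\mathrm{Art}_\tame(\cX)+R(\cX)\bigr)
\]
is a statement about labelled dual graphs.

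Then I would apply Winters' theorem \cite{winters}: there exist a complete discretely valued field $K_0$ of residue characteristic $0$, a smooth projective geometrically connected curve $C_0$ over $K_0$, and an $sncd$-model $\cX_0$ of $C_0$ over the ring of integers $R_0$, whose labelled dual graph is isomorphic to $\widetilde{\Gamma}(\cX)$. Since $\mathrm{char}\,k(R_0)=0$, the coprimality hypothesis $(p,n_i)=1$ is vacuously satisfied, $C_0$ is automatically tamely ramified (so that $c_\tame(\mathrm{Jac}(C_0))=c(\mathrm{Jac}(C_0))$), and similarly $\mathrm{Art}_\tame(\cX_0)=\mathrm{Art}_{\cX_0/S_0}$ because there is no Swan contribution.

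Finally I would apply Proposition \ref{cor-main} to $\cX_0$ to conclude
\[
c(\mathrm{Jac}(C_0))=-\tfrac14\bigl(\mathrm{Art}_{\cX_0/S_0}+R(\cX_0)\bigr),
\]
which by the combinatorial interpretation of all three quantities is exactly the desired identity for $\cX$. The main obstacle is purely bookkeeping: checking carefully that each of the three invariants $c_\tame(J)$, $\mathrm{Art}_\tame(\cX)$, $R(\cX)$ really is a function of $\widetilde{\Gamma}(\cX)$ alone, so that the comparison with the Winters lift is legitimate; I do not expect any serious analytic or geometric difficulty beyond quoting \cite{winters} and \cite{logjumps}.
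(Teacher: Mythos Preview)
Your proposal is correct and follows essentially the same route as the paper: both arguments observe that all three quantities $c_{\tame}(J)$, $\mathrm{Art}_{\tame}(\cX)$ and $R(\cX)$ depend only on the labelled dual graph, then invoke Winters' theorem to transport the identity from equal characteristic zero, where Proposition~\ref{cor-main} applies directly. The only cosmetic differences are that the paper cites \cite{Halle-neron} and \cite{HaNi} rather than \cite{logjumps} for the combinatorial nature of $c_{\tame}(J)$, and works specifically over $\mathbb{C}((t))$ rather than an unspecified residue-characteristic-zero field.
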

\begin{proof}
By \cite{Halle-neron} and \cite{HaNi}, the value $c_{\mathrm{tame}}(\mathrm{Jac}(C))$ only depends on the labelled dual graph $\widetilde{\Gamma}(\cX)$, and we have already observed that the same is true for $\mathrm{Art}_{\mathrm{tame}}(\cX)$.

By a result of Winters \cite{winters}, we can find a smooth geometrically connected curve $B/\mathbb{C}((t))$ and an $sncd$-model $ \mcr Y $ of $B$ such that
$$ \widetilde{\Gamma}(\cX) = \widetilde{\Gamma}(\mcr Y). $$
This implies that
$$ c_{\mathrm{tame}}(\mathrm{Jac}(C)) =  c_{\mathrm{tame}}(\mathrm{Jac}(B)) = c(\mathrm{Jac}(B)) $$
and that
$$ \mathrm{Art}_{\mathrm{tame}}(\cX) = \mathrm{Art}_{\mathrm{tame}}(\mcr Y) = \mathrm{Art}_{\mcr Y/\mathbb{C}[[t]]}. $$
Obviously, we also have that $ R(\cX) = R(\mcr Y) $. Now the result follows from Proposition \ref{cor-main}, which states that
$$ c(\mathrm{Jac}(B)) = - \frac{1}{4} \cdot \left( \mathrm{Art}_{\mcr Y/\mathbb{C}[[t]]} + R(\mcr Y) \right). $$
\end{proof}

We record also a different formulation of the above formula, which follows immediately from combining Theorem \ref{thm:tameformula} and Lemma \ref{lemma:tameArt}.

\begin{corollary}\label{cor:nodediff}
Let $C$ and $ \cX $ be as above. Then
$$ c_{\mathrm{tame}}(J) = \frac{u}{2} - \frac{1}{4} \cdot \left( R(\cX) - E(\cX) \right). $$
\end{corollary}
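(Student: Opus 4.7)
The plan is straightforward: the corollary is a direct algebraic consequence of the two immediately preceding results, Theorem \ref{thm:tameformula} and Lemma \ref{lemma:tameArt}. There is no new geometric or intersection-theoretic input required.

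First I would invoke Theorem \ref{thm:tameformula}, which gives
\[ c_{\mathrm{tame}}(J) = -\tfrac{1}{4}\bigl(\mathrm{Art}_{\mathrm{tame}}(\cX) + R(\cX)\bigr). \]
Next I would substitute Lemma \ref{lemma:tameArt}, which computes the tame Artin conductor of the $sncd$-model as $\mathrm{Art}_{\mathrm{tame}}(\cX) = -2u - E(\cX)$, where $u$ is the unipotent rank of the N\'eron model of $J$. Plugging this into the preceding display yields
\[ c_{\mathrm{tame}}(J) = -\tfrac{1}{4}\bigl(-2u - E(\cX) + R(\cX)\bigr) = \tfrac{u}{2} - \tfrac{1}{4}\bigl(R(\cX) - E(\cX)\bigr), \]
which is the desired formula.

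There is no genuine obstacle here: the whole content has already been packaged into Theorem \ref{thm:tameformula} (whose proof uses Winters's theorem to reduce to the equal characteristic zero case, where Proposition \ref{cor-main} applies) and Lemma \ref{lemma:tameArt} (a bookkeeping identity relating Euler characteristics of generic and special fibers to the invariants $u$, $t$, $a$, and the combinatorics of $\widetilde{\Gamma}(\cX)$). The role of the corollary is purely expository: it repackages the tame Artin conductor in terms of the topologically natural quantities $u$ and $E(\cX)$, isolating the combinatorial defect $R(\cX) - E(\cX)$ which vanishes precisely when the multiplicity data at every node is trivially balanced, and makes explicit the obstruction statements discussed in the introduction.
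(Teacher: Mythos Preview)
Your proposal is correct and matches the paper's own argument exactly: the paper states that the corollary ``follows immediately from combining Theorem \ref{thm:tameformula} and Lemma \ref{lemma:tameArt},'' which is precisely the substitution you carry out.
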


\begin{remark}
The term $R(\cX) - E(\cX) $ does not depend on $\cX$, but can be hard to control in general -- explicit examples show that it can take both positive and negative values. For instance, if $\cX$ is the minimal snc-model of an elliptic curve with reduction type $IV$, one computes that $R(\cX) - E(\cX) = \frac{2}{3}$, whereas for reduction type $IV^*$ the value is $- \frac{2}{3}$.

\end{remark}

\subsection{An inequality}

All the terms appearing in Theorem  \ref{thm:tameformula} have a definite sign. This is clear except possibly for $\mathrm{Art}_{\tame}(\cX)$, for which it holds that $$-\mathrm{Art}_{\tame}(\cX)\geq 0.$$ The above inequality is well-known, and follows from the fact that, in this setting, $H^1(\cX_k, \mathbb{Q}_\ell) = H^1(\cX_{K^s}, \mathbb{Q}_\ell)^{I}$, so $b_1(\cX_k) \leq b_1(\cX_{K^s})$.

Hence the formula in  question in particular entails the inequality 
\begin{equation}\label{eq:inequality1} \frac{1}{4} \left(-\mathrm{Art}_{\tame}(\cX)\right)- c_{\tame}(J)  = \frac{1}{4} R(\cX)  > 0.
\end{equation}

This type of inequalities were studied \cite{spectralgenus}, in the setting of  germs of isolated complex hypersurface singularities, determined by $f: (\mathbb{C}^2,0) \to (\mathbb{C},0).$ We suppose in the remainder of this paragraph that $ 0 \in f^{-1}(0) $ is actually a singularity. This germ can be globalized to a family of hypersurfaces in $\mathbb{P}^2$, $\cX \to (\mathbb{C},0)$,  with a single isolated singular point over the origin, so it is actually of algebraic nature. 

The inequality in question is given in terms of $\mu$, the Milnor number of $f$, and the so called spectral genus $\widetilde{p}_g$ of the  singularity. For families of hypersurfaces, one has $$\mu = -\Art_{\tame}(\cX), \widetilde{p}_g = c_{\tame}(J).$$ 
The result in \cite[Theorem B]{spectralgenus} provides us with the inequality
\begin{equation}\label{eq:spectralgenus}
    \frac{1}{6}\left(-\Art_{\tame}(\cX)\right) - c_{\tame}(J) >  0.
\end{equation}

This is of the same type as  $\eqref{eq:inequality1}$, but the latter is for an isolated singularity. In what follows, we show how it provides an inequality for the Milnor number in terms of the combinatorics  of  the resolution graphs. For this we need to compare with a normal crossings model. We pick any embedded resolution with normal crossings $\cX' \to \cX$, and denote by $E$ the exceptional divisor. Then $c_{\tame}(J)$ remains unchanged, but 
$$-\Art_{\tame}(\cX') = \mu+  e,$$
where $e :=  \#\hbox{irreducible components } E$.  As in \eqref{eq:betti1}, we  find that
$$e = E(\cX') + 1-t-n$$
where $n$ is the number of irreducible components of $\cX_k$, and $t$ is the first Betti number of the dual graph, or likewise the toric rank. 

Hence Theorem \ref{thm:tameformula} can be written as: 
\begin{equation}\label{eq:ctame}
    c_{\tame}(J) =  \frac{1}{4} \left( \mu + E(\cX') -R(\cX') + 1-t-n\right).
\end{equation}
Inserting \eqref{eq:ctame} into the inequality \eqref{eq:spectralgenus}, and using the definition of $R(\cX')$ in Definition \ref{def:virtualnodes}, we find the following: 
\begin{prop}  Suppose that $f: \cX \to \Spec \mathbb{C}[[t]]$ is a regular model of a smooth curve, with a single isolated singularity in the special fiber. With the notation as above, we have the strict inequality:

$$\mu <  \sum_{x \in \mathrm{Sing} (\cX'_{k,\mathrm{red}})} \left(\frac{n_x^2 + n_x'^2 + (n_x,n_x')^2}{n_x n_x'}-3\right) +  3 (t+n-1).$$
\end{prop}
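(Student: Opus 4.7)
The plan is to combine the spectral-genus inequality \eqref{eq:spectralgenus} with the formula \eqref{eq:ctame} for $c_{\tame}(J)$ derived just above, and then perform a direct algebraic manipulation. Nothing new needs to be proved; the work consists entirely in substitution and rearrangement, since the singularity-theoretic input from \cite{spectralgenus} has already been translated into the identification $\mu=-\Art_{\tame}(\cX)$ and $\widetilde{p}_g=c_{\tame}(J)$.

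First, I would rewrite \eqref{eq:spectralgenus} as the strict inequality
\[
c_{\tame}(J) \;<\; \tfrac{1}{6}\mu.
\]
Next I would plug in the formula \eqref{eq:ctame}, namely
\[
c_{\tame}(J)=\tfrac{1}{4}\bigl(\mu+E(\cX')-R(\cX')+1-t-n\bigr),
\]
which was obtained by passing from $\cX$ to a nc-resolution $\cX'$ (recall this leaves $c_{\tame}(J)$ unchanged and converts $\mu$ into $-\Art_{\tame}(\cX')=\mu+e$, with $e=E(\cX')+1-t-n$). Substituting gives
\[
\tfrac{1}{4}\bigl(\mu+E(\cX')-R(\cX')+1-t-n\bigr) \;<\; \tfrac{1}{6}\mu.
\]

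Clearing denominators (multiplying by $12$) and isolating $\mu$ yields
\[
\mu \;<\; 3\bigl(R(\cX')-E(\cX')\bigr) + 3(t+n-1).
\]
Finally, I would insert the definition of $R(\cX')$ from Definition \ref{def:virtualnodes}: the factor $3$ cancels the $1/3$ in $R(\cX')$, so
\[
3\bigl(R(\cX')-E(\cX')\bigr)=\sum_{x\in\mathrm{Sing}(\cX'_{k,\red})}\!\left(\frac{n_x^{2}+n_x'^{2}+(n_x,n_x')^{2}}{n_x n_x'}-3\right),
\]
using that $E(\cX')$ is the cardinality of the set of singular points. This produces the claimed inequality.

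No step is a genuine obstacle; the only thing to be careful about is bookkeeping between the singular model $\cX$ (on which $\mu=-\Art_{\tame}(\cX)$) and its embedded nc-resolution $\cX'$ (on which the formula \eqref{eq:ctame} is written). Since $c_{\tame}(J)$ depends only on the Jacobian, both inequalities concern the same quantity, and the comparison of Artin conductors was already recorded just before \eqref{eq:ctame}. The strictness of the final inequality is inherited directly from the strict inequality \eqref{eq:spectralgenus}.
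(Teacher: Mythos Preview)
Your proposal is correct and follows essentially the same approach as the paper: substitute \eqref{eq:ctame} into the strict inequality \eqref{eq:spectralgenus}, clear denominators, and unpack the definition of $R(\cX')$. The algebra you wrote out is exactly the computation the paper alludes to in the sentence preceding the proposition.
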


We remark that even stronger inequalities than \eqref{eq:spectralgenus} are known for many models of curves, and are expected more generally. We refer to \cite{spectralgenus} for a further discussion. This bound seems to resist being classified as merely a formal estimate.

One could consider the same type of possible inequality over a complete DVR not necessarily of characteristic 0. The inequality in the proposition does not seem to be transportable from characteristic $0$ via a Winters' type argument as in the  proof of Theorem \ref{thm:tameformula}. It hence remains an intriguing open question whether this inequality extends to characteristic $p$.

\subsection{Chai and Yu's formula}\label{subsec-potmult}

We will now demonstrate how Chai and Yu's formula for the base change conductor of an algebraic torus \cite{chai-yu} can be used to show that the term $R(\cX) - E(\cX) $ vanishes in many cases of interest.

\subsubsection{} We denote by $ T_{\ell}(J) $ the $\ell$-adic Tate module of $J = \mathrm{Jac}(C)$, and we put $ V_{\ell}(J) = T_{\ell}(J) \otimes \mathbb{Q}_{\ell} $. The inertia group $ I = \mathrm{Gal}(K^{s}/K) $ acts on $ V_{\ell}(J) $, we write $V$ for the semi-simplification of this representation. 

\subsubsection{} 
Recall that there exists a semi-abelian $K$-variety $E$ such that $J^{an}$ is uniformized by $E^{an}$. I.e., there exists an exact sequence  
$$ 0 \to M \to E^{\mathrm{an}} \to J^{\mathrm{an}} \to 0 $$
with $M$ a $K$-lattice of rank equal to the dimension of the maximal subtorus $T$ of $E$.

We denote by $\mcr E$ the N\'eron $\mathrm{lft}$-model of $E$ . Then  $\mcr E_k^{0} $ is isomorphic to $\mcr J_k^{0}$, which implies that the abelian, toric and unipotent ranks of $J$ and $E$ coincide.

\subsubsection{}
Let us write $B = E/T$ for the abelian part of $E$. If $B$ is trivial, the main result in \cite{chai-yu} implies that
$$ c(J) = \frac{1}{4} \cdot \mathrm{Art}(V). $$
The formula continues to hold in the more general case where $B$ has good reduction over $R$ by \cite[Prop.~7.8]{chai}. Since $ \mathrm{dim}(V) - \mathrm{dim}(V^I) = 2 u$, Chai and Yu's formula can also be written as
$$ c(J) = \frac{u}{2} + \frac{1}{4} \cdot \mathrm{Sw}(V). $$

\begin{theorem}\label{thm-main2}
Assume that $ B $ has good reduction over $R$, and let $\cX/S$ be an arbitrary $sncd$-model of $C$. Then
\begin{enumerate}

\item $ c_{\tame}(J) =  \frac{u}{2} $.

\item $ E(\cX) = R(\cX) $.
\end{enumerate}
\end{theorem}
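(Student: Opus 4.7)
The plan is to combine Chai and Yu's formula, recalled immediately before the theorem, with the combinatorial formula from Corollary \ref{cor:nodediff}. Under the assumption that $B$ has good reduction, Chai and Yu give
\[ c(J) = \frac{u}{2} + \frac{1}{4}\mathrm{Sw}(V), \]
while Corollary \ref{cor:nodediff} yields
\[ c_{\tame}(J) = \frac{u}{2} - \frac{1}{4}\bigl( R(\cX) - E(\cX) \bigr). \]
Together with the decomposition $c(J) = c_{\tame}(J) + c_{\wild}(J)$ into non-negative parts, these two formulas show that claims (1) and (2) are equivalent: either one implies the other by arithmetic. Hence it suffices to establish (1).

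For part (1), my plan is to invoke the refined tame/wild decomposition from \cite{HaNi-book} together with Chai's reduction \cite[Prop.~7.8]{chai} of the base change conductor of $J$ to that of the toric part $T$ of the semi-abelian variety $E$ uniformizing $J$. Since $c(B) = 0$ when $B$ has good reduction, the rigid-analytic uniformization $0 \to M \to E^{\an} \to J^{\an} \to 0$ yields $c(J) = c(T)$, and by naturality of the tame/wild splitting also $c_{\tame}(J) = c_{\tame}(T)$ and $c_{\wild}(J) = c_{\wild}(T)$. For the torus $T$, Chai and Yu's formula reads $c(T) = \frac{1}{4} \mathrm{Art}(V)$, and this naturally decomposes along
\[ \mathrm{Art}(V) = \bigl( \dim V - \dim V^I \bigr) + \mathrm{Sw}(V) = 2u + \mathrm{Sw}(V), \]
with the two summands corresponding respectively to the tame and wild refinements. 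One thus obtains $c_{\tame}(T) = u/2$ and $c_{\wild}(T) = \frac{1}{4}\mathrm{Sw}(V)$, giving (1).

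Part (2) then follows at once by substituting $c_{\tame}(J) = u/2$ into the identity of Corollary \ref{cor:nodediff}.

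The main obstacle will be to rigorously justify that the tame/wild decomposition of \cite{HaNi-book} is compatible with Chai's reduction $c(J) = c(T)$ and with the natural tame/wild splitting of the Artin conductor in Chai–Yu's formula for tori. Both refinements ought to be determined solely by the action of the wild inertia on the character lattice $X^{\ast}(T) \otimes \mathbb{Q}_\ell$; alternatively, one could pass to a sufficiently large tame Galois extension $K'/K$, over which $B_{K'}$ still has good reduction, so that Chai–Yu continues to apply, and then compare the resulting formulas by tracking how $c_{\tame}$ and $c_{\wild}$ scale under the tame base change.
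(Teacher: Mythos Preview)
Your approach is essentially the same as the paper's: reduce the computation of $c_{\tame}(J)$ to that of the uniformizing semi-abelian variety (or its toric part), invoke the Chai--Yu formula there, and then derive (2) from (1) via Corollary \ref{cor:nodediff}. The paper handles your acknowledged obstacle not by a workaround but by citing two specific results from \cite{HaNi-book}: Proposition 6.2.3.2 gives $c(J)=c(E)$ and $c_{\tame}(J)=c_{\tame}(E)$ directly (the compatibility with the rigid uniformization that you were seeking), and Proposition 7.1.1.3 gives $c_{\tame}(E)=u(E)/2$, stated for tori but with an argument that extends verbatim when $B$ has good reduction. With these two citations in hand your proof is complete, and there is no need for your proposed tame-base-change detour or for arguing that the Artin/Swan splitting of Chai--Yu's formula matches the abstract tame/wild decomposition.
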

\begin{proof}
Observe first that $c(J) = c(E) $ and $ c_{\tame}(J) = c_{\tame}(E)$ by \cite[Prop.~6.2.3.2]{HaNi-book}. Moreover, by \cite[Prop.~7.1.1.3]{HaNi-book}, we have $ c_{\tame}(E) = \frac{u(E)}{2} $. To be precise, the latter result is formulated under the assumption that $E$ is a torus, but the argument generalizes immediately to the case where $B$ has good reduction over $R$. Thus, as $ u(J) = u(E) $, (1) follows. Statement (2) is immediate from (1), by Corollary \ref{cor:nodediff}.

\end{proof}

\begin{remark}
Part (2) of the above theorem gives an interesting \emph{necessary} condition for having potential purely multiplicative reduction.
\end{remark}

\section{The quotient construction}

In this section $C$ denotes a smooth projective and geometrically integral curve over $K$. We assume there exists a finite Galois extension $K'/K$ such that $ C \times_K K' $ admits a $G = \mathrm{Gal}(K'/K)$-equivariant smooth and proper model $\mathscr{Z}$ over the integral closure of $R$ in $K'$. 

Our goal is to establish some fundamental properties for the quotient map $\mathscr{Z} \to \mathscr{Y} := \mathscr{Z}/G $, that will be used frequently in later sections. 

If either $g(C) \geq 2 $ or $C$ is an elliptic curve, we always take $K'/K$ to be the \emph{minimal} extension over which $C$ acquires good reduction. However, we avoid making this an assumption, as some of the results in this section are also needed when $C \cong \mathbb{P}^1_K$ (and hence a smooth proper model exists over $S$) and $K'/K $ is non-trivial.

\subsection{Preliminaries}

We recall that $ K$ is a complete discretely valued field with ring of integers $R$ and algebraically closed residue field $k$. We fix a uniformizer $\varpi $ and denote by $p \geq 0$ the residue characteristic. We put $S = \mathrm{Spec}(R)$.

\subsubsection{} We let $K'/K$ be a finite Galois extension of degree $ e = [K':K] $ with group $G$. We denote by $R'$ the integral closure of $R$ in $K'$ and set $ S' = \mathrm{Spec}(R')$. 
\subsubsection{}
We denote by $ \varpi $ a uniformizer of $R$ and $ \varpi'$ a uniformizer of $R'$, so that $ \varpi = u (\varpi')^e $ for some unit $ u \in R' $. 

\subsubsection{}
Throughout this section, we assume that the natural action of $G$ on $ C \times_K K' $ extends to the smooth model such that $\mathscr{Z} \to S'$ is equivariant and such that $G$ acts \emph{faithfully} on $\mathscr{Z}_k$. 

\begin{remark}
    If $g(C) \geq 2$ or if $C$ is elliptic, and $K'/K$ is the "minimal" extension, the smooth model $\mathscr{Z}$ coincides with the minimal regular model and is therefore $G$-equivariant. It is moreover known that $ \mathrm{Gal}(K'/K) \hookrightarrow \mathrm{Aut}(\mathscr{Z}_k)$, see for instance \cite[Thm.~10.4.44]{liu}.

If however $C \cong \mathbb{P}^1_K$ and $K'/K$ is non-trivial, our assumption imposes a restriction in general (but it is automatically verified in our applications). 
\end{remark}

\subsection{The quotient map}

We now consider the quotient 
$$ \pi \colon \mathscr{Z} \to \mathscr{Y} := \mathscr{Z}/G. $$

\subsubsection{}
It is well known that $\mathscr{Y}$ is normal, and flat and projective over $S$. Since it is a global quotient of a regular two-dimensional scheme by a finite group, it is also $ \mathbb{Q} $-factorial (using e.g. \cite[31.17]{stacks-project}). We denote by $\omega_{\mathscr{Y}/S}$ the $\mathbb{Q}$-Cartier divisor corresponding to the canonical divisor.

In our main applications, we shall impose stronger assumptions to ensure that $\mathscr{Y}$ has \emph{rational} singularities. More precisely, this will hold, e.g., if either $K'/K$ is tame, or when $\mathscr{Y}$ has only weak wild quotient singularities.

\subsubsection{}

In the next lemma, we write $F = \mathscr{Z}_k$ and $ E = \mathscr{Y}_{k, red} $, as well as $ \eta_{F} $ and $ \eta_{E} $, respectively, for their generic points. 

\begin{lemma}\label{lemma:unram}
Let $K'/K$ and $\pi \colon \mathscr{Z} \to \mathscr{Y}$ be as above.
\begin{enumerate}

\item The homomorphism $ \mathcal{O}_{\eta_E} \to \mathcal{O}_{\eta_F} $ is unramified.

\item The quotient map induces a composition
$$ \mathscr{Z}_k \to \mathscr{Z}_k/G \to \mathscr{Y}_{k, red} $$
where the latter morphism is the normalization.

\item The special fiber $ \mathscr{Y}_k $ has multiplicity $e$ along $E$.

\end{enumerate}
\end{lemma}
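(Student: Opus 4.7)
The plan is to reduce each statement to a Galois-theoretic analysis of the DVR extension $\mathcal{O}_{\mathscr{Y},\eta_E} \subseteq \mathcal{O}_{\mathscr{Z},\eta_F}$. First I would observe that, since $\mathscr{Z}/S'$ is smooth and proper with geometrically integral generic fibre $C \times_K K'$, the special fibre $\mathscr{Z}_k$ is smooth, connected (by Zariski's connectedness theorem) and hence irreducible. Consequently $\mathscr{Y}_k = \mathscr{Z}_k/G$ (as topological spaces, up to nilpotents) is irreducible, so $E$ has a unique generic point $\eta_E$ and there is only one generic point $\eta_F$ of $\mathscr{Z}_k$ lying over it. Since $\mathscr{Z}$ and $\mathscr{Y}$ are normal two-dimensional, the local rings at these codimension-one points are DVRs and we have an inclusion of DVRs $A := \mathcal{O}_{\mathscr{Y},\eta_E} \hookrightarrow B := \mathcal{O}_{\mathscr{Z},\eta_F}$ on which $G$ acts with $B^G = A$.

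For (1), the extension $A \subseteq B$ is Galois with group $G$ (the decomposition group equals $G$ because there is a unique prime of $B$ above the maximal ideal of $A$). Its inertia subgroup consists of those $g \in G$ acting trivially on the residue field $\kappa(\eta_F) = k(\mathscr{Z}_k)$. Any such element acts trivially on the function field of the irreducible reduced scheme $\mathscr{Z}_k$, hence acts trivially on $\mathscr{Z}_k$ itself; by the hypothesis that $G$ acts faithfully on $\mathscr{Z}_k$, the inertia is trivial, so $A \subseteq B$ is unramified.

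For (2), the map $\pi|_{\mathscr{Z}_k}$ is $G$-equivariant with trivial action on the target $\mathscr{Y}_{k,\mathrm{red}}$, so it factors through $\mathscr{Z}_k/G$, yielding a finite morphism $\mathscr{Z}_k/G \to \mathscr{Y}_{k,\mathrm{red}}$. The step (1) shows the residue field extension $k(E) \subseteq k(\mathscr{Z}_k)$ is Galois of degree $|G|$ with $k(\mathscr{Z}_k)^G = k(E)$, hence the function field of $\mathscr{Z}_k/G$ equals $k(E)$ and the map is finite and birational. Since $\mathscr{Z}_k$ is smooth and hence normal, the quotient $\mathscr{Z}_k/G$ is normal (finite group quotient of a normal scheme), so this is the normalisation of $\mathscr{Y}_{k,\mathrm{red}}$.

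For (3), I would compute the valuation $v_A(\varpi)$, which equals the length of $\mathcal{O}_{\mathscr{Y}_k,\eta_E} = A/\varpi A$, i.e.\ the multiplicity of $\mathscr{Y}_k$ along $E$. By (1), $A \subseteq B$ is unramified, so a uniformiser of $A$ remains a uniformiser of $B$ and $v_A(\varpi) = v_B(\varpi)$. Since $\mathscr{Z}/S'$ is smooth, $\mathscr{Z}_k$ is reduced, so $\varpi'$ is a uniformiser of $B$; together with $\varpi = u(\varpi')^e$ this gives $v_B(\varpi) = e$, as desired.

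I expect no genuine obstacle: the only subtlety is the irreducibility of $\mathscr{Z}_k$ (needed to make $\eta_F$ and $\eta_E$ unambiguous) and the translation of the faithfulness hypothesis into the triviality of the inertia at $\eta_F$; once these are in place, (2) and (3) are formal consequences of (1).
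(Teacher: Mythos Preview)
Your proof is correct and follows essentially the same strategy as the paper: both arguments reduce everything to the Galois extension of DVRs $\mathcal{O}_{\eta_E}\subseteq\mathcal{O}_{\eta_F}$ and use the faithfulness hypothesis to force the residue extension $k(E)\subseteq k(F)$ to have full degree $|G|$, from which (1) and (2) follow. The only notable difference is in (3): the paper deduces the multiplicity via the push--pull identity $\pi_*\pi^*D = eD$ applied to $D=\mathscr{Y}_k$, whereas you read it off directly from $v_A(\varpi)=v_B(\varpi)=e$ using the unramifiedness already established in (1); your route is slightly more direct but the content is the same.
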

\begin{proof}
We first recall two useful facts. First, because $C/K$ is geometrically integral, we have that $ K(C \times_K K') = K(C) \otimes_K K' $. Second, since $\pi \colon \mathscr{Z} \to \mathscr{Y}$ is a finite group quotient of the normal scheme $\mathscr{Z}$, we know that $\mathscr{Z}$ equals the normalization of $\mathscr{Y} \times_S S' $ in $ K(C \times_K K') $ (\cite[Lemma 2.10]{lorenzini-wildsurfacequot}).

Because $\pi \colon \mathscr{Z} \to \mathscr{Y}$ is finite, the same is true for the flat pullback
$$ \mathscr{Z} \times_{\mathscr{Y}} \mathrm{Spec}(\mathcal{O}_{\eta_E}) \to \mathrm{Spec}(\mathcal{O}_{\eta_E}). $$
In fact, we have 
$$ \mathscr{Z} \times_{\mathscr{Y}} \mathrm{Spec}(\mathcal{O}_{\eta_E}) = \mathrm{Spec}(\mathcal{O}_{\eta_F}) $$
because normalization commutes with localization, and $\eta_F$ is the only point over $\eta_E$ by smoothness of $\mathscr{Z}/S'$. This means that the natural map
$$ \mathcal{O}_{\eta_E} \to \mathcal{O}_{\eta_F} $$
is a finite local homomorphism of discrete valuation rings, hence also flat. On fraction fields we recover 
$$ K(C) \subset K(C \times_K K') $$
which is separable (since $ K \subset K' $ is separable) of degree $e$.

By flatness, also the fiber over the residue field has degree $e$, thus the extension
$$ k(E) = \mathcal{O}_{\eta_E}/\mathfrak{m}_{\eta_E} \to \mathcal{O}_{\eta_F}/\mathfrak{m}_{\eta_E} \to \mathcal{O}_{\eta_F}/\mathfrak{m}_{\eta_F} = k(F) $$
on residue fields is of degree at most $e$. On the other hand, by our assumptions $G$ acts faithfully on $k(F)$ and trivially on $k(E)$, so the above extension factors as
$$ k(E) \subset k(F)^G \subset k(F). $$
Since $k(F)^G \subset k(F)$ is Galois of degree $e$, it follows that $ k(E) = k(F)^G$. This implies that 
$$ \mathfrak{m}_{\eta_E} \mathcal{O}_{\eta_F} = \mathfrak{m}_{\eta_F}. $$
The first two assertions follow immediately.

To prove (3), we shall use the identity (of cycles)
$$ f_*f^* D = e D $$
which holds for any Cartier divisor $D$ on $\mathscr{Y}$ (see e.g. \cite[Thm.~7.2.18]{liu}). Obviously, we can write 
$$ \mathrm{div}(\varpi) = \mathscr{Y}_k = a E $$
for some integer $a$. We will show that $a=e$.  

First we compute
$$ \pi^*(aE) = \pi^*(\mathrm{div}(\varpi)) = \mathrm{div}(u (\varpi')^e) = e F, $$
with $u \in R'$ a unit. This gives
$$ ea E = \pi_* \pi^*(aE) = e [k(F):k(E)] E = e^2 E, $$
since $\pi(F) = E$ and we proved above that $e = [k(F):k(E)]$. This gives that $ e = a$.
\end{proof}

\subsubsection{The discrepancy divisor}

Let $\Gamma$ denote the ($\mathbb{Q}$-)divisor corresponding to $\omega_{\mathscr{Z}/S'} -(\pi^*\omega_{\mathscr{Y}/S}) $.  We shall refer to $\Gamma$ as the discrepancy divisor of $\pi$. 

By the results of the previous section, $\omega_{\mathscr{Z}/\mathscr{Y}}=0$ over the regular locus $\mathscr{Y}^\circ$ of $\mathscr{Y}$. The computation of $\omega_{S'/S}$ in Proposition \ref{prop:canonicalofextension} then implies the next result: 
\begin{corollary}\label{cor:discdiv}
The discrepancy divisor of $\pi$ is given by 
$$ \Gamma = (1 - e - \mathbf{sw}) \mathscr{Z}_k. $$
\end{corollary}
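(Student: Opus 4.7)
The plan is to reduce the computation of $\Gamma$ to the computation of $\omega_{S'/S}$ via the transitivity of the relative dualizing sheaf, and then invoke Proposition \ref{prop:canonicalofextension}. Write $f_{\mathscr{Z}} \colon \mathscr{Z} \to S'$ for the structure morphism, and note that the composition $\mathscr{Z} \to S' \to S$ agrees with $\mathscr{Z} \xrightarrow{\pi} \mathscr{Y} \to S$. Applying the transitivity formula for relative canonical divisors to both factorizations yields
$$ \omega_{\mathscr{Z}/S'} + f_{\mathscr{Z}}^{*}\, \omega_{S'/S} \;=\; \omega_{\mathscr{Z}/\mathscr{Y}} + \pi^{*}\, \omega_{\mathscr{Y}/S}, $$
as equalities of $\mathbb{Q}$-Cartier divisors on $\mathscr{Z}$, so that
$$ \Gamma \;=\; \omega_{\mathscr{Z}/\mathscr{Y}} - f_{\mathscr{Z}}^{*}\,\omega_{S'/S}. $$

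The next step is to argue that $\omega_{\mathscr{Z}/\mathscr{Y}} = 0$ as a $\mathbb{Q}$-Cartier divisor on $\mathscr{Z}$. The remark preceding the corollary tells us that $\omega_{\mathscr{Z}/\mathscr{Y}}$ vanishes over the regular locus $\mathscr{Y}^{\circ}$, so its support as a divisor is contained in $\pi^{-1}(\mathscr{Y} \setminus \mathscr{Y}^{\circ})$. Since $\mathscr{Y}$ is a normal surface, $\mathscr{Y} \setminus \mathscr{Y}^{\circ}$ is a finite set of closed points, and its preimage in $\mathscr{Z}$ has codimension at least $2$. Because $\mathscr{Z}$ is regular of dimension $2$, any divisor whose support has codimension $\geq 2$ must be zero. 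Equivalently, one can argue that $\pi$ is \'etale in codimension one: on the generic fiber $\pi_{K} \colon C \times_{K} K' \to C$ is \'etale by separability of $K'/K$, while at the generic point of the special fiber \'etaleness follows from Lemma \ref{lemma:unram}(1), so the ramification divisor of $\pi$ vanishes.

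Finally, Proposition \ref{prop:canonicalofextension} gives $\omega_{S'/S} = (e - 1 + \mathbf{sw})\, s'$, where $s'$ denotes the closed point of $S'$. Pulling back along the flat morphism $f_{\mathscr{Z}} \colon \mathscr{Z} \to S'$ identifies $s'$ with $\mathscr{Z}_{k}$, so that $f_{\mathscr{Z}}^{*}\,\omega_{S'/S} = (e-1+\mathbf{sw})\,\mathscr{Z}_{k}$. Substituting this into the expression for $\Gamma$ obtained above yields
$$ \Gamma \;=\; 0 - (e - 1 + \mathbf{sw})\,\mathscr{Z}_{k} \;=\; (1 - e - \mathbf{sw})\,\mathscr{Z}_{k}, $$
which is the desired formula. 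The only delicate point is the passage from $\omega_{\mathscr{Z}/\mathscr{Y}} = 0$ on $\pi^{-1}(\mathscr{Y}^{\circ})$ to the global vanishing on $\mathscr{Z}$; this uses the regularity of $\mathscr{Z}$ together with the fact that $\mathscr{Y}$ has at most isolated singularities, both of which are guaranteed by our standing setup.
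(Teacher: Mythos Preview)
Your argument is correct and follows essentially the same route as the paper's: both use that $\omega_{\mathscr{Z}/\mathscr{Y}}$ vanishes over the regular locus $\mathscr{Y}^{\circ}$ (which is the content of Lemma~\ref{lemma:unram}(1) together with separability of $K'/K$), then invoke Proposition~\ref{prop:canonicalofextension} for $\omega_{S'/S}$. You have simply made explicit the transitivity identity and the codimension argument that the paper leaves to the reader.
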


\subsection{The quotient map on the special fiber}

We now consider the quotient
$$ \mathscr{Z}_k \to \overline{\mathscr{Z}_k} = \mathscr{Z}_k/G. $$

\subsubsection{The Euler characteristic of $\mathscr{Y}_k$}
We next relate $\chi(\overline{\mathscr{Z}_k})$ to $ \chi(\mathscr{Y}_{k, red})  $ via the map
$$  \alpha \colon \overline{\mathscr{Z}_k} \to \mathscr{Y}_{k, red}, $$
which by Lemma \ref{lemma:unram} can also be identified with the normalization map. 

\begin{lemma}\label{lem:quotfiberprops}
Let $ \mathcal{Q} \subset \mathscr{Y}$ denote the image of the set of points in $ \mathscr{Z}_k$ with non-trivial stabilizer.

\begin{enumerate}

\item $\alpha$ restricts to an isomorphism over $\mathscr{Y}_{k, red} \setminus \mathcal{Q}$.

\item For any point $ Q \in \mathscr{Y}_{k, red} $, the pre-image $ \alpha^{-1}(Q) $ is reduced to a point.

\item $\alpha$ induces equality $\chi(\overline{\mathscr{Z}_k}) = \chi(\mathscr{Y}_k) $.
\end{enumerate}

\end{lemma}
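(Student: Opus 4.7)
The plan is to establish each of the three assertions in turn, with the main substance concentrated in (1). Since $G$ acts faithfully on the smooth curve $\mathscr{Z}_k$, the set of points with non-trivial stabilizer is finite, so $\mathcal{Q}$ consists of finitely many closed points of $\mathscr{Y}_{k,\mathrm{red}}$. Moreover, on the generic fiber $\mathscr{Z}_K = C \times_K K'$ the action is automatically free (any fixed point would yield a $G$-equivariant map to $\mathrm{Spec}(K')$, whose action is free), so $G$ acts freely on the open complement $\mathscr{Z} \setminus \pi^{-1}(\mathcal{Q})$.

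For (1), I would argue via the local structure at a point $Q \in V := \mathscr{Y}_{k,\mathrm{red}} \setminus \mathcal{Q}$. Fix a preimage $P \in \pi^{-1}(Q) \subset \mathscr{Z}_k$; since the full orbit $G \cdot P$ has trivial stabilizers, standard descent for finite group quotients with free action yields an isomorphism of $R$-algebras $\widehat{\mathcal{O}}_{\mathscr{Y},Q} \simeq \widehat{\mathcal{O}}_{\mathscr{Z},P}$, where the $R$-structure on the right-hand side factors through $R \to R'$. By smoothness of $\mathscr{Z}$ over $S'$, the latter is isomorphic to $R'[[x]]$, and inside this ring $\varpi = u \cdot (\varpi')^e$ for a unit $u$. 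Hence the radical ideal defining $\mathscr{Y}_{k,\mathrm{red}}$ is $(\varpi')$, which is generated by a regular parameter, and $\mathscr{Y}_{k,\mathrm{red}}$ is therefore regular, and in particular normal, at $Q$. Since $\alpha$ is the normalization map by Lemma \ref{lemma:unram}(2), it follows that $\alpha$ is an isomorphism over $V$. For (2), observe that for any closed point $Q \in \mathscr{Y}_{k,\mathrm{red}}$ the preimage $\pi^{-1}(Q)$ is a single $G$-orbit in $\mathscr{Z}_k$, and any $G$-orbit in $\mathscr{Z}_k$ maps to a single point of $\mathscr{Z}_k / G = \overline{\mathscr{Z}_k}$; hence $\alpha^{-1}(Q)$ consists of exactly one point.

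For (3), I would apply additivity of the $\ell$-adic Euler characteristic to the stratifications $\overline{\mathscr{Z}_k} = \alpha^{-1}(V) \sqcup \alpha^{-1}(\mathcal{Q})$ and $\mathscr{Y}_{k,\mathrm{red}} = V \sqcup \mathcal{Q}$. By (1), the open strata have equal Euler characteristics, and by (2) the closed strata are finite sets of the same cardinality, hence also match. This yields $\chi(\overline{\mathscr{Z}_k}) = \chi(\mathscr{Y}_{k,\mathrm{red}})$, and finally $\chi(\mathscr{Y}_{k,\mathrm{red}}) = \chi(\mathscr{Y}_k)$ because the $\ell$-adic Euler characteristic is insensitive to nilpotent structure. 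The main obstacle is part (1), specifically producing a regular parameter for $\mathscr{Y}_{k,\mathrm{red}}$ from the free-orbit data, which requires carefully tracking the $R$-algebra structure through the $G$-invariants of the completed local rings.
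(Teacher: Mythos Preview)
Your argument is correct and follows essentially the same path as the paper: show $\mathscr{Y}_{k,\mathrm{red}}$ is regular at $Q \notin \mathcal{Q}$ via the \'etaleness of $\pi$ at points with trivial stabilizer (so the normalization $\alpha$ is an isomorphism there), identify $\alpha^{-1}(Q)$ with the image of a single $G$-orbit, and conclude with additivity of the Euler characteristic.

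One small quibble: your parenthetical claim that $G$ acts freely on the generic fiber $C_{K'}$ is not correct in the sense of trivial point-stabilizers---if $c \in C(K)$, the unique point of $C_{K'}$ over $c$ is fixed by all of $G$ (what is true is that $C_{K'} \to C$ is a $G$-torsor, hence \'etale). This remark is harmless, however, since your actual argument for (1) only uses $G_P = 1$ for $P \in \mathscr{Z}_k$ outside $\pi^{-1}(\mathcal{Q})$, which holds by definition of $\mathcal{Q}$.
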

\begin{proof}
If $P$ has trivial stabilizer, the quotient map $ \pi$ is \'etale at $P$ and hence yields an isomorphism between the completed local rings at $P$ and its image $Q$. One easily derives smoothness of $\mathscr{Y}_{k, red} $ at $Q$ from the smoothness of $\mathscr{Z}_k$ at $P$, which gives (1).

Let $Q$ be the image of $P \in \mathscr{Z}_k$.  The (reduced) preimage of $Q$ under $\pi$ consists precisely of the $G$ orbit of $P$. The same statement is true for the image $\overline{P}$ of $P$ under $\mathscr{Z}_k \to \overline{\mathscr{Z}_k} $. Assertion (2) is therefore easily obtained from considering the factorization 
$$ \mathscr{Z}_k \to \overline{\mathscr{Z}_k} \to \mathscr{Y}_{k, red}. $$

Assertion (3) follows from the previous to properties, using the scissor rule for the Euler characteristic.
\end{proof}

\subsubsection{The Artin conductor of $\mathscr{Y}$}
From Lemma \ref{lem:quotfiberprops} we immediately derive a useful statement concerning the Artin conductor of $\mathscr{Y}/S$. 

\begin{corollary}\label{Cor:ArtCond}
The Artin conductor of $\mathscr{Y}/S$ is given by
$$ \mathrm{Art}_{\mathscr{Y}/S} = \chi(\mathscr{Z}_k) - \chi(\overline{\mathscr{Z}_k}) - \mathrm{Sw}(C). $$
\end{corollary}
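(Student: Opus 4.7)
The plan is to unwind the definition \eqref{def:Artinconductor} of the Artin conductor applied to $\mathscr{Y}/S$, namely
\[
\mathrm{Art}_{\mathscr{Y}/S} = \chi(C_{K^s}) - \chi(\mathscr{Y}_k) - \mathrm{Sw}\,H^1(C_{K^s},\mathbb{Q}_\ell),
\]
and then to identify each of the first two terms with the corresponding object appearing in the statement. The last term is by definition $\mathrm{Sw}(C)$ (cf.\ \ref{subsec-curvedata}), so it requires no further manipulation.

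For the first term, I would invoke smooth and proper base change applied to $\mathscr{Z} \to S'$: since $\mathscr{Z}/S'$ is smooth and proper with generic fibre $C \times_K K'$, the $\ell$-adic Euler characteristics of the generic and special fibres agree, and since $k$ is algebraically closed this common value equals $\chi(C_{K^s})$. Hence $\chi(\mathscr{Z}_k) = \chi(C_{K^s})$.

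For the second term, I would use Lemma \ref{lem:quotfiberprops}(3), which gives exactly $\chi(\mathscr{Y}_k) = \chi(\overline{\mathscr{Z}_k})$; here one should note that $\ell$-adic Euler characteristics are insensitive to the non-reduced structure of $\mathscr{Y}_k$, so there is no conflict between the $\mathscr{Y}_k$ appearing in the definition of $\mathrm{Art}_{\mathscr{Y}/S}$ and the $\mathscr{Y}_{k,\mathrm{red}}$ appearing in the statement of the lemma. Substituting the two identifications into the definition yields the desired formula. There is no substantive obstacle here; the statement is purely a bookkeeping consequence of Lemma \ref{lem:quotfiberprops} combined with the smoothness of $\mathscr{Z}/S'$.
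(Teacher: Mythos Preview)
Your proposal is correct and follows exactly the approach the paper has in mind: the paper simply states that the corollary is derived immediately from Lemma \ref{lem:quotfiberprops}, and your argument unpacks this by identifying $\chi(C_{K^s}) = \chi(\mathscr{Z}_k)$ via smooth and proper base change and $\chi(\mathscr{Y}_k) = \chi(\overline{\mathscr{Z}_k})$ via part (3) of that lemma. Your remark that the Euler characteristic is insensitive to nilpotents is a helpful clarification that the paper leaves implicit.
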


\subsection{A comment on a result by Lorenzini}

Lemma \ref{lemma:unram} allows us to derive an interesting structural result for the minimal resolution with strict normal crossings of $ \mathscr{Y} $. This result, which we will not use directly in the sequel, generalizes (a part of) \cite[Thm.~5.3]{Lor14}, which is formulated under the assumption that $ K'/K $ is precisely of degree $p$.

\subsubsection{}
The quotient $ \mathscr{Y} $ has finitely many isolated singularities $ Q_1, \ldots, Q_m $. We let $ \phi \colon \cX \to \mathscr{Y} $ denote the minimal resolution with strict normal crossings; by construction $\phi$ restricts to an isomorphism over $ \mathscr{Y} \setminus \cup_i Q_i $.

We let $\widetilde{E}$ denote the strict transform of $ \mathscr{Y}_{k, red} $ on $ \cX $. Then $\widetilde{E}$ intersects finitely many exceptional prime divisors $ E_1, \ldots, E_n $ non-trivially. The multiplicity of each $E_i$ in the special fiber $ \cX_k $ will be denoted $N_i$. 

\subsubsection{}
%Our results so far allows yield an interesting structural result for $\phi$.
In the proof, we freely use results and notation from \cite{logjumps} and \cite{MitSme}. In particular, for any scheme $V $ flat over a discrete valuation ring, we write $V^+$ for the divisorial log structure induced by the special fiber $V_k$.

\begin{prop}
Assume that $p$ divides $e$. Then $p$ divides $N_i$ for every $i \in \{1, \ldots, n \} $. 
\end{prop}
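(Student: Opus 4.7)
The strategy is to reinterpret the multiplicities $N_i$ as pull-back data for the $\mathbb{Q}$-Cartier divisor $E$ on $\mathscr{Y}$, and then to derive the desired $p$-divisibility through a local analysis at the singular points of $\mathscr{Y}$ meeting $E$.

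Since $\mathscr{Y}$ is $\mathbb{Q}$-factorial, the reduced special fibre $E$ is $\mathbb{Q}$-Cartier, and we may write
\[
\phi^{\ast} E \;=\; \widetilde{E} \;+\; \sum_{\alpha} a_{\alpha} E_{\alpha}^{\mathrm{exc}},
\]
where the sum runs over all exceptional prime divisors of $\phi$ and $a_{\alpha} \in \mathbb{Q}_{\geq 0}$. Combining with Lemma~\ref{lemma:unram}(3), which states that $\mathscr{Y}_k = eE$, we obtain $\cX_k = \phi^{\ast}\mathscr{Y}_k = e\,\phi^{\ast} E$, and hence $N_i = e\,a_i$ for each $E_i$ adjacent to $\widetilde{E}$. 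It therefore suffices to prove $v_p(a_i) \geq 1 - v_p(e)$.

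The coefficients $a_{\alpha}$ above a fixed singular point $Q \in E$ are characterised by the linear system $M_Q\,\mathbf{a} = -\mathbf{b}$, with $M_Q$ the (negative definite) intersection matrix of the exceptional divisors above $Q$ and $\mathbf{b}$ the column vector of intersection numbers $\widetilde{E} \cdot E_{\beta}^{\mathrm{exc}}$. By Cramer's rule, each $a_{\alpha}$ is a rational number whose denominator divides $|\det M_Q|$, so the problem reduces to a $p$-adic bound on this classical discriminant invariant of the resolution of the local quotient singularity.

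The crux is precisely this final $p$-adic estimate. It would be proved via analysis of the local model $\widehat{\mathcal{O}}_{\mathscr{Y},Q} \cong R'[[s]]^{H}$, where $H \subseteq G$ is the stabilizer of a preimage $P^{\sharp} \in \mathscr{Z}$ of $Q$: the smoothness of $\mathscr{Z}$, the ramification filtration of $H$, and the logarithmic-geometric interplay between the resolution of $\mathscr{Y}$ and the special fibre of $\mathscr{Z}$ (for which the techniques of \cite{logjumps} and \cite{MitSme} are suited) should together bound $v_p(|\det M_Q|)$ sharply enough to force $p \mid N_i$. In the case $e = p$, this program has been carried out explicitly via Artin--Schreier theory in \cite[Thm.~5.3]{Lor14}, and our more general setting is expected to proceed along the same lines.
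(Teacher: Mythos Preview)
Your plan has a genuine gap: the $p$-adic bound on $|\det M_Q|$ you hope to establish is in general \emph{false}, so the route via the Cramer denominator cannot succeed as stated.

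The setup through $N_i = e\,a_i$ is correct, and Cramer does give $v_p(a_\alpha) \geq -v_p(|\det M_Q|)$; but this inequality holds for \emph{every} exceptional component $E_\alpha$ over $Q$, not only those adjacent to $\widetilde{E}$. Hence the bound $v_p(|\det M_Q|) \leq v_p(e)-1$ you are aiming for would force $p \mid N_\alpha$ for \emph{all} exceptional $E_\alpha$ over $Q$. That is impossible in general: since $C$ has index one, the gcd of all multiplicities in $\cX_k$ is $1$, while the multiplicity of $\widetilde{E}$ is $e$; so some exceptional component must have multiplicity prime to $p$, and at the $Q$ lying under such a component one obtains instead $v_p(|\det M_Q|) \geq v_p(e)$. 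The proposition concerns only the divisors \emph{adjacent to $\widetilde{E}$}, and the denominator argument discards exactly this adjacency information; any rescue would have to control the Cramer numerators as well, which you do not address and which would amount to a direct combinatorial analysis of the resolution graph rather than a bound on $\det M_Q$.

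The paper's argument is entirely different and short. By contradiction, if some $N_i$ with $E_i$ meeting $\widetilde{E}$ were prime to $p$, then $\cX^+ \to S^+$ is log smooth at a point of $E_i \cap \widetilde{E}$, hence $\mathscr{Y}^+ \to S^+$ is log smooth at the generic point of $E$. After shrinking, one identifies $\pi$ with the map underlying the fs-base change along $(S')^+ \to S^+$, and compatibility of the log canonical sheaf with pull-back gives $\Gamma = (1-e)\,\mathscr{Z}_k$. This contradicts Corollary~\ref{cor:discdiv}, which gives $\Gamma = (1-e-\mathbf{sw})\,\mathscr{Z}_k$ with $\mathbf{sw}_{K'/K} > 0$ precisely because $p \mid e$. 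Here the adjacency of $E_i$ to $\widetilde{E}$ is used essentially, to transport log smoothness from the exceptional locus to the generic point of $E$ --- the very feature your approach threw away.
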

\begin{proof}

Assume for contradiction that one of the $N_i$ is prime-to-$p$. After re-indexing, we can assume that this holds for $N_1$. Let $ Q $ be a point in the support of $ E_1 \cap \widetilde{E} $. Since $p$ does not divide $ N_1 $, the morphism $ \cX^+ \to S^+ $ of log schemes is log smooth in a Zariski neighbourhood $U$ of $ Q $. Then $ U \cap \widetilde{E} $ is an open non-empty subset of $ \widetilde{E} $, hence the generic point $ \eta_{\widetilde{E}} $ of $ \widetilde{E} $ is contained in the log smooth locus of $ \cX^+ \to S^+ $.

This implies that $ \mathscr{Y}^+ \to S^+ $ itself is log smooth after we remove finitely many closed points in the special fiber $ \mathscr{Y}_k $. We write $ \mathscr{Y}^{\circ} $ for the corresponding open subscheme of $ \mathscr{Y} $.

Since $\Gamma $ is a multiple of $ \mathscr{Z}_k $, we can compute its coefficient after replacing $ \mathscr{Y} $ by $ \mathscr{Y}^{\circ} $. Hence in the remainder of this proof we may suppose that $ \mathscr{Y}^+ \to S^+ $ is log smooth. Then $ \pi \colon \mathscr{Z} \to \mathscr{Y} $ can be identified with the underlying map of schemes for the projection map 
$$ \mathscr{Z}^+ = \mathscr{Y}^+ \times^{fs}_{S^+} (S')^+ \to \mathscr{Y}^+ $$ 
induced by $fs$-basechange along $(S')^+ \to S^+$. In particular, the formation of the log canonical sheaf commutes with pullback along $\pi$.

This now gives us identities
$$ \omega_{\mathscr{Z}/S'} = \omega_{\mathscr{Z}/S'}^{log} = \pi^*\omega_{\mathscr{Y}/S}^{log} = \pi^*\omega_{\mathscr{Y}/S} \otimes \pi^* \mathcal{O}_{\mathscr{Y}} (\mathscr{Y}_{k, red} - \mathscr{Y}_k). $$ 
On the level of divisors, and using some computations from (the proof of) Lemma \ref{lemma:unram} above, we get
%\footnote{should I perhaps be a bit careful with the notation here and in other places, these are not equalities, but equivalences...}
$$ \Gamma = \omega_{\mathscr{Z}/S'} - \pi^*
\omega_{\mathscr{Y}/S} = (1 - e) \mathscr{Z}_k. $$
Intersecting with a section of $ \mathscr{Z} \to S'$ we obtain a contradiction, because $ \mathbf{sw} = 0 $ if and only if $ K'/K $ is tamely ramified. 

\end{proof}

\section{Curves with tame potential good reduction}\label{sec:tamegoodred}

In this section we assume that $C$ acquires good reduction after a finite tame extension $L/K $ of degree $e$. As usual, we assume that the extension $L/K$ is minimal with this property. This implies that the singularities of the quotient of the good model for the Galois action  are \emph{tame cyclic quotient singularities}. 

\subsection{Tame cyclic quotient singularities}
We start out by recalling a few facts on tame cyclic quotient singularities, and set some notation. We refer to \cite[Section 1]{Lor90} for details. 

\subsubsection{The singularities of the quotient}
We consider again the factorization 
$$ \mathscr{Z}_k \to \mathscr{Z}_k/G \to \mathscr{Y}_{k, red} $$
induced by the quotient 
$$ \pi \colon \mathscr{Z} \to \mathscr{Y} = \mathscr{Z}/G. $$
Then, since $L/K$ is tame, one can show that the second map is an isomorphism, hence $ \mathscr{Y}_{k, red} $ is regular. 

The singularities of $\mathscr{Y}$, however, are controlled by the first map. To explain this, let $Q$ be a branch point for $ \mathscr{Z}_k \to \mathscr{Z}_k/G $. We say that $Q$ is of \emph{type $d$}, with $d$ a divisor of $e$, if the Galois orbit of points in $\mathscr{Z}_k $ mapping to $Q$ has cardinality $d$. Then $d$ is necessarily a divisor of $e$, and for each $P$ in the orbit, the ramification index at $P$ equals $ e_Q = e/d $. 

A point $Q \in \mathscr{Y}$ is singular if and only if $Q$ is of type $d$ for some $ 1 \leq d < e $. It is then a tame cyclic quotient singularity with numerical data $ (e_Q, r_Q) $, where $ 0 < r_Q < e_Q $ is prime to $e_Q$ and depends only on the local action at (any) $P \in \mathscr{Z}$ mapping to $Q$.

\subsubsection{The minimal resolution}
The minimal resolution $ \rho \colon \cX_Q \to (\mathscr{Y}, Q)$ depends only on the numerical data $ (e_Q, r_Q) $. In fact, write 
$$ \frac{e_Q}{r_Q} = [a_1, \ldots, a_{l_Q}] $$
for the Jung-Hirzebruch continued fraction expansion. Then the exceptional locus of $\rho$ consists of a normal crossing chain of smooth rational curves with self-intersections $ E_1, \ldots, E_{l_Q} $ where $ - E_i^2 = a_i $. The first curve $ E_1 $ intersects the strict transform of $\mathscr{Y}_{k, red} $ transversely in a point.

\subsection{Milnor numbers}
We will now give a formula for the Milnor number $\mu_Q$ of a tame cyclic quotient singularity.
We do not include proofs, since these results are not needed elsewhere in the paper.

\subsubsection{}
Since tame cyclic quotient singularities are rational (by a similar argument as in the log-regular case in \ref{subsub:logregular}), Corollary \ref{cor:milnornumber} implies that the Milnor number of $Q$ is given by the expression
$$ \mu_Q = \Gamma_Q^2 + l_Q, $$
where $\Gamma_Q$ denotes the discrepancy divisor for the resolution $\rho$ discussed above. In order to give an expression for this divisor, we first need to recall some numerical data associated to the singularity $Q$. 

\subsubsection{}
Put $r_0 := e_Q$, $ r_1 := r_Q $ and define $ r_i $ inductively by 
$$ r_{i+1} = a_i r_i - r_{i-1}. $$
It is easily verified that 
$$ r_0 > r_1 > \ldots > r_{l_Q} = 1. $$

Next, we define universal polynomials $P_0, \ldots, P_{l_Q + 1}$ in the $a_i$-s. To do this, put $P_0 = 1$, $P_1 = 1$ and define $P_i$ inductively by 
$$ P_{i+1} = P_i \cdot a_i - P_{i-1}. $$

\subsubsection{}
Now, if we set
$$ x_i = \frac{1}{r_0}(P_i + r_i) $$
for each $i$, it is not hard to check that the discrepancy divisor is equal the $\mathbb{Q}$-divisor
$$ \Gamma_Q = \sum_{i=1}^{l_Q} (x_i - 1) E_i. $$

A direct computation then yields  that

\begin{prop}\label{prop:milnortamequot}
The Milnor number of $Q$ is given by the formula
$$ \mu_Q = 3 l_Q - \left( [a_1, \ldots, a_{l_Q}]^{-1} + \sum_{i=1}^{l_Q} a_i + [a_{l_Q}, \ldots, a_1]^{-1} \right) + 2 \left(1 - \frac{1}{e_Q}\right). $$
\end{prop}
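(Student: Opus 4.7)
The strategy is to combine the general formula $\mu_Q = \Gamma_Q^2 + l_Q$ from Corollary \ref{cor:milnornumber} (applicable because tame cyclic quotient singularities are rational, with the exceptional fibre of a minimal resolution being a chain of smooth rational curves, so the argument of \ref{subsub:logregular} applies) with a direct combinatorial evaluation of the self-intersection $\Gamma_Q^2$.

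First I would read off the row values of the discrepancy by adjunction. Each $E_j$ is a smooth rational curve contained in the exceptional locus, so adjunction gives $\omega_{\cX_Q/S} \cdot E_j = -2 - E_j^2 = a_j - 2$; and $\rho^* \omega_{\mathscr{Y}/S} \cdot E_j = 0$ by the projection formula since $\rho$ contracts $E_j$. Hence $\Gamma_Q \cdot E_j = a_j - 2$ for every $j$, and bilinearity yields
\[
\Gamma_Q^2 \;=\; \sum_{j=1}^{l_Q}(x_j - 1)(a_j - 2).
\]

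Next, substituting the explicit expression for $x_j$ reduces matters to evaluating the two sums $\sum_j (a_j - 2) r_j$ and $\sum_j (a_j - 2) P_j$. Both telescope: for any sequence $v_i$ satisfying the Jung--Hirzebruch recurrence $v_{i+1} = a_i v_i - v_{i-1}$ one has the identity $(a_i - 2) v_i = (v_{i+1} - v_i) - (v_i - v_{i-1})$, so the sum over $i = 1, \ldots, l_Q$ collapses to the boundary expression $(v_{l_Q+1} - v_{l_Q}) - (v_1 - v_0)$. Using $r_0 = e_Q$, $r_1 = r_Q$, $r_{l_Q} = 1$, and $r_{l_Q+1} = 0$ the $r$-sum evaluates to $e_Q - r_Q - 1$; the $P$-sum evaluates analogously through its initial and terminal values.

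Finally, the rational boundary quantities produced by the telescoping must be identified with the two inverse continued fractions in the statement. The identity $r_Q/e_Q = [a_1, \ldots, a_{l_Q}]^{-1}$ is immediate from the definition of the Jung--Hirzebruch expansion. The reversed expansion satisfies $[a_{l_Q}, \ldots, a_1]^{-1} = s/e_Q$, where $s$ is the unique integer with $r_Q s \equiv 1 \pmod{e_Q}$ and $0 < s < e_Q$. This classical duality follows by writing the transfer matrix
\[
M \;=\; \prod_{i=l_Q}^{1} \begin{pmatrix} a_i & -1 \\ 1 & 0 \end{pmatrix} \in \mathrm{SL}_2(\mathbb{Z}),
\]
whose prescribed boundary values on $(r_i)$ fix one column of $M$, while $\det M = 1$ fixes the other, producing the arithmetic relation $r_Q s \equiv 1 \pmod{e_Q}$. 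Matching the boundary evaluation from the $P$-telescoping with $s$ is the critical bookkeeping step; once it is in hand, assembling everything into $\mu_Q = \Gamma_Q^2 + l_Q$ and collecting terms produces the three groups on the right-hand side: $3 l_Q$, the triple $[a_1, \ldots, a_{l_Q}]^{-1} + \sum a_i + [a_{l_Q}, \ldots, a_1]^{-1}$, and the residual constant $2(1 - 1/e_Q)$. The main obstacle lies precisely in this continued-fraction reconciliation; away from it the argument is a routine combinatorial computation.
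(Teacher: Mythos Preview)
Your approach is correct and is exactly the ``direct computation'' the paper alludes to but explicitly declines to present (the paper states that proofs in this subsection are omitted, as the results are not used elsewhere). Starting from $\mu_Q=\Gamma_Q^2+l_Q$, using adjunction to get $\Gamma_Q\cdot E_j=a_j-2$, and then telescoping via $(a_i-2)v_i=(v_{i+1}-v_i)-(v_i-v_{i-1})$ is the natural route, and your identification of the boundary terms with the two inverse continued fractions is on point; the only nontrivial ingredient is indeed the classical duality $[a_{l_Q},\ldots,a_1]^{-1}=s/e_Q$ with $r_Qs\equiv 1\pmod{e_Q}$, which you correctly isolate and justify via the $\mathrm{SL}_2(\mathbb{Z})$ transfer matrix.
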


\subsection{The main result}
We now apply the above considerations in order to obtain a formula for the base change conductor. 

\subsubsection{}
In the proof below, we shall use the notation $ \chi = \chi(\mathscr{Z}_k)$ and $ \overline{\chi} = \chi(\mathscr{Y}_{k, red})$.

\begin{theorem}\label{thm:formulapotgoodtame}
Assume that $C$ acquires good reduction after a tame extension. Then
$$ c(J) = \frac{u}{2} + \frac{1}{12} \sum_{Q \in \mathrm{Sing}(\mathscr{Y})} \left( \mu_Q - 2 \left(1 - \frac{1}{e_Q}\right) \right). $$ 

\end{theorem}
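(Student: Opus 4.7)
The plan is to specialize Corollary \ref{cor:basechconformula}(3) to $\pi\colon \mathscr{Z}\to \mathscr{Y}$ and then simplify each term using the very concrete description of the tame Galois cover $\mathscr{Z}_k \to \overline{\mathscr{Z}_k}$. First one checks the hypotheses of Corollary \ref{cor:basechconformula}(3): the quotient $\mathscr{Y}$ is $\mathbb{Q}$-factorial as a global quotient of a regular two--dimensional scheme by a finite group, and its only singularities are tame cyclic quotient singularities, which are rational (Corollary \ref{cor:milnornumber}); the scheme $\mathscr{Z}$ is smooth over $S'$ by assumption and is the normalization of $\mathscr{Y}\times_S S'$, since the natural finite map $\mathscr{Z}\to \mathscr{Y}\times_S S'$ is a birational morphism onto a normal target (generically it is the identity on $C\times_K L$).

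Next, one evaluates each summand in
\begin{displaymath}
-12\, c(J) \;=\; \frac{2}{e}\,\Gamma\cdot\pi^{*}\omega_{\mathscr{Y}/S} \;+\; \mathrm{Art}_{\mathscr{Y}/S} \;-\; \mu_{\mathscr{Y}}.
\end{displaymath}
Since $L/K$ is tame, $\mathbf{sw}=0$ and Corollary \ref{cor:discdiv} gives $\Gamma = (1-e)\mathscr{Z}_k$. Writing $\pi^{*}\omega_{\mathscr{Y}/S} = \omega_{\mathscr{Z}/S'} - \Gamma$ and using $\mathscr{Z}_k^{2}=0$ together with adjunction on the smooth curve $\mathscr{Z}_k$ gives
\begin{displaymath}
\Gamma\cdot\pi^{*}\omega_{\mathscr{Y}/S} \;=\; (1-e)\,\mathscr{Z}_k\cdot \omega_{\mathscr{Z}/S'} \;=\; (1-e)(2g-2).
\end{displaymath}
For the Artin conductor, Corollary \ref{Cor:ArtCond} together with $\mathrm{Sw}(C)=0$ yields $\mathrm{Art}_{\mathscr{Y}/S} = \chi(\mathscr{Z}_k) - \chi(\overline{\mathscr{Z}_k})$. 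The map $\mathscr{Z}_k \to \overline{\mathscr{Z}_k}$ is a tame $G$-cover of smooth curves whose branch points are precisely the singular points $Q\in\mathrm{Sing}(\mathscr{Y})$, with ramification index $e_Q$ and orbit size $e/e_Q$; a standard scissor computation therefore gives
\begin{displaymath}
\chi(\overline{\mathscr{Z}_k}) \;=\; \frac{2-2g}{e} \;+\; \sum_{Q}\bigl(1 - 1/e_Q\bigr),
\end{displaymath}
and consequently $\mathrm{Art}_{\mathscr{Y}/S} = (2-2g)(1-1/e) - \sum_Q (1-1/e_Q)$. Plugging everything in and simplifying, one arrives at
\begin{displaymath}
c(J) \;=\; \frac{(e-1)(g-1)}{2e} \;+\; \frac{1}{12}\sum_{Q}\!\Bigl(\mu_Q + \bigl(1 - 1/e_Q\bigr)\Bigr).
\end{displaymath}

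Finally, one recasts the leading term in terms of $u$. Riemann--Hurwitz applied to $\mathscr{Z}_k \to \overline{\mathscr{Z}_k}$ gives $\sum_{Q}(1-1/e_Q) = (2g-2)/e - 2\bar{g} + 2$, where $\bar g = g(\overline{\mathscr{Z}_k})$, and a direct algebraic manipulation then shows
\begin{displaymath}
\frac{(e-1)(g-1)}{2e} + \frac{1}{4}\sum_{Q}\bigl(1-1/e_Q\bigr) \;=\; \frac{g-\bar g}{2},
\end{displaymath}
which rewrites the above as $\tfrac{g-\bar g}{2} + \tfrac{1}{12}\sum_{Q}(\mu_Q - 2(1-1/e_Q))$. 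It remains to identify $g-\bar g$ with the unipotent rank $u$ of the N\'eron model of $J$. The toric rank is zero (it is invariant under field extension and $J$ has potential good reduction), so $a+u=g$; the identity $a=\bar g$ follows, in the tame potential good reduction setting, from Edixhoven's filtration of $\mathrm{Lie}(\mathscr{J})$ combined with the isomorphism $H^{0}(\mathscr{Z}_k,\Omega^{1})^{G}\cong H^{0}(\overline{\mathscr{Z}_k},\Omega^{1})$ for a tame Galois cover of smooth curves.

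The main obstacle is purely bookkeeping---the intersection--theoretic input of Corollary \ref{cor:basechconformula}(3) reduces everything to explicit computations on $\mathscr{Z}_k$ and the branch divisor of the tame cover. The only non--routine ingredient is the identification $u = g - \bar g$, which is not visible from the intersection theory on $\mathscr{Y}$ and requires the separate N\'eron--theoretic input just described.
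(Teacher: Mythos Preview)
Your approach is essentially the paper's: both specialize Corollary~\ref{cor:basechconformula}(3) to $\pi\colon\mathscr{Z}\to\mathscr{Y}$, compute $\Gamma\cdot\pi^*\omega_{\mathscr{Y}/S}$ and $\mathrm{Art}_{\mathscr{Y}/S}$ via Corollaries~\ref{cor:discdiv} and~\ref{Cor:ArtCond}, and simplify with Riemann--Hurwitz for the tame cover $\mathscr{Z}_k\to\overline{\mathscr{Z}_k}$. The paper organizes the algebra so as to reach $-12c(J)=3(\chi-\bar\chi)-\sum_Q\bigl(\mu_Q-2(1-1/e_Q)\bigr)$ directly and then uses $\chi-\bar\chi=-2u$ without further comment; your detour through $\tfrac{(e-1)(g-1)}{2e}$ is equivalent.

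One correction on the step you singled out as non-routine. The toric rank is \emph{not} invariant under field extension in general (an elliptic curve with additive, potentially multiplicative reduction has $t=0$ over $K$ but $t=1$ after base change), so that part of your justification is misstated. The identity $u=g-\bar g$ is obtained more directly from the minimal desingularization $\cX\to\mathscr{Y}$: the exceptional locus over each tame cyclic quotient singularity is a chain of smooth rational curves, so the dual graph of $\cX_k$ is a tree (whence $t=b_1(\Gamma(\cX))=0$) and the only component of positive genus is the strict transform of $\mathscr{Y}_{k,\mathrm{red}}\cong\overline{\mathscr{Z}_k}$ (whence $a=\bar g$); then $u=g-a-t=g-\bar g$. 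This is what the paper tacitly uses, and it avoids invoking Edixhoven's filtration.
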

\begin{proof}
By Corollary \ref{cor:basechconformula} we have
$$ - 12 c(J) = \frac{2}{e} \Gamma \cdot \pi^* \omega_{\mathscr{Y}/S} + \left( \mathrm{Art}_{\mathscr{Y}/S} - \mu \right),  $$
where $\Gamma$ is the discrepancy divisor for $ \pi \colon \mathscr{Z} \to \mathscr{Y} $ and $\mu = \sum_{Q \in \mathrm{Sing}(\mathscr{Y})} \mu_Q $ is the sum of the Milnor numbers. By Corollary \ref{cor:discdiv} and Corollary \ref{Cor:ArtCond}, the right hand side can be written
$$ 2 \chi (1 - \frac{1}{e}) + \chi - \overline{\chi} - \mu. $$

For each divisor $ d \vert e $, let $m_d$ denote the number of branch points of type $d$.
Then we derive from the Riemann-Hurwitz formula the identity
$$ - 2 \frac{\chi}{e} = - 2 \overline{\chi} + 2 \sum_{d \vert e} m_d \left( 1 - \frac{d}{e} \right). $$

From this we obtain 
$$ - 12 c(J) = 3 (\chi - \overline{\chi}) - \sum_{d \vert e} m_d \left( \mu_d - 2 \left(1 - \frac{d}{e}\right) \right), $$
which immediately yields the asserted expression for $c(J)$.

\end{proof}

\subsubsection{}
By combining Proposition \ref{prop:milnortamequot} and Theorem \ref{thm:formulapotgoodtame}, we can also write the base change conductor as
$$ c(J) = \frac{u}{2} + \frac{1}{12} \sum_{Q \in \mathrm{Sing}(\mathscr{Y})} \widetilde{\mu}_Q, $$
where 
$$ \widetilde{\mu}_Q = 3 l_Q - \left( [a_1, \ldots, a_{l_Q}]^{-1} + \sum_{i=1}^{l_Q} a_i + [a_{l_Q}, \ldots, a_1]^{-1} \right). $$

\begin{remark}
The Milnor number $\mu_Q$, as well as $\widetilde{\mu}_Q$, can take both positive and negative values depending on the parameters $ (r_0, r_1) $ for the singularity. For instance, when $ (r_0,r_1) = (6,1)$, one computes that $ \widetilde{\mu}_Q = - \frac{10}{3}$ and $\mu_Q =  - \frac{5}{3}$, whereas for parameters $ (3,2)$ the values are $ \widetilde{\mu}_Q = \frac{2}{3}$ and $\mu_Q = 2$. We moreover have that $ \widetilde{\mu}_Q $, resp.~$\mu_Q$, vanishes for the parameters $ (2,1) $, resp.~$ (4,1) $.  
\end{remark}

\section{Curves with wild potential good reduction}
In this section, we assume that the curve $C$ acquires good reduction after a finite (minimal) extension $K'/K $ that is \emph{purely wild}. In particular, it is of degree $e = p^r$ for some integer $r > 0$. 

In order to control the singularities of the quotient $\mathscr{Y}$ of the good model $\mathscr{Z}$, we need to impose further assumptions. This will ensure that $\mathscr{Y}$ has \emph{weak wild quotient singularities}. This class of singularites was introduced, and studied in detail, by Obus and Wewers in \cite{ObWe}. 

\subsection{Weak wild quotient singularities}\label{subsec:weakwildquot}

\subsubsection{}
For any point $P \in \mathscr{Z}$, we let $ G_P \subset G $ denote the stabilizer group of $P$. If moreover $ P \in \mathscr{Z}_k$, the group $G_P$ acts on 
$$ \widehat{\mathcal{O}}_{\mathscr{Z}_k, P} \cong k[[\tau]]. $$
This action is called \emph{weakly ramified} if the higher ramification groups $ G_{P,i} $ are zero whenever $ i \geq 2 $. 

\begin{definition}
We say the action of $G$ on $\mathscr{Z}_k$ is \emph{weakly ramified} if for every closed point $P$ the action of $G_P$ on $ \widehat{\mathcal{O}}_{\mathscr{Z}_k, P} $ is weakly ramified.
\end{definition}

From now on, unless otherwise mentioned, we shall always assume that the $G$-action on $\mathscr{Z}_k$ is weakly ramified. 

\subsubsection{}
Given $ P \in \mathscr{Z}_k$ we write $K_P = (K')^{G_P}$ for the fixed field, and denote by $R_{P}$ its ring of integers. 

\begin{lemma}\label{lemma:hasweakquotsing} Assume that $G_P$ is non-trivial. Then the $\mathrm{Spec}(R_{P})$-scheme $\mathscr{Z}/G_P$ has a \emph{weak wild quotient singularity} at the image $\widetilde{Q}$ of $P$.
\end{lemma}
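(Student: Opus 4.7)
The plan is to identify the formal germ of $\mathscr{Z}/G_P$ at $\widetilde{Q}$ and to match this data with the definition of a weak wild quotient singularity given by Obus and Wewers in \cite{ObWe}.

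First, I would observe that $G_P$ is automatically a $p$-group. This follows from the assumption that $K'/K$ is purely wild, which forces $G = \mathrm{Gal}(K'/K)$, and therefore its subgroup $G_P = \mathrm{Gal}(K'/K_P)$, to be a $p$-group. In particular, $K'/K_P$ is a totally wildly ramified Galois $p$-extension, which is the type of extension that controls the base of the singularities considered in \cite{ObWe}.

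Next, I would extract the formal local data at $P$ and at the image $\widetilde{Q}$. Since $\mathscr{Z} \to \mathrm{Spec}(R')$ is smooth of relative dimension one and $P$ lies in the special fiber, there is an isomorphism
\[
\widehat{\mathcal{O}}_{\mathscr{Z},P} \;\cong\; R'[[\tau]],
\]
for some $\tau$ lifting a uniformizer of $\widehat{\mathcal{O}}_{\mathscr{Z}_k,P} \cong k[[\tau]]$. Because $G_P$ fixes $P$ by definition, it acts $R_P$-linearly on $R'[[\tau]]$: on $R'$ it acts by the Galois action $G_P = \mathrm{Gal}(K'/K_P)$, while on the reduction $k[[\tau]]$ it acts in a weakly ramified fashion by the standing hypothesis of Section \ref{subsec:weakwildquot}. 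Since finite group quotients commute with completion at a closed fixed point, the completed local ring of $\mathscr{Z}/G_P$ at $\widetilde{Q}$ is
\[
\widehat{\mathcal{O}}_{\mathscr{Z}/G_P,\widetilde{Q}} \;\cong\; R'[[\tau]]^{G_P},
\]
and this is naturally the formal germ of an $R_P$-scheme via $R_P = (R')^{G_P} \hookrightarrow R'[[\tau]]^{G_P}$.

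Finally, I would verify that this data — a faithful action of a finite $p$-group on a regular two-dimensional complete local ring, acting on the base DVR $R'$ by the full Galois action of the purely wild extension $K'/K_P$, and acting on the special fiber $k[[\tau]]$ by a weakly ramified action — is precisely the data axiomatized by Obus and Wewers as defining a weak wild quotient singularity. The main obstacle is essentially of a bookkeeping nature: one has to parse the definition of \cite{ObWe} carefully and confirm that their hypotheses on the group action, on the residue field extension and on the interaction between the action on $R'$ and on $\tau$ are literally the ones we have just produced, rather than an equivalent reformulation. Once this identification is made, the lemma follows directly from the definition.
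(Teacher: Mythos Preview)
Your verification of the three defining conditions in \cite[Def.~3.7]{ObWe} is essentially the same as the paper's, only phrased through the completed local ring rather than globally: condition~(a) corresponds to your observation that $G_P$ acts on $R'$ via Galois, so normalized base change recovers the smooth $\mathscr{Z}$; condition~(b) is faithfulness on the special fiber, inherited from the standing assumption on $G$; condition~(c) is the weak ramification hypothesis.

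However, you have a genuine gap: you never verify that $\widetilde{Q}$ is actually a \emph{singular} point of $\mathscr{Z}/G_P$. This is not automatic from the three conditions above, and the paper flags it explicitly (``strictly speaking, we also need to check\ldots''). Your claim that the remaining work is ``essentially of a bookkeeping nature'' is too optimistic here: a non-free wild action of a $p$-group on a regular local ring does not obviously produce a singular invariant ring, and in fact the paper's argument is not purely local. It invokes the result from \cite[\S4]{ObWe} that the germ is formally isomorphic to the unique candidate singular point on a normal proper model $\mathscr{Y}_{\widetilde{Q}}$ of $\mathbb{P}^1_{K_P}$, and then argues globally: if $\mathscr{Y}_{\widetilde{Q}}$ were regular, a rational point of $\mathbb{P}^1_{K_P}$ would extend to a section meeting the special fiber transversally in a smooth point, contradicting that $(\mathscr{Y}_{\widetilde{Q}})_k$ has multiplicity $|G_P|>1$. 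You need either this argument or a direct local one to close the gap.
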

\begin{proof}
First of all, we observe that all three conditions in \cite[Def.~3.7]{ObWe} are verified: (a) the normalization of $\mathscr{Z}/G_P \times_{R_P} R'$ is smooth; (b) $G_P$ acts faithfully on $\mathscr{Z}_k$; (c) the action is weakly ramified at $P$. 

Strictly speaking, we also need to check that $\widetilde{Q} \in \mathscr{Z}/G_P$ is indeed singular. However, this holds due to our assumption of weak ramification. Indeed, the arguments in \cite[Section 4]{ObWe} show that $(\mathscr{Z}/G_P, \widetilde{Q} )$ is formally isomorphic to $(\mathscr{Y}_{\widetilde{Q}}, \widetilde{Q} )$, where $\mathscr{Y}_{\widetilde{Q}}$ is a normal model of $ \mathbb{P}^1_{K_P}$ obtained as a $G_P$-quotient of a smooth proper model $\mathscr{Z}_P$ of $ \mathbb{P}^1_{K'}$. The action on $\mathscr{Z}_P$ extends the arithmetic action induced by $K'$ on the generic fiber, and $P$ is the unique point with non-trivial stabilizer in the special fiber.

Assume that $\mathscr{Y}_{\widetilde{Q}}$ is regular at $\widetilde{Q}$, and hence regular everywhere. Then, since the generic fiber is $ \mathbb{P}^1_{K_P}$, we can find a rational point, which by properness extends to a section $\mathcal{S}$. By standard intersection theory $\mathcal{S}$ would intersect $(\mathscr{Y}_{\widetilde{Q}})_k$ transversely in a smooth point. But this contradicts that $(\mathscr{Y}_{\widetilde{Q}})_k$ has multiplicity $ \vert G_P \vert > 1 $.  
 
\end{proof}

The fact that our singularity is formally isomorphic to a singularity arising from the action by $G_P$ on a smooth proper model of $\mathbb{P}^1_{K'} $ will be crucial for the applications in later sections. Some other important facts concerning the singularity $ (\mathscr{Z}/G_P, \tilde{Q}) $ are listed below.

\begin{enumerate}
\item The extension $ K'/K_P $ is the \emph{unique} Galois extension which faithfully resolves $ (\mathscr{Z}/G_P, \tilde{Q}) $ \cite[Rmk.~3.8]{ObWe}.

\item The singularity $ (\mathscr{Z}/G_P, \tilde{Q}) $ is \emph{rational} \cite[Cor.~4.13]{ObWe}.

\end{enumerate}

\subsubsection{Resolution of weak wild quotient singularities}

Let $ Q \in \mathscr{Y}$ be the image of a point $P \in \mathscr{Z}_k$ with non-trivial stabilizer $G_P$. Then the  quotient map $\pi$ factors through the finite map 
$$ \mathscr{Z}/G_P \to \mathscr{Y} $$
which is \'etale at the image $\tilde{Q}$ of $P$. Hence
$$ \widehat{\mathcal{O}}_{Q} \to \widehat{\mathcal{O}}_{\tilde{Q}} $$
is an isomorphism.

Since the minimal resolution $(\mathscr{Y}, Q)' \to  \Spec(\widehat{\mathcal{O}}_{Q})$ only depends on $\widehat{\mathcal{O}}_{Q}$ (and similarly for $\tilde{Q}$)  the above isomorphism is covered by an isomorphism 
\begin{equation}\label{eq:minresiso}
    (\mathscr{Z}/G_P, \tilde{Q})' \to (\mathscr{Y}, Q)'
\end{equation}
Note that the source is the minimal resolution of a weak wild quotient singularity.

A detailed description of the minimal resolution of a weak wild quotient singularity is given in \cite[Thm.~7.8 + Figure 1]{ObWe}. We will not need the finer details in this paper, but we record some noteworthy consequences:

\begin{prop}\label{prop:elementary abelian}
If the action of $G$ on $\mathscr{Z}_k$ is weakly ramified, there exists a point with full stabilizer group $G_P = G$. In particular, the group $G$ is elementary abelian, and the curve $C$ has a rational point.
\end{prop}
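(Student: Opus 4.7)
The plan is to prove the existence of a point $P \in \mathscr{Z}_k$ with full stabilizer $G_P = G$ first; the elementary-abelian structure of $G$ and the rational point on $C$ will then follow essentially formally.

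\textbf{Step 1 (Existence of a $G$-fixed point).} Since $C$ has index one and $R$ is complete with algebraically closed residue field, $C$ admits a $K$-rational point. This is a standard fact: the index of $C$ equals the greatest common divisor of the multiplicities of the components of the special fiber of any regular proper model, so index one produces a multiplicity-one component $E$, and any smooth $k$-point of $E$ not lying on another component lifts by Hensel's lemma to an $R$-point, hence a $K$-point of $C$. Fix $x \in C(K)$. Its base change $x_{K'} \in C(K')$ is tautologically $G$-invariant, since for every $g \in G$ the composition $x_{K'} \circ g^{-1}_{\Spec K'}$ still factors through $\Spec K' \to \Spec K \xrightarrow{x} C$. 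By properness of $\mathscr{Z}/R'$ and the valuative criterion, $x_{K'}$ extends uniquely to an $R'$-section $\tilde{x} : \Spec R' \to \mathscr{Z}$; both $\tilde{x}$ and its $G$-translate $g \cdot \tilde{x}$ extend $x_{K'}$, so uniqueness forces $g \cdot \tilde{x} = \tilde{x}$ for all $g \in G$. As $G$ acts trivially on the residue field $k$, the reduction $P := \tilde{x}_k \in \mathscr{Z}_k(k)$ is $G$-fixed, yielding $G_P = G$.

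\textbf{Step 2 ($G$ is elementary abelian).} Apply the weak ramification hypothesis at $P$. On $\widehat{\mathcal{O}}_{\mathscr{Z}_k, P} \simeq k[[\tau]]$, the $G = G_P$ action has lower ramification filtration $G = G_{P,0} = G_{P,1}$ (since $G$ is a $p$-group, so $G_{P,0}/G_{P,1}$ is trivial) and $G_{P,i} = 0$ for $i \geq 2$. The standard injection
\[
G_{P,1}/G_{P,2} \hookrightarrow (k, +), \qquad g \longmapsto \tau^{-2}\bigl(g(\tau) - \tau\bigr) \bmod \tau,
\]
then realizes $G$ as a finite subgroup of the additive group of the algebraically closed field $k$; such a subgroup is necessarily elementary abelian.

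\textbf{Step 3 (Rational point of $C$).} This was already the input in Step 1. Alternatively, the $G$-invariant $R'$-section $\tilde{x}$ descends by Galois invariance to an $R$-point of $\mathscr{Y} = \mathscr{Z}/G$, whose generic fiber is a $K$-rational point of $C$.

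The core of the argument is Step 1; once in hand, the remaining implications are formal. The main subtlety, such as it is, lies in the clean use of valuative uniqueness to propagate Galois invariance from the generic fiber through the $R'$-section down to the special fiber.
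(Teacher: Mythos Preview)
Your Step 1 contains a genuine gap. You correctly recall that over a henselian DVR with algebraically closed residue field the index of $C$ equals $\gcd_i(n_i)$, the gcd of the multiplicities of the components of the special fiber of a regular model; but the inference ``so index one produces a multiplicity-one component $E$'' is a non sequitur. Having $\gcd_i(n_i)=1$ does not force any individual $n_i$ to equal $1$, and for curves of genus $\geq 2$ there are regular proper models over $\mathbb{C}[[t]]$ (via Winters' theorem) with all $n_i>1$ yet $\gcd_i(n_i)=1$; such curves have index $1$ but empty $C(K)$. Since the assertion $C(K)\neq\emptyset$ is one of the \emph{conclusions} of the proposition, using it as the input that produces the $G$-fixed point makes the argument circular.

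The paper avoids this by running the implication the other way. Assuming no $P\in\mathscr{Z}_k$ has $G_P=G$, one factors $\mathscr{Z}\to\mathscr{Z}/G_P\to\mathscr{Y}$ with the second map \'etale near the image of $P$; the minimal resolution over the corresponding singular point $Q\in\mathscr{Y}$ is then identified, via (\ref{eq:minresiso}), with that of the weak wild quotient singularity over the intermediate ring $R_P$. Consequently every exceptional multiplicity over $Q$, computed over $R$, is divisible by $[K_P:K]$, a nontrivial power of $p$. Together with the strict transform (of multiplicity $e=p^r$), all multiplicities of $\mathscr{Y}'_k$ are divisible by $p$, contradicting $\gcd_i(n_i)=1$. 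This uses exactly what index $1$ gives and no more. Only afterwards does the paper invoke \cite[Prop.~2.1]{ObWe} for elementary-abelianness and \cite[Cor.~7.16]{ObWe} to extract the $K$-rational point.

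Your Step 2 is fine and is essentially the content of \cite[Prop.~2.1]{ObWe}. And if a $K$-point on $C$ \emph{were} available a priori, your valuative-criterion argument would be an elegant way to manufacture a $G$-fixed point on $\mathscr{Z}_k$. But that hypothesis is part of what must be established, not something the standing index-one assumption supplies.
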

\begin{proof}
It suffices to show that there exists a point $P$ with stabilizer $G_P = G$, by \cite[Prop.~2.1]{ObWe}. Assume first that $P$ is a point with stabilizer $G_P \neq G$. Then the quotient map factors as
$$ \mathscr{Z} \to \mathscr{Z}/G_P \to \mathscr{Y}, $$
where the latter map is \'etale in a neighbourhood of the image $\tilde{Q} $ of $P$. Moreover, the extension $K_P/K$ has degree $p^m$ for some $m \geq 1$. 

Using the isomorphism (\ref{eq:minresiso}) between the minimal desingularizations, we conclude that each exceptional component over $Q$ has multiplicity divisible by $p$. Hence, if no point $P$ has full stabilizer group, the special fiber $\mathscr{Y}'_k$ itself is divisible by $p$ (recall that we already know that this holds for $\mathscr{Y}_k$). But this contradicts our assumption that $C$ has index one.

Let therefore $ P $ be a point with $G_P = G$. Then \cite[Prop.~2.1]{ObWe} implies that $G$ is elementary abelian. By \cite[Cor.~7.16]{ObWe} we can also conclude that in fact $ C(K) \neq \emptyset $.

\end{proof}

\subsection{Computation of $c_{\mathrm{tame}}(J)$ }

We let $ \mathcal{Y}' \to \mathcal{Y} $ denote the minimal resolution of singularities. It is an isomorphism outside of $ \mathrm{Sing}(\mathcal{Y}) = \{ Q_1, \ldots, Q_m \}$. We also know that $ \mathcal{Y}_{k, red} $ is irreducible, and smooth away from the $Q_i$-s. In particular $ \mathcal{Y}' $ is an sncd-model of $C$, so the number of nodes $ E(\mathcal{Y}') $ as well as the virtual number of nodes $ R(\mathcal{Y}') $ introduced in Subsection \ref{subsec:sncd-prelim} are defined.

\begin{theorem}\label{thm:wildquotctame}
For the minimal resolution $ \mathcal{Y}' $ of $ \mathcal{Y} $, the equality 
$$ R(\mathcal{Y}') = E(\mathcal{Y}') $$
holds. In particular, 
$$ c_{\mathrm{tame}}(J) = \frac{u}{2}. $$ 
\end{theorem}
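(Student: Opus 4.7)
The plan is to apply Corollary \ref{cor:nodediff}, which reads $c_{\tame}(J) = u/2 - (R(\mathcal{Y}') - E(\mathcal{Y}'))/4$. The second assertion is then an immediate consequence of the first, so I focus on proving $R(\mathcal{Y}') = E(\mathcal{Y}')$.

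The strict transform $\widetilde{E}$ of $\mathcal{Y}_{k,\red}$ in $\mathcal{Y}'$ is irreducible (being the image of the irreducible smooth curve $\mathcal{Z}_k$) and smooth away from $\mathrm{Sing}(\mathcal{Y})$ by Lemma \ref{lem:quotfiberprops}. Since $\mathcal{Y}'$ is an sncd-model, every node of $\mathcal{Y}'_{k,\red}$ therefore lies above some $Q \in \mathrm{Sing}(\mathcal{Y})$, and I can decompose $R(\mathcal{Y}') - E(\mathcal{Y}') = \sum_{Q} (R_Q - E_Q)$, where $R_Q - E_Q$ collects the contribution of the nodes sitting above $Q$. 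The task then reduces to showing $R_Q - E_Q = 0$ for each singular point $Q$.

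Fix $Q = \pi(P)$ with non-trivial stabilizer $G_P$. By Lemma \ref{lemma:hasweakquotsing} and the identification \eqref{eq:minresiso}, the completion $\widehat{\mathcal{O}}_{\mathcal{Y}, Q}$ is isomorphic to $\widehat{\mathcal{O}}_{\mathscr{Y}_{\tilde Q}, \tilde Q}$, where $\mathscr{Y}_{\tilde Q} = \mathscr{Z}_P / G_P$ is the Obus--Wewers global model of $\mathbb{P}^1_{K_P}$. The minimal sncd resolutions of the two singularities are thereby identified, and in particular share the same exceptional configuration together with its intersection pattern with the strict transform. The multiplicity of any component $F$ above $Q$ in $\mathcal{Y}'_k$ is the order along $F$ of a uniformizer $\pi_R$ of $R$, whereas the corresponding multiplicity in $\mathscr{Y}'_{\tilde Q, k}$ is the order along $F$ of a uniformizer $\pi_P$ of $R_{K_P}$. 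Since $\pi_R$ and $\pi_P^{[K_P : K]}$ differ by a unit, all multiplicities over $Q$ in $\mathcal{Y}'$ equal those in $\mathscr{Y}'_{\tilde Q}$ scaled by the common factor $[K_P : K]$ (this is consistent with the ratio $e/|G_P|$ between the multiplicities $e$ and $|G_P|$ of the strict transform in the two models). Crucially, the summand $(n^2 + n'^2 + (n, n')^2)/(3 n n')$ attached to each node is invariant under simultaneous scaling of $n$ and $n'$, so $R_Q - E_Q$ equals the corresponding local quantity computed on $\mathscr{Y}'_{\tilde Q}$.

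To finish, observe that $\mathscr{Y}'_{\tilde Q}$ is an sncd-model of $\mathbb{P}^1_{K_P}$, whose Jacobian is trivial. Applying Corollary \ref{cor:nodediff} to this setup (the formula depends only on the labelled dual graph and hence extends to genus zero via Winters' theorem, as in the proof of Theorem \ref{thm:tameformula}) yields $0 = c_\tame = u/2 - (R - E)/4$, forcing $R(\mathscr{Y}'_{\tilde Q}) = E(\mathscr{Y}'_{\tilde Q})$. Since all nodes of $\mathscr{Y}'_{\tilde Q, k, \red}$ are concentrated above $\tilde Q$, this vanishing is exactly $R_Q - E_Q = 0$, and summing over $Q$ delivers the desired identity. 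The main subtle point is to ensure that the exceptional multiplicities over $Q$ depend only on the formal neighborhood of $Q$; fortunately, the valuative characterization above makes this transparent, and the rest of the argument is essentially bookkeeping.
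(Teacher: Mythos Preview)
Your argument is correct and follows essentially the same route as the paper: localize $R-E$ to the singular points, pass via the formal isomorphism to the Obus--Wewers model $\mathscr{Y}_{\tilde Q}$ of $\mathbb{P}^1_{K_P}$, observe that the node contributions are invariant under the common multiplicity rescaling by $[K_P:K]$, and conclude $R_Q=E_Q$ from the corresponding statement on an sncd-model of $\mathbb{P}^1$.

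The only point of divergence is the final step. The paper argues directly that $R-E$ is invariant under blowing up closed points in the special fiber, hence equal to its value on the minimal model of $\mathbb{P}^1$, where it vanishes trivially. You instead invoke Corollary~\ref{cor:nodediff} for the genus-zero curve $\mathbb{P}^1_{K_P}$. This is fine in spirit, but your justification (``extends to genus zero via Winters' theorem'') is slightly off: Winters is used in the paper to transport a labelled graph to characteristic zero, not to extend the formula to $g=0$. What you actually need is that the computation in Proposition~\ref{cor-main} goes through verbatim for $g=0$ (it does, since nothing in that proof uses $g>0$), or simply the elementary blow-up invariance the paper uses. Either way the conclusion stands.
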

\begin{proof}
The expressions $ R(\mathcal{Y}') $ and $ E(\mathcal{Y}') $ are in an obvious way sums of local expressions $R_{Q}$ and $E_{Q}$ associated with the minimal resolution 
$$ (\mathscr{Y}, Q)' \to  \Spec(\widehat{\mathcal{O}}_{Q}) $$
at each singular point $Q$. It therefore suffices to show that $ R_{Q} = E_{Q} $. 

The isomorphism $(\mathscr{Z}/G_P, \tilde{Q})' \to (\mathscr{Y}, Q)'$ to instead compute $R_{\tilde{Q}}$ and $E_{\tilde{Q}}$. To be clear, $(\mathscr{Z}/G_P, \tilde{Q})'$ is an $R_P$-scheme, so the multiplicities in the special fibers of the source and target differ by a factor $ \vert G/G_P \vert $. But an elementary computation shows that 
this factor cancels out and we find $R_{\tilde{Q}} = R_{Q}$. 

We can therefore replace $(\mathscr{Z}/G_P, \tilde{Q})$ by $(\mathscr{Y}_{\tilde{Q}}, \tilde{Q})$, where we use the notation introduced in the proof of Lemma \ref{lemma:hasweakquotsing}. Recall that $ \mathscr{Y}_{\tilde{Q}} $ is a normal proper $R$-model of $ \mathbb{P}^1_{K_P}$ with $ \tilde{Q} $ the unique singular point (and $(\mathcal{Y}_{\tilde{Q}})_{k, red}$ is smooth outside $ \tilde{Q} $ as well). Then the minimal resolution $\mathscr{Y}_{\tilde{Q}}' \to \mathscr{Y}_{\tilde{Q}} $ yields a a proper regular sncd-model of $ \mathbb{P}^1_{K_P} $. We claim that this fact implies that $R_{\tilde{Q}} = E_{\tilde{Q}} $.

Indeed, we can find a successive contraction 
$$ \rho \colon \mathscr{Y}_{\tilde{Q}}' \to ( \mathscr{Y}_{\tilde{Q}})_{\mathrm{min}} $$
of $(-1)$-curves intersecting the other components in at most two points, with $(\mathscr{Y}_{\tilde{Q}})_{\mathrm{min}}$ a relatively minimal model, and its (irreducible) special fiber is reduced and isomorphic to $ \mathbb{P}^1_k $. 
The identity $ R((\mathcal{Y}_{\tilde{Q}} )_{\mathrm{min}}) = E((\mathcal{Y}_{\tilde{Q}})_{\mathrm{min}}) $ therefore trivially holds. Since $ \rho $ is a composition of blow-ups in closed points in the special fiber, the analogous identity holds also for $\mathcal{Y}_{\tilde{Q}} $, since the difference 
$$ R(\mathcal{Y}_{\tilde{Q}}) - E(\mathcal{Y}_{\tilde{Q}}) $$
is independent of the choice of sncd-model.

The second statement now follows from Corollary \ref{cor:nodediff}.

\end{proof}

\subsection{Potential ordinary reduction}
In this subsection we keep the assumptions at the beginning of this section, but we do \emph{not} assume that the $G$-action on $\mathcal{Z}_k$ is weakly ramified. Rather, we shall prove that this property holds when we assume that $\mathcal{Z}_k$ is \emph{ordinary}, \emph{i.e.}, when $\Jac(\mathcal{Z}_k)$ has maximal $p$-rank.

\subsubsection{Riemann-Hurwitz formula}\label{subsubsec:RH}

For simplicity, we write $D = \mathcal{Z}_k$ and $ \overline{D} = \mathcal{Z}_k/G$, and consider the quotient map
$$ \phi \colon D \to \overline{D}. $$
For any (closed) point $ P \in D $, the stabilizer $G_P$ acts on $ \hat{\mathcal{O}}_{D,P} \cong k[[x]]$, with associated filtration of higher ramification groups
$$ G_P = G_{P,0} = G_{P, 1} \supseteq \ldots \supseteq G_{P, i} \supseteq \ldots $$

We put
$$ d_P = \sum_{i \geq 0} (\vert G_{P,i} \vert - 1)$$
Since $G_P$ is a $p$-group, we can write
$$ d_P = 2 \vert G_P \vert - 2 + \epsilon_P, $$
where $\epsilon_P \geq 0 $. In this notation, the Riemann-Hurwitz  formula can be written
\begin{equation}\label{eq:RH} 2g - 2 = \vert G \vert (2 \overline{g} - 2) + \sum_{P \in D} d_P. 
\end{equation}

Clearly, the action is weakly ramified at $ P $ if and only if $ \epsilon_P = 0$. If this holds for each point, i.e.,  the action of $G$ on $D$ is weakly ramified, the formula \eqref{eq:RH} takes the simpler form
\begin{equation}\label{eq:RH2}
    2g - 2 = \vert G \vert (2 \overline{g} - 2) + \sum_{P \in E} \left(2 \vert G_P \vert - 2\right). 
\end{equation} 

\subsubsection{Deuring-Shafarevich formula}\label{subsubsec:DS}

Let $ O_P $ be the orbit of $P$. Then $ \vert G \vert = \vert G_P \vert \cdot \vert O_P \vert $ by the orbit-stabilizer theorem. Moreover, for any $P' \in O_P$, it is elementary that $ \vert G_P \vert = \vert G_{P'} \vert $.

We denote by $ \gamma$ the $p$-rank of (the Jacobian of) $D$, and by $ \overline{\gamma} $ the $p$-rank of $\overline{D}$. In this notation, the Deuring-Shafarevich   formula states that
\begin{equation}\label{eq:DS}\gamma - 1 = \vert G \vert ( \overline{\gamma} - 1) + \sum_{i} (\vert G \vert - \vert O_i \vert), 
\end{equation}
where the sum runs over the \emph{small} orbits $O_i$ (i.e., $\vert G \vert - \vert O_i \vert > 0$).

\subsubsection{Potential ordinary reduction}
The next lemma generalizes a result by Lorenzini \cite[Subs.~2.2]{Lor14}.

\begin{lemma}\label{lemma-nohigherram}
Assume that $D$ is \emph{ordinary}, \emph{i.e.}, that  $g = \gamma $. Then $ \epsilon_P = 0 $ for all points $ P \in D $.
\end{lemma}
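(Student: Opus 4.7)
The plan is to compare the Riemann--Hurwitz formula \eqref{eq:RH} and the Deuring--Shafarevich formula \eqref{eq:DS} by subtracting one from the other. Since both $\epsilon_P \geq 0$ and the genus defect $g - \gamma \geq 0$ (and similarly downstairs), the ordinariness hypothesis $g = \gamma$ will force the difference to vanish term by term.

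More precisely, I would first rewrite \eqref{eq:RH} by separating the weakly ramified contribution from the wild excess: using $d_P = 2(|G_P|-1) + \epsilon_P$, halving gives
\begin{equation*}
g - 1 \;=\; |G|(\overline{g} - 1) \;+\; \sum_{P \in D}(|G_P| - 1) \;+\; \tfrac{1}{2}\sum_{P \in D}\epsilon_P.
\end{equation*}
The key observation is that the middle sum, grouped by orbits and using the orbit--stabilizer theorem $|G| = |G_P|\cdot |O_P|$, is precisely the sum appearing in Deuring--Shafarevich: a point with non-trivial stabilizer contributes $|O_P|(|G_P|-1) = |G| - |O_P|$, while free orbits contribute zero and are exactly the non-small orbits. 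Thus
\begin{equation*}
\sum_{P \in D}(|G_P| - 1) \;=\; \sum_{\text{small orbits } O_i}\bigl(|G| - |O_i|\bigr).
\end{equation*}

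Subtracting \eqref{eq:DS} from the rewritten \eqref{eq:RH} then yields the clean identity
\begin{equation*}
g - \gamma \;=\; |G|\bigl(\overline{g} - \overline{\gamma}\bigr) \;+\; \tfrac{1}{2}\sum_{P \in D}\epsilon_P.
\end{equation*}
The left-hand side vanishes by hypothesis, and on the right both terms are non-negative (since the $p$-rank is always bounded above by the genus, and $\epsilon_P \geq 0$ by construction). Hence each $\epsilon_P = 0$, which is what we wanted to show (and as a bonus one sees that $\overline{D}$ is automatically ordinary as well). I do not anticipate any real obstacle here; the only step requiring care is the orbit-theoretic reindexing that turns the $\sum(|G_P|-1)$ in Riemann--Hurwitz into the small-orbit sum of Deuring--Shafarevich, but this is a direct application of orbit--stabilizer.
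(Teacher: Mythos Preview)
Your proof is correct and follows essentially the same approach as the paper: both combine the Riemann--Hurwitz and Deuring--Shafarevich formulas, use orbit--stabilizer to match the ramification terms, and conclude by non-negativity. Your write-up is in fact slightly more explicit, since you state the identity $g-\gamma = |G|(\overline{g}-\overline{\gamma}) + \tfrac{1}{2}\sum_P \epsilon_P$ and invoke $\overline{g}\geq\overline{\gamma}$ directly, whereas the paper leaves this last inequality implicit.
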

\begin{proof}
Let $ O_i $ be a small orbit. Since $ \vert G_P \vert = \vert G(i) \vert $ for every $ P \in O_i $, we can write
$$ \sum_{P \in O_i} d_P = (2 \vert G(i) \vert - 2 ) \vert O_i \vert + \epsilon_i$$
for some $ \epsilon_i \geq 0 $. Combining the formulas \eqref{eq:RH2} and \eqref{eq:DS} , we therefore find (since $ \vert G \vert  = \vert G(i) \vert \cdot \vert O_i \vert $) that
$$ \vert G \vert (2 \overline{g} - 2) + 2 \sum_{i} (\vert G \vert - \vert O_i \vert) + \sum_{i} \epsilon_i = \vert G \vert ( 2 \overline{\gamma} - 2) + 2 \sum_{i} (\vert G \vert - \vert O_i \vert). $$

In conclusion $ \epsilon_i = 0 $ for every $i$, and hence $ \epsilon_P = 0 $ for every $ p \in D $. 
\end{proof}

As an immediate consequence of Lemma \ref{lemma-nohigherram}, we see that a curve that acquires potential good ordinary reduction after a purely wild (minimal) extension provides an example of weakly ramified action. 

\begin{prop}\label{prop-weakwildsing}
If $\mathcal{Z}_k$ is ordinary, the $G$-action on $\mathcal{Z}_k$ is weakly ramified.   
\end{prop}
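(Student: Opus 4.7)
The plan is to deduce Proposition \ref{prop-weakwildsing} as an essentially immediate consequence of Lemma \ref{lemma-nohigherram}, which has already done the real work. The key observation is that, in our setting, the vanishing of the invariants $\epsilon_P$ from \ref{subsubsec:RH} is equivalent to the weakly ramified condition.

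First, I would unpack how $\epsilon_P$ decomposes in terms of the higher ramification groups. Because $K'/K$ is purely wild, $G$ is a $p$-group, and so is every stabilizer $G_P$. Since the residue field $k$ is algebraically closed of characteristic $p$, the quotient $G_{P,0}/G_{P,1}$ embeds into $k^{\times}$; being simultaneously a $p$-group and of order prime to $p$, it must be trivial. Therefore $G_P = G_{P,0} = G_{P,1}$, and expanding the definition of $d_P$ gives
\[
d_P = (|G_P|-1) + (|G_P|-1) + \sum_{i \geq 2}(|G_{P,i}| - 1) = 2|G_P|-2 + \sum_{i \geq 2}(|G_{P,i}|-1).
\]
Comparing with the expression $d_P = 2|G_P|-2 + \epsilon_P$ yields $\epsilon_P = \sum_{i \geq 2}(|G_{P,i}| - 1)$. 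Since each summand is non-negative, we conclude that $\epsilon_P = 0$ if and only if $G_{P,i} = \{1\}$ for all $i \geq 2$, which is precisely the condition that the action of $G_P$ on $\widehat{\mathcal{O}}_{\mathcal{Z}_k,P}$ is weakly ramified.

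Second, I would invoke Lemma \ref{lemma-nohigherram}: the assumption that $\mathcal{Z}_k$ is ordinary is exactly the hypothesis $g = \gamma$ of that lemma, so it yields $\epsilon_P = 0$ for every closed point $P \in \mathcal{Z}_k$. By the equivalence established in the previous paragraph, this means that every $G_P$ acts weakly ramifiedly on $\widehat{\mathcal{O}}_{\mathcal{Z}_k,P}$, so the $G$-action on $\mathcal{Z}_k$ is weakly ramified in the sense of \ref{subsec:weakwildquot}.

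There is no real obstacle here: the numerical core of the argument is already contained in Lemma \ref{lemma-nohigherram}, and all that remains for this proposition is the translation between the numerical equality $\epsilon_P = 0$ and the geometric weakly-ramified condition via the standard fact that the tame quotient $G_{P,0}/G_{P,1}$ vanishes for $p$-group actions in residue characteristic $p$.
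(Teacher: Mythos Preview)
Your proposal is correct and is essentially the paper's own argument: the paper already records (in \ref{subsubsec:RH}) that the action is weakly ramified at $P$ if and only if $\epsilon_P = 0$, and then states the proposition as an immediate consequence of Lemma \ref{lemma-nohigherram}. You have simply spelled out the equivalence $\epsilon_P = 0 \Leftrightarrow G_{P,i}=1$ for $i\geq 2$ more explicitly than the paper does, which is fine.
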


Explicit examples are easy to produce for elliptic curves, see \cite[Thm.~4.2]{lorenzini-wildsurfacequot} for a discussion. For the higher genus case, see \cite[Thm.~6.8]{Lor14} for a detailed treatment when $ [K':K] = p$.

\section{Milnor numbers of weak wild quotient singularities}
In this section, we keep the assumptions and notation from \ref{subsec:weakwildquot}. In particular, the extension $K'/K$ is purely wild, and the Galois action on the special fiber of the good model $\mathscr{Z}$ is weakly ramified. 

In Theorem \ref{Theorem:mainSwan} below we explicitly compute the Milnor number of a weak wild quotient singularity. Since these are the only singularities that can occur on the quotient $ \mathscr{Y} $, we obtain an expression for the term $ \mu = \sum_{Q \in \mathscr{Y} } \mu_Q $ appearing in the formula for the base change conductor in Corollary \ref{cor:basechconformula}.

\subsection{Computation of Milnor numbers}
Let $ Q \in \mathscr{Y} $ be the image of a point $ P \in \mathscr{Z} $ with non-trivial stabilizer $G_P$.
By Lemma \ref{lemma:Milnordepend} the Milnor number $\mu_Q$ only depends on $ \widehat{\mathcal{O}}_{\mathscr{Y}, Q}$. For the purpose of computing $\mu_Q$, we can therefore replace $\mathscr{Y}$ with the normal proper model $\mathscr{Y}_{\widetilde{Q}}$ (notation as in Theorem \ref{thm:wildquotctame}) of $\mathbb{P}^1_{K_P}$, which is singular only at $ \widetilde{Q} $.  

\subsubsection{}
We next present an explicit formula for $\mu_{ \widetilde{Q} }$. We use the notation $  e_P = [K' : K_P]$ and $S_P = \mathrm{Spec}(R_P)$.

\begin{theorem}\label{Theorem:mainSwan}
The Milnor number of $ \widetilde{Q} $ is
$$ \mu_{ \widetilde{Q} } = 4\left(1 - \frac{1}{e_P} + \frac{\mathbf{sw}_{K'/K_P}}{e_P}\right). $$
\end{theorem}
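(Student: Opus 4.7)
The strategy is to compute $\mu_{\widetilde Q}$ globally on $\mathscr{Y}_{\widetilde Q}$, using the Deligne--Riemann--Roch formalism for the (trivial) Jacobian of $\mathbb{P}^1_{K_P}$.

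By Lemma \ref{lemma:Milnordepend}, $\mu_{\widetilde Q}$ depends only on the completion $\widehat{\mathcal{O}}_{\mathscr{Y}_{\widetilde Q},\widetilde Q}$, so I can compute it globally on the $R_P$-model $\mathscr{Y}_{\widetilde Q}$ of $\mathbb{P}^1_{K_P}$. This model is normal and $\mathbb{Q}$-Gorenstein, its unique singularity $\widetilde Q$ is rational by \cite[Cor.~4.13]{ObWe}, and its normalized base change to $R'$ is the smooth model $\mathscr{Z}_P$. Since the Jacobian of $\mathbb{P}^1$ is trivial, the base change conductor vanishes, and the sum $\mu_{\mathscr{Y}_{\widetilde Q}}$ in Corollary \ref{cor:basechconformula} \eqref{item3:corbasechconformula} reduces to the single term $\mu_{\widetilde Q}$. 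Applying that corollary (whose derivation, via Proposition \ref{thm:reftakeshi}, remains valid in the degenerate case $g=0$, where both sides of the underlying determinant-of-cohomology identity are trivial), I obtain
$$\mu_{\widetilde Q} \;=\; \frac{2}{e_P}\, \Gamma \cdot \pi^{*}\omega_{\mathscr{Y}_{\widetilde Q}/S_P} \;+\; \mathrm{Art}_{\mathscr{Y}_{\widetilde Q}/S_P}.$$

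The two terms on the right are then computed explicitly. For the Artin conductor, Corollary \ref{Cor:ArtCond} gives $\mathrm{Art}_{\mathscr{Y}_{\widetilde Q}/S_P} = \chi(\mathscr{Z}_{P,k}) - \chi(\mathscr{Z}_{P,k}/G_P) - \mathrm{Sw}(\mathbb{P}^1_{K_P})$; since $\mathscr{Z}_{P,k} \cong \mathbb{P}^1_k$, since any finite-group quotient of $\mathbb{P}^1_k$ is again $\mathbb{P}^1_k$, and since $H^1(\mathbb{P}^1,\mathbb{Q}_\ell) = 0$, this reduces to $2-2-0=0$. For the intersection term, Corollary \ref{cor:discdiv} gives $\Gamma = (1 - e_P - \mathbf{sw}_{K'/K_P})\,\mathscr{Z}_{P,k}$; combined with $\mathscr{Z}_{P,k}^{2} = 0$, the identity $\omega_{\mathscr{Z}_P/S'} = \pi^{*}\omega_{\mathscr{Y}_{\widetilde Q}/S_P} + \Gamma$ yields $\pi^{*}\omega_{\mathscr{Y}_{\widetilde Q}/S_P} \cdot \mathscr{Z}_{P,k} = \omega_{\mathscr{Z}_P/S'} \cdot \mathscr{Z}_{P,k} = \deg \omega_{\mathbb{P}^1_k} = -2$ by adjunction. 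Hence $\Gamma \cdot \pi^{*}\omega_{\mathscr{Y}_{\widetilde Q}/S_P} = 2(e_P - 1 + \mathbf{sw}_{K'/K_P})$, and substituting gives the claimed identity.

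The main obstacle is conceptual rather than computational: one must recognize that although Corollary \ref{cor:basechconformula} \eqref{item3:corbasechconformula} is phrased for the Jacobian of a positive-genus curve, it is ultimately a Deligne--Riemann--Roch identity that holds, with $c(J)=0$ on the left, for any smooth curve including $\mathbb{P}^1$. Once this reduction is in place, the remaining calculations are short, because the geometry on $\mathscr{Z}_P$ is as simple as possible (a smooth $\mathbb{P}^1_k$ special fiber with vanishing self-intersection), and the only nontrivial arithmetic input is the Swan conductor $\mathbf{sw}_{K'/K_P}$, which enters through the discrepancy formula of Corollary \ref{cor:discdiv}.
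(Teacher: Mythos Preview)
Your proof is correct and follows essentially the same approach as the paper: both apply Corollary \ref{cor:basechconformula} \eqref{item3:corbasechconformula} to the model $\mathscr{Y}_{\widetilde Q}$ of $\mathbb{P}^1_{K_P}$ with $c(J)=0$, then compute the discrepancy term via Corollary \ref{cor:discdiv} and observe that the Artin conductor vanishes. Your version is slightly more explicit in spelling out the intersection calculation and the justification for $\mathrm{Art}_{\mathscr{Y}_{\widetilde Q}/S_P}=0$, and you rightly flag the genus-zero applicability of the formula, which the paper uses without comment.
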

\begin{proof}
Write $ J_{ \widetilde{Q} } $ for the Jacobian of $\mathbb{P}^1_{K_P}$. Then $c(J_{ \widetilde{Q} })=0$ and the formula in Corollary \ref{cor:basechconformula} takes the form 
$$ 0 = \frac{2}{e_P} \Gamma \cdot \pi^* K_{\mathscr{Y}_{ \widetilde{Q} }/S_P} + (\mathrm{Art}_{\mathscr{Y}_{ \widetilde{Q} }/S_P} - \mu_{ \widetilde{Q} }). $$

Using Corollary \ref{cor:discdiv}, we compute
$$  \frac{2}{e_P} \Gamma \cdot \pi^* K_{\mathscr{Y}_{ \widetilde{Q} }/S_P} = 4 \left( 1 - \frac{1}{e_P} +  \frac{\mathbf{sw}_{K'/K_P}}{e_P} \right). $$
We moreover have that $ \mathrm{Art}_{\mathscr{Y}_{ \widetilde{Q} }/S_P} = 0 $,  since $(\mathscr{Y}_{ \widetilde{Q} })_{k, red}$ is rational. 
%This immediately yields the asserted formula for $\mu_{ \widetilde{Q} }$.
\end{proof}

\begin{remark}
The above formula for the Milnor number can also be established more directly, without Corollary \ref{cor:basechconformula}. In fact, an alternative proof can be obtained by combining certain fundamental properties of the Deligne bracket (behaviour under basechange and birational maps, respectively) with standard results on proper and normal models of $\mathbb{P}^1$.
\end{remark}

\subsubsection{}

Now we return to the global situation $ \pi \colon \mathscr{Z} \to \mathscr{Y}$.
\begin{definition}\label{def:pointtypei}
 We say that a singular point $Q$ is of type $i$ if $\vert G_P \vert = p^{r-i} $ for every $P \in \mathscr{Z} $ mapping to $Q$.    
\end{definition}

For each $i \in \{0, \ldots, r-1\}$, let $ Q_{i,j} $ be the points of type $i$, where $ 1 \leq j \leq m_i $. If $ P \mapsto Q_{i,j} $, the extension $K' \supset K_P $ depends only on $Q_{i,j} $ as $G$ is abelian by Proposition \ref{prop:elementary abelian}, so the stabilizers coincide for all points in the same $G$-orbit. In particular, the corresponding Swan conductor $\mathrm{Sw}_{K'/K_P}$ will be denoted $\mathbf{sw}_{Q_{i,j}}  $. 

In this notation, Theorem \ref{Theorem:mainSwan} therefore yields:

\begin{prop}\label{Prop:Swansum}
The sum of all Milnor numbers of the singularities of $\mathscr{Y}$ is
$$ \mu = 4 \sum_{i=0}^{r-1} m_i \left(1 - \frac{1}{p^{r-i}}\right) + 4 \sum_{i=0}^{r-1} \sum_{j=1}^{m_i} \frac{\mathbf{sw}_{Q_{i,j}}}{p^{r-i}}. $$

\end{prop}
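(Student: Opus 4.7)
The plan is to obtain the formula as a direct bookkeeping consequence of Theorem \ref{Theorem:mainSwan} applied pointwise. First, I would recall that the singular locus of $\mathscr{Y}$ is exactly the image of the set of points $P \in \mathscr{Z}$ with non-trivial stabilizer, and by Definition \ref{def:pointtypei} it is indexed by the $Q_{i,j}$ with $0 \le i \le r-1$ and $1 \le j \le m_i$. For a point $Q_{i,j}$ of type $i$, any preimage $P \in \mathscr{Z}$ has $|G_P| = p^{r-i}$, so that $e_P = [K':K_P] = p^{r-i}$. Since $G$ is elementary abelian by Proposition \ref{prop:elementary abelian}, the stabilizers along a single $G$-orbit agree, and hence the Swan conductor $\mathbf{sw}_{K'/K_P}$ depends only on the image $Q_{i,j}$, justifying the well-definedness of the value $\mathbf{sw}_{Q_{i,j}}$ fixed just before the proposition.

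Next, I would invoke the identification $\mu_Q = \mu_{\widetilde{Q}}$ already exploited at the beginning of Section 10: by Lemma \ref{lemma:Milnordepend} the Milnor number only depends on the completion $\widehat{\mathcal{O}}_{\mathscr{Y},Q}$, and by Lemma \ref{lemma:hasweakquotsing} this completion coincides with that of the singular point of the local model $\mathscr{Y}_{\widetilde{Q}}$ of $\mathbb{P}^1_{K_P}$. Applying Theorem \ref{Theorem:mainSwan} with $e_P = p^{r-i}$ therefore gives
$$\mu_{Q_{i,j}} = 4\left(1 - \frac{1}{p^{r-i}}\right) + \frac{4\,\mathbf{sw}_{Q_{i,j}}}{p^{r-i}}.$$
Summing over all singular points, grouped by type, and splitting the result into the two indicated sums yields exactly the formula in the statement.

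There is no genuine obstacle here beyond organizing the indexing; the entire analytic content is packaged in Theorem \ref{Theorem:mainSwan}. The only points that require care are the uniformity of $\mathbf{sw}_{Q_{i,j}}$ within a $G$-orbit (secured by Proposition \ref{prop:elementary abelian}) and the assertion that types partition the singular locus, which follows directly from Definition \ref{def:pointtypei} together with the fact that stabilizers of points in one $G$-orbit are conjugate and hence equal in the abelian setting.
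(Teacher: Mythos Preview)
Your proposal is correct and follows exactly the approach of the paper, which records the proposition as an immediate consequence of Theorem~\ref{Theorem:mainSwan} without further argument. You have simply made explicit the bookkeeping (the partition of the singular locus by type, the identification $\mu_Q=\mu_{\widetilde{Q}}$ via Lemma~\ref{lemma:Milnordepend}, and the well-definedness of $\mathbf{sw}_{Q_{i,j}}$ via Proposition~\ref{prop:elementary abelian}) that the paper leaves implicit in the sentence ``In this notation, Theorem~\ref{Theorem:mainSwan} therefore yields.''
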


\subsubsection{}
Our strategy in the weak wild case can be pursued for computing $\mu_Q$ and $\nu_Q$ also for other types of wild quotient singularities that are formally isomorphic to the unique singularity of a proper normal model. We provide below an example in the setting of elliptic curves. 

\begin{example}
 In this example, $p = 3$, and $E$ will be an elliptic curve with additive reduction that acquires good reduction after an extension $K'/K$ of degree $3$. Then $ G = \mathbb{Z}/3 $ acts on the good model $\mathcal{E}'$ of $E \times_K K'$. Since $p=3$ and $(\mathcal{E}')_k$ is an elliptic curve with an order $3$ automorphism, it has $j$-invariant $0$ and there is a unique fixed point, denoted $P$.

The quotient $\mathcal{E}$ is regular except for (possibly) at the image $Q$ of $P$. Note that $ \mathcal{E}_{k, red}$ is necessarily rational since $E$ has additive reduction, so consequently
$$ \mathrm{Art}_{\mathcal{E}/R} = - 2 - \mathrm{Sw}(E). $$  
We, therefore, obtain from Proposition \ref{thm:reftakeshi} the formula 
$$ c(E) = \frac{1}{6} + \frac{1}{12} \mathrm{Sw}(E) + \frac{1}{12} \nu_Q. $$ 

For elliptic curves the base change conductor has been computed in full generality (see \cite[7.2]{HaNi-book}). 
Since $E$ has wild potential good reduction, and $p=3$, it is known that $ c_{\wild}(E) = \frac{1}{12} \mathrm{Sw}(E)$. Thus we find
$$ \nu_Q =  12 c_{\tame}(E) - 2. $$  

The exact value of $c_{\tame}(E)$ depends on the reduction type of the minimal regular model of $E$. Since $E$ is wildly ramified, only $II$, $II^*$, $ IV$ or $IV^*$ in the Kodaira classification could occur, in which case $\nu_Q$ would have value $0$, $8$, $ 2 $ or $ 6$, respectively.

\end{example}

\subsection{The discrepancy divisor}

It is a remarkable fact that in our computation of $\mu_Q$ in Theorem \ref{Theorem:mainSwan}, we never needed to explicitly know the discrepancy divisor $\Gamma_Q$. However, for other applications, it might be desirable to have an expression for $\Gamma_Q$. For the sake of context, we discuss what happens in the $p$-cyclic case. We do not include proofs, as these results are not needed later in the paper.

\subsubsection{}
Assume that $ e = p $, and let $ Q $ be a weak wild quotient singularity. Then, in the terminology of \cite{ObWe}, $Q$ is of type $(r,s)$ and the minimal resolution of $Q$ can be described as follows. There is a unique node, \emph{i.e.} a vertex of valency $3$, with three chains emanating from it: 
\begin{enumerate} 
\item There is one chain of ($-2$)-curves $D_j$ for $ j \in \{2, \ldots, sp \} $. 
\item There is one chain of curves, denoted $ E_i $, of cyclic quotient singularity type, corresponding to the numerical data $ r_0 = p $, a certain $ 0 < r_1 < r_0 $ and 
$$ \frac{r_0}{r_1} = [a_1, \ldots, a_l]. $$
We denote by $P_i$ the universal polynomials introduced in Section \ref{sec:tamegoodred}. 
\item The last chain of curves, denoted $E_{-i}$ is also of cyclic quotient singularity type, corresponding to the numerical data $ r_0 = p $, a certain $ 0 < r_{-1} < r_0 $ and 
$$ \frac{r_0}{r_{-1}} = [a_{-1}, \ldots, a_{-m}]. $$
We denote by $P_{-i}$ the universal polynomials for this chain.
\end{enumerate}
The unique node will be labeled $ D_1 = E_0 $. The three vertices at the end of the chains are denoted $ D_{ps} $, $E_l$, and $ E_{-m}$: 
\begin{center}
\begin{tikzpicture}[scale=1] % Adjust the scale to make the diagram smaller
    % Define styles
    \tikzset{dot/.style={circle,fill,minimum size=3pt,inner sep=0pt,outer sep=0pt}}

    % Central node
    \node[dot, label=below:$E_0$] (center) at (0,0) {};

    % Nodes and paths
    \node[dot] (downright1) at (0.75,-0.75) {}; % Adjusted positions
    \node[dot] (downright2) at (1.5,-0.75) {};
    \node[dot] (downright3) at (2.25,-0.75) {};
    \node[dot, label=below:$E_{-m}$] (downright4) at (3,-0.75) {};
    \node[dot] (upright1) at (0.75,0.75) {};
    \node[dot] (upright2) at (1.5,0.75) {};
    \node[dot] (upright3) at (2.25,0.75) {};
    \node[dot, label=above:$E_l$] (upright4) at (3,0.75) {};
    \node[dot] (left1) at (-0.75,0) {};
    \node[dot] (left2) at (-1.5,0) {};
    \node[dot] (left3) at (-2.25,0) {};
    \node[dot, label=above:$D_{ps}$] (left4) at (-3,0) {};

    % Draw paths
    \draw (center) -- (downright1) -- (downright2);
    \draw (center) -- (upright1) -- (upright2);
    \draw (center) -- (left2);
    \draw (left3) -- (left4);

    \draw (downright3) -- (downright4) ;
    \draw (upright3) -- (upright4) ;

    % Dots indicating continuation
    \draw[dotted] (downright2) -- +(0.75,0);
    \draw[dotted] (upright2) -- +(0.75,0);
    \draw[dotted] (left2) -- +(-0.75,0);
\end{tikzpicture}
\end{center}
\subsubsection{}
Now we give the formula for $\Gamma_Q$. To simplify notation, write $ \alpha = ps $ and $ \lambda = p (1 - \alpha) + 2 \alpha $. 

Define rational numbers
$$ x_i = \frac{p P_i + \lambda r_i}{p^2} $$
for each $ i = -m, \ldots, 0, \ldots, l $. 
One easily checks that $ x_0 = \lambda/p $. We furthermore put $ k_i = x_i - 1$ and $ y_j = \frac{\alpha - j + 1}{\alpha} k_0 $.

\begin{prop}
The discrepancy divisor of $Q$ is given by the formula
$$ \Gamma_Q = \sum_{i=-m}^l k_i E_i + \sum_{j=2}^{\alpha} y_j D_j. $$
\end{prop}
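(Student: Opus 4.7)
The plan is to determine $\Gamma_Q$ by exploiting the fact that, for the minimal resolution $\pi$ of a rational singularity, the discrepancy divisor is uniquely characterized by its intersection numbers with the exceptional components. Since $\pi^{*}\omega_{\mathscr{Y}} \cdot F = 0$ for any exceptional curve $F$, adjunction on the resolution gives
\[
\Gamma_Q \cdot F = 2 g_F - 2 - F^2,
\]
and this linear system has a unique solution among $\mathbb{Q}$-divisors supported on the exceptional locus by the negative definiteness of the intersection form. All components in the Obus--Wewers resolution are smooth and rational, so the right-hand side collapses to $-2 - F^2$, and the verification of the proposed formula reduces to checking these equations component by component.

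I would first handle the $(-2)$-chain $D_2, \ldots, D_\alpha$. For an interior $D_j$ the equation becomes $y_{j-1} + y_{j+1} - 2 y_j = 0$, at the endpoint $D_\alpha$ it reads $y_{\alpha-1} - 2 y_\alpha = 0$, and at $D_2$, which meets the node $D_1 = E_0$ with coefficient $k_0$, it reads $k_0 + y_3 - 2 y_2 = 0$. All three identities are immediate from the linear shape $y_j = \frac{\alpha - j + 1}{\alpha} k_0$.

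Next I would treat the two cyclic quotient chains. For an interior component $E_i$ the equation $k_{i-1} + k_{i+1} - a_i k_i = a_i - 2$ simplifies, after substituting $k_i = x_i - 1$ and $x_i = (p P_i + \lambda r_i)/p^2$, to the single identity $x_{i-1} + x_{i+1} = a_i x_i$. This is a direct consequence of the shared recursion $P_{i+1} = a_i P_i - P_{i-1}$ and $r_{i+1} = a_i r_i - r_{i-1}$, and the symmetric argument handles the chain $E_{-1}, \ldots, E_{-m}$.

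The hard part will be the verification at the two chain endpoints $E_l, E_{-m}$ and at the node $E_0$. At $E_l$ the required relation $k_{l-1} - a_l k_l = a_l - 2$ reduces, using $r_l = 1$ together with the recursion, to the single arithmetic identity $p P_{l+1} + \lambda r_{l+1} = p^2$, and an analogous identity is needed at $E_{-m}$. These identities are not formal consequences of the continued-fraction recursion alone: they reflect the precise compatibility between the numerical invariants $r_1, r_{-1}$ of the two cyclic chains and the length $\alpha = ps$ of the $(-2)$-chain, a compatibility built into the classification of type $(r,s)$ weak wild quotient singularities \cite[Thm.~7.8]{ObWe}. The node equation at $E_0$ further requires the self-intersection $E_0^2$, again read off from the same resolution graph; once it is known, the equation at $E_0$ combines the endpoint contributions of the two cyclic chains with $y_2$, producing one last identity in the data $(p, r_1, r_{-1}, \alpha)$ that must likewise be extracted from the Obus--Wewers description. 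The central technical work is therefore the explicit manipulation of the continued-fraction data supplied by \cite{ObWe} needed to establish these closing endpoint and node identities.
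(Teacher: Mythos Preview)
The paper explicitly declines to prove this proposition: just before the $p$-cyclic discussion it says ``We do not include proofs, as these results are not needed later in the paper.'' There is therefore no proof in the paper to compare against.

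Your approach is the standard and correct one for computing the discrepancy divisor of a rational surface singularity with known resolution graph: since each exceptional component $F$ is a smooth rational curve and $\pi^{*}\omega_{\mathscr{Y}}\cdot F=0$, adjunction forces $\Gamma_Q\cdot F=-2-F^2$, and negative definiteness of the intersection form makes the solution unique among $\mathbb{Q}$-divisors supported on the exceptional locus. Your verification for the $(-2)$-chain and for the interior components of the two Hirzebruch--Jung chains is correct as written, reducing as you say to the shared recursion $x_{i-1}+x_{i+1}=a_i x_i$. You also correctly isolate the genuinely non-formal content: the checks at the terminal vertices $E_l$, $E_{-m}$ and at the trivalent node $E_0$ are \emph{not} consequences of the continued-fraction recursion alone but encode the specific compatibility between $r_1$, $r_{-1}$, $s$ and the self-intersection $E_0^2$ that is forced by the Obus--Wewers classification. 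That is exactly where the work lies, and there is no shortcut around extracting those identities from \cite[Thm.~7.8]{ObWe}. One small caution: be careful with the value of $x_0$ when carrying this out, since the paper asserts $x_0=\lambda/p$, which does not literally match the displayed formula $x_i=(pP_i+\lambda r_i)/p^2$ with the Section~\ref{sec:tamegoodred} convention $P_0=1$, $r_0=p$; you will need to fix the correct normalisation at $i=0$ before the node check goes through.
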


\subsubsection{}

In fact the value $s$ above has a deeper meaning; it is the unique ramification jump $ s = s_{K'/K} $ of $ K'/K $. This means that $\mathrm{Sw}_{K'/K} = (s-1)(p-1) $, and we therefore obtain
$$ \mu_Q = 4s_{K'/K}\left(1 - \frac{1}{p}\right). $$

\subsubsection{Some concluding remarks}
One can in principle compute $\Gamma_Q$ explicitly for any weak wild quotient singularity,  using the detailed information provided in \cite{ObWe}. We have not attempted to do this beyond the $p$-cyclic case. 

It is interesting to compare the expression of $\mu_Q$ in the tame cyclic case and the weak wild case. In the latter, the self-intersection numbers disappear completely, and one is left with a compact expression. In view of the formula for $\Gamma_Q$ above, this might at first sight seem curious, but is due to the fact that the two chains indexed by ii) and iii) in the discussion above are, in some sense, \emph{dual}, which causes cancellation (the essential identity relating the two sets of numerical data can be found in \cite[(3.10)]{KiVi}).

\section{The base change conductor in the weakly wild case}

%%%%%%%%%%%%%%%%%%%%%%%%%%%%%%%%%%%%%%%%

We keep the notation and assumptions from the previous section. In particular, we assume that $C$ acquires good reduction after a purely wild extension $K'/K$, and we denote by $\mathscr{Z}$ the good model over $R'$. To simplify the notation in this section, we shall write $ \chi = \chi(\mathscr{Z}_k)$ and $ \overline{\chi} = \chi(\mathscr{Y}_{k, red})$. We moreover write $ \mathrm{Sw}(C) $ for $ \mathrm{Sw} (\mathrm{H}^1(C_{K^s}, \mathbb{Q}_{\ell})) $ and $\mathbf{sw}$ for $\mathbf{sw}_{K'/K}$. 

\subsection{The main result}
The following provides a closed formula for the base change conductor in terms of well-known invariants: 
\begin{theorem}\label{thm-mainwild}
Assume that the action of $G = \mathrm{Gal}(K'/K) $ on $\mathscr{Z}_k$ is weakly ramified. Then
$$ c_{\tame} = \frac{u}{2} \quad \hbox{ and }\quad c_{\wild}= \frac{1}{4} \mathrm{Sw}(C). $$
\end{theorem}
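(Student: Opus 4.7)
Since Theorem~\ref{thm:wildquotctame} already delivers $c_{\tame}(J)=u/2$, the task reduces to proving $c_{\wild}(J) = \mathrm{Sw}(C)/4$, equivalently $-12c(J) = -6u - 3\,\mathrm{Sw}(C)$. The plan is to apply Corollary~\ref{cor:basechconformula}(3), which is valid because the weak wild quotient singularities on $\mathscr{Y}$ are rational, and to reduce each of the three ingredients on the right-hand side to global invariants using results already proved in the paper.

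First, Corollary~\ref{cor:discdiv} gives $\Gamma = (1-e-\mathbf{sw})\mathscr{Z}_{k}$; combined with $\mathscr{Z}_{k}^{2}=0$ and $\mathscr{Z}_{k}\cdot\omega_{\mathscr{Z}/S'}=2g-2=-\chi$, where $\chi=\chi(\mathscr{Z}_{k})$, this produces $\tfrac{2}{e}\Gamma\cdot\pi^{*}\omega_{\mathscr{Y}/S} = 2\chi(1-1/e) + 2\chi\mathbf{sw}/e$. Then Corollary~\ref{Cor:ArtCond} gives $\mathrm{Art}_{\mathscr{Y}/S}=\chi-\overline{\chi}-\mathrm{Sw}(C)$ with $\overline{\chi}=\chi(\mathscr{Y}_{k,\mathrm{red}})$, and Proposition~\ref{Prop:Swansum} yields $\mu = 4M+4S$ where $M=\sum_{i}m_{i}(1-1/p^{r-i})$ and $S=\sum_{i,j}\mathbf{sw}_{Q_{i,j}}/p^{r-i}$. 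Applying Riemann--Hurwitz to the weakly ramified cover $\mathscr{Z}_{k}\to\overline{\mathscr{Z}_{k}}$, with local contribution $2(|G_{P}|-1)$ at each fixed point, yields $\overline{\chi} = \chi/e + 2M$, so $\chi-\overline{\chi}=\chi(1-1/e)-2M$.

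The key third ingredient is a global Swan conductor formula. Since $C_{K'}$ has good reduction, there is an isomorphism $H^{1}(C_{K^{s}},\mathbb{Q}_{\ell})\cong H^{1}(\mathscr{Z}_{k},\mathbb{Q}_{\ell})$ of $G$-representations, hence $\mathrm{Sw}(C) = \sum_{i\geq 1}\tfrac{1}{[G:G_{i}]}(2g-2g_{i})$, where $g_{i}$ is the genus of $\mathscr{Z}_{k}/G_{i}$. The weak ramification of the full $G$-action descends to each subgroup $G_{i}$ via the compatibility $(G_{P})_{j}=G_{P}\cap G_{j}$ of the lower-numbering filtration with subgroups, so a further application of Riemann--Hurwitz lets me rewrite $2g-2g_{i}$ in terms of $|G_{i}|$ and the stabilizer orders $|G_{P}\cap G_{i}|$. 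Summing over $i\geq 1$, grouping by fixed point, and using $\mathbf{sw}_{K'/K_{P}}=\sum_{j\geq 1}(|G_{P}\cap G_{j}|-1)$ together with the orbit size $|G|/|G_{P}|$, the identity collapses to $\mathrm{Sw}(C) = -\chi\mathbf{sw}/e + 2S$.

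Substituting these expressions into Corollary~\ref{cor:basechconformula}(3), the terms involving $\mathbf{sw}$ and $S$ cancel in pairs and the target identity reduces to $\overline{\chi}-\chi = 2u$, equivalently $u=g-\overline{g}$. This final step follows from potential good reduction: the toric rank vanishes, so $2u = \dim V - \dim V^{I}$ for $V=H^{1}(C_{K^{s}},\mathbb{Q}_{\ell})$; and the identification $V^{I}=V^{G}=H^{1}(\overline{\mathscr{Z}_{k}},\mathbb{Q}_{\ell})$ gives $\dim V^{I}=2\overline{g}$. The main obstacle is the Swan formula in the third paragraph: the bookkeeping of how the global ramification filtration $(G_{i})$ of $K'/K$ interacts with the local stabilizers $G_{P}$ must be handled carefully, but weak ramification ensures all stabilizer filtrations are trivial above level one, letting the resulting double sum telescope into the clean expression above.
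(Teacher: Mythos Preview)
Your proof is correct and follows the same initial setup as the paper (Corollary~\ref{cor:basechconformula}(3), the discrepancy from Corollary~\ref{cor:discdiv}, the Artin conductor from Corollary~\ref{Cor:ArtCond}, and the Milnor sum from Proposition~\ref{Prop:Swansum}). After the Riemann--Hurwitz substitution, both arguments reduce to the same key identity: in the paper's form it is Proposition~\ref{Prop:KeyIden}, namely $(\overline{\chi}-2M)\mathbf{sw}-2S=-\mathrm{Sw}(C)$, which by $\overline{\chi}=\chi/e+2M$ is exactly your formula $\mathrm{Sw}(C)=-\chi\mathbf{sw}/e+2S$.

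The difference lies in how this identity is established. The paper's primary proof of Proposition~\ref{Prop:KeyIden} uses localized Chern classes \`a la Bloch: it compares $c_2(\Omega^1_{\mathscr{Z}'/S})$ computed via two exact sequences, and reduces the difference to a sum of local contributions over the weak wild model of $\mathbb{P}^1$ by showing the relevant classes only depend on formal completions. Your argument instead proceeds directly: you use $H^1(C_{K^s},\mathbb{Q}_\ell)\cong H^1(\mathscr{Z}_k,\mathbb{Q}_\ell)$ as $G$-modules, the identification $V^{G_i}\cong H^1(\mathscr{Z}_k/G_i,\mathbb{Q}_\ell)$, Riemann--Hurwitz for each intermediate quotient $\mathscr{Z}_k\to\mathscr{Z}_k/G_i$ (which remains weakly ramified since $(G_P\cap G_i)_j=(G_P\cap G_i)\cap(G_P)_j$), and the Herbrand compatibility $(G_P)_j=G_P\cap G_j$ for lower numbering. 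The paper in fact acknowledges this alternative in its final subsection ``Direct argument'', sketching precisely your ingredients but calling the resulting computation ``lengthy'' and opting not to carry it out. Your version shows it can be organized quite cleanly. The Chern-class route is more conceptual and may generalize more readily beyond the weakly ramified hypothesis, while your route is elementary and self-contained.
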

\begin{proof}
Corollary \ref{cor:basechconformula} states that
$$ - 12 c(J) = \frac{2}{e} \Gamma \cdot \pi^* K_{\mathscr{Y}/S} + (\mathrm{Art}_{\mathscr{Y}/S} - \mu) $$
where $e$ as usual denotes the degree of $K'/K$. The discrepancy divisor $\Gamma$ was computed in Corollary \ref{cor:discdiv}, the Artin conductor $\mathrm{Art}_{\mathscr{Y}/S}$ was computed in Corollary \ref{Cor:ArtCond} and in Proposition \ref{Prop:Swansum} we computed the sum $\mu$ of the Milnor numbers of $\mathscr{Y}$. Substituting accordingly, we find
$$ - 12 c(J) = 2 \chi \left(1 - \frac{1}{e}\right) + 2 \frac{\chi}{e} \mathbf{sw} $$
$$   + \chi - \overline{\chi} - \mathrm{Sw}(C) $$
$$ - \left( 4 \sum_{i=0}^{r-1} m_i \left(1 - \frac{1}{p^{r-i}}\right) + 4 \sum_{i=0}^{r-1} \sum_{j=1}^{m_i} \frac{\mathbf{sw}_{Q_{i,j}}}{p^{r-i}} \right). $$
By the Riemann-Hurwitz formula (see \ref{subsubsec:RH}), this can be written 
$$ - 12 c(J) = 2 \chi + \left( - 2 \overline{\chi}  + 4 \sum_{i=0}^{r-1} m_i \left(1 - \frac{1}{p^{r-i}}\right)\right) + 2 \frac{\chi}{e} \mathbf{sw} $$
$$   + \chi - \overline{\chi} - \mathrm{Sw}(C) $$
$$ - \left( 4 \sum_{i=0}^{r-1} m_i \left(1 - \frac{1}{p^{r-i}}\right) + 4 \sum_{i=0}^{r-1} \sum_{j=1}^{m_i} \frac{\mathbf{sw}_{Q_{i,j}}}{p^{r-i}} \right). $$

The above expression simplifies to
$$ - 12 c(J) = 3 (\chi  - \overline{\chi}) $$
$$ + 2 \frac{\chi}{e} \mathbf{sw} - 4 \sum_{i=0}^{r-1} \sum_{j=1}^{m_i} \frac{\mathbf{sw}_{Q_{i,j}}}{p^{r-i}}  - \mathrm{Sw}(C). $$

Applying again the Riemann-Hurwitz formula, we can therefore write
$$ - 12 c(J) = -6 u $$
$$ + \left(2 \overline{\chi} - 4 \sum_{i=0}^{r-1} m_i \left(1 - \frac{1}{p^{r-i}}\right) \right)\mathbf{sw} - 4 \sum_{i=0}^{r-1} \sum_{j=1}^{m_i} \frac{\mathbf{sw}_{Q_{i,j}}}{p^{r-i}}  - \mathrm{Sw}(C). $$
Here we have, suggestively, split out the "wild part" on the last line. Therefore, to complete the proof we can apply Proposition \ref{Prop:KeyIden} below.
\end{proof}

\begin{prop}\label{Prop:KeyIden}
The following identity holds:
$$\left( \overline{\chi} - 2 \sum_{i=0}^{r-1} m_i \left(1 - \frac{1}{p^{r-i}}\right) \right)\mathbf{sw} - 2 \sum_{i=0}^{r-1} \sum_{j=1}^{m_i} \frac{\mathbf{sw}_{Q_{i,j}}}{p^{r-i}} = - \mathrm{Sw}(C). $$
\end{prop}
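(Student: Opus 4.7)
The strategy is to compute $\mathrm{Sw}(C)$ intrinsically as the Swan conductor of the $G$-representation $V := H^1(\mathscr{Z}_k, \mathbb{Q}_\ell)$. By smooth-proper base change applied to the smooth $R'$-model $\mathscr{Z}$, there is a $G$-equivariant isomorphism $H^1(C_{K^s},\mathbb{Q}_\ell) \cong V$, and the inertia action on the left-hand side factors through $G = \mathrm{Gal}(K'/K)$ since $C$ acquires good reduction over $K'$. Thus $\mathrm{Sw}(C) = \mathrm{Sw}(V)$.

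Unfolding the definition of the Swan conductor and using the standard identification $V^{G_k} = H^1(\mathscr{Z}_k/G_k, \mathbb{Q}_\ell)$ (valid since $|G_k|$ is invertible in $\mathbb{Q}_\ell$), one has $\dim V^{G_k} = 2 g_k$ with $g_k := g(\mathscr{Z}_k/G_k)$. I would then apply the weakly ramified Riemann--Hurwitz formula \eqref{eq:RH2} to each intermediate cover $\mathscr{Z}_k \to \mathscr{Z}_k/G_k$. This is legitimate because weak ramification is inherited by any subgroup $H \subseteq G$ through the standard compatibility $H_j = H \cap G_j$ of lower numbering. A direct rearrangement gives
\[
2g - 2g_k \;=\; (2g - 2)\,\frac{|G_k|-1}{|G_k|} \;+\; \frac{2}{|G_k|}\sum_{P \in \mathscr{Z}_k}\bigl(|G_P \cap G_k| - 1\bigr).
\]
Substituting into the Swan formula and swapping the order of summation produces
\[
\mathrm{Sw}(V) \;=\; \frac{1}{|G|}\Bigl[(2g - 2)\sum_{k \geq 1}(|G_k| - 1) \;+\; 2\sum_{P}\sum_{k \geq 1}(|G_P \cap G_k| - 1)\Bigr].
\]

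Two standard identifications now close the computation. The first inner sum equals $\mathrm{Sw}(\mathbb{Q}_\ell[G]) = \mathbf{sw}_{K'/K} = \mathbf{sw}$, as one checks directly from $\dim \mathbb{Q}_\ell[G]^{G_k} = |G|/|G_k|$. The second inner sum equals $\mathbf{sw}_{K'/K_P} = \mathbf{sw}_Q$ for any $P$ above $Q$, by the same computation applied to the Galois extension $K'/K_P$ with group $G_P$ and ramification filtration $(G_P)_k = G_P \cap G_k$. Since $G$ is abelian by Proposition \ref{prop:elementary abelian}, the stabilizer is constant on each orbit; grouping points accordingly, the orbit over $Q_{i,j}$ has cardinality $p^i$ with uniform contribution $\mathbf{sw}_{Q_{i,j}}$. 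With $\chi = 2 - 2g$ and $|G| = e = p^r$ this yields
\[
\mathrm{Sw}(C) \;=\; -\frac{\chi}{e}\,\mathbf{sw} \;+\; 2\sum_{i,j}\frac{\mathbf{sw}_{Q_{i,j}}}{p^{r-i}}.
\]

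To match the stated form I would apply the weakly ramified Riemann--Hurwitz formula one last time to the full cover $\mathscr{Z}_k \to \overline{\mathscr{Z}_k}$, using Lemma \ref{lem:quotfiberprops}(3) to identify $\chi(\overline{\mathscr{Z}_k}) = \overline{\chi}$. Since the ramification contribution decomposes by orbits as $\sum_{i,j} p^i(p^{r-i}-1)$, one obtains $\chi/e = \overline{\chi} - 2\sum_{i,j}(1 - 1/p^{r-i})$; substituting and rearranging yields the claimed identity. The main delicate point is the systematic use of the compatibility $H_j = H \cap G_j$ of ramification filtrations under restriction, guaranteeing that weak ramification and the simplified Riemann--Hurwitz formula apply to every subgroup $G_k$ simultaneously; once this is secured, the rest is a careful bookkeeping exercise.
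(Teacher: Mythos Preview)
Your argument is correct, and it is essentially the ``direct argument'' that the paper alludes to in Section~11.5 but deliberately does not carry out in detail. The paper's main proof instead uses Bloch's theory of localized Chern classes: it compares $c_2^{\mathscr{Z}'_k}(\Omega^1_{\mathscr{Z}'/S})$ computed via the two short exact sequences \eqref{A} and \eqref{B}, and then shows (Proposition~\ref{Prop:Lloc}) that the difference localizes to a sum over the points with non-trivial stabilizer, each term depending only on the formal completion and hence computable on the model $\mathscr{Y}_Q$ of $\mathbb{P}^1$. The identity then falls out by matching global and local Artin conductors.

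The two approaches differ in what they buy. Your route is genuinely more elementary: once one has the equivariant smooth--proper base change identification $H^1(C_{K^s},\mathbb{Q}_\ell)\cong H^1(\mathscr{Z}_k,\mathbb{Q}_\ell)$ and the invariants identification $H^1(\mathscr{Z}_k,\mathbb{Q}_\ell)^{G_k}\cong H^1(\mathscr{Z}_k/G_k,\mathbb{Q}_\ell)$, everything reduces to the compatibility $(G_P)_k = G_P\cap G_k$ of lower ramification filtrations and repeated use of the weakly ramified Riemann--Hurwitz formula \eqref{eq:RH2}. Your identification $\sum_{k\geq 1}(|G_k|-1)=\mathbf{sw}$ and its local analogue $\sum_{k\geq 1}(|G_P\cap G_k|-1)=\mathbf{sw}_Q$ are clean and avoid the need for the weak-wild model $\mathscr{Y}_Q$ entirely at this stage. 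The paper's approach, by contrast, is more geometric and designed to generalize: the authors remark that they believe the localized Chern class argument ``could be useful in other settings,'' presumably where the direct cohomological identification is unavailable or where one wants to go beyond the weakly ramified hypothesis.
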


The proof of Proposition \ref{Prop:KeyIden} we shall present uses the theory of localized Chern classes as developed in \cite{Bloch} and \cite{KatoSaito} in the context of Bloch's conductor conjecture. We believe this approach is most conceptual, and moreover that some of the arguments could be useful in other settings. However, it is also possible to prove the identity in a more direct way, through (somewhat lengthy) explicit computations. We provide a brief explanation at the end.

\subsection{Preliminaries for the proof}

Let $ P $ be a closed point in the special fiber of $ \mathscr{Z} $. Then we have a factorization $ S' \to S_P \to S $ corresponding to the fixed field $ (K')^{G_P}$ of the stabilizer group. If $ W $ is an $S_P$-scheme we will, when the base scheme is clear from the context, write $ B(W) $ for $W \times_{S_P} S' $ and $ NB(W) $ for the normalization of $B(W)$.

\subsubsection{The geometric construction} 

Let $ \varphi \colon \mathscr{Y}' \to \mathscr{Y} $ be the minimal desingularization. Let moreover
$$ \mathscr{Z}' \to NB(\mathscr{Y}') $$
be the minimal resolution with strict normal crossings. Then $ \mathscr{Z}' $ is regular and proper over $S'$, and fits into a commutative diagram

\begin{equation}\label{ABCD}
\xymatrix{
    \mathscr{Z}'  \ar[r]^{\pi'} \ar[d]_{\phi} & \mathscr{Y}'  \ar[d]^{\varphi} \\
   \mathscr{Z}   \ar[r]^{\pi} & \mathscr{Y}.
}    
\end{equation}

The proper birational morphism
$$ \phi \colon \mathscr{Z}' \to \mathscr{Z} $$
can be factored into a composition of blow-ups at closed points in the special fiber and is the identity away from the finitely many (closed) points $ P \in \mathscr{Z}_k $ with non-trivial stabilizer $G_P \subset G$. In particular, $\mathscr{Z}'$ is an sncd model.

\subsubsection{} \label{subsec:shortexactsequences}
Let $ g \colon \mathscr{Z}' \to S' $ denote the structural map. By (the proof of) \cite[Cor.~1.2]{Bloch} and \cite[Lemma~7.2]{Bloch}, there are short exact sequences 
\begin{equation}\label{A} 0 \to g^* \Omega^1_{S'/S} \to \Omega^1_{\mathscr{Z}'/S} \to \Omega^1_{\mathscr{Z}'/S'} \to 0.
\end{equation}
 
\begin{equation}\label{B}
0 \to (\pi')^* \Omega^1_{\mathscr{Y}'/S} \to \Omega^1_{\mathscr{Z}'/S} \to \Omega^1_{\mathscr{Z}'/\mathscr{Y}'} \to 0. 
\end{equation}
As is part of these references, one has that, in the derived category, $L{\pi'}^{\ast} \Omega^1_{\mathscr{Y}'/S} = {\pi'}^{\ast} \Omega^1_{\mathscr{Y}'/S}.$

\subsubsection{The local situation}
Assume that $ P \in \mathscr{Z}_k $ has non-trivial stabilizer $G_P$ and denote by $Q$ its image in $ \mathscr{Z}/G_P $. Recall from \ref{subsec:weakwildquot} that there is an associated \emph{local} situation
$$ \pi_P \colon \mathscr{Z}_P \to \mathscr{Z}_P/G_P = \mathscr{Y}_Q, $$
with $\mathscr{Z}_P$ a smooth $S'$-model of $ \mathbb{P}^1_{K'} $ and $\mathscr{Y}_Q$ a scheme over $ S_P $.

We again have minimal resolutions with strict normal crossings $ \mathscr{Y}_Q' \to \mathscr{Y}_Q$ and $\mathscr{Z}_P' \to NB(\mathscr{Y}_Q')$, respectively. The two exact sequences above are also valid in this situation.

\subsubsection{Formal completion}

%\textbf{Add that the two situations coincide after completion at $Q$} 
Replacing $\mathscr{Y}$ by $\mathscr{Z}/G_P$ if necessary, we can and will assume that $G = G_P$ to simplify notation. We write $ \widehat{\mathscr{Y}}_Q = \mathrm{Spec}(\widehat{\mathcal{O}}_{\mathscr{Y}, Q})$ and $ \widehat{\mathscr{Z}}_P = \mathrm{Spec}(\widehat{\mathcal{O}}_{\mathscr{Z}, P})$. 

Recall that the minimal desingularization $ \widehat{\mathscr{Y}}_Q' \to \widehat{\mathscr{Y}}_Q $ is the base change of $\mathscr{Y}' \to \mathscr{Y}$. We explain next that a similar statement holds for the minimal desingularization 
$$ \widehat{\mathscr{Z}}_P' \to NB(\widehat{\mathscr{Y}}_Q').  $$

\begin{lemma}\label{lem:complocglob}
The following identites hold:   
\begin{enumerate}

\item $\widehat{\mathscr{Z}}_P = \mathscr{Z} \times_{\mathscr{Y}} \widehat{\mathscr{Y}}_Q $, and

\item $\widehat{\mathscr{Z}}'_P = \mathscr{Z}' \times_{\mathscr{Y}} \widehat{\mathscr{Y}}_Q $.

\end{enumerate}
The analogous statements hold also when replacing $\mathscr{Y}$ and $\mathscr{Z}$ with $\mathscr{Y}_Q$ and $\mathscr{Z}_P$, respectively, in the local situation.
\end{lemma}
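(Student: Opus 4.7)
The plan is to treat the two identities separately, and to observe at the end that the local version follows verbatim by replacing $\mathscr{Y}, \mathscr{Z}$ by $\mathscr{Y}_Q, \mathscr{Z}_P$. Part (1) I would reduce to a routine exercise about finite morphisms, relying crucially on the simplifying assumption $G = G_P$. Part (2) is the more substantial claim; here I would unwind the factorization
$$ \mathscr{Z}' \to NB(\mathscr{Y}') \to \mathscr{Y}' \times_S S' \to \mathscr{Y}' \to \mathscr{Y} $$
coming from diagram (\ref{ABCD}) and verify, step by step, that each piece of the construction is compatible with base change to $\widehat{\mathscr{Y}}_Q$.

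For part (1), the point is that $\pi \colon \mathscr{Z} \to \mathscr{Y}$ is finite and $\pi^{-1}(Q) = \{P\}$ set-theoretically (since the $G$-orbit of $P$ is $\{P\}$). Hence $(\pi_*\mathcal{O}_{\mathscr{Z}})_Q = \mathcal{O}_{\mathscr{Z},P}$, and $\mathscr{Z} \times_{\mathscr{Y}} \widehat{\mathscr{Y}}_Q$ is affine with coordinate ring $A := \mathcal{O}_{\mathscr{Z},P} \otimes_{\mathcal{O}_{\mathscr{Y},Q}} \widehat{\mathcal{O}}_{\mathscr{Y},Q}$. Being finite over the complete local ring $\widehat{\mathcal{O}}_{\mathscr{Y},Q}$, the ring $A$ is $\mathfrak{m}_Q$-adically complete and semilocal; since $\mathcal{O}_{\mathscr{Z},P}$ has a unique maximal ideal above $\mathfrak{m}_Q$, the ring $A$ is actually local, and therefore canonically identified with $\widehat{\mathcal{O}}_{\mathscr{Z},P}$.

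For part (2), I would compute the claimed base change one step at a time along the factorization above. The identification $\mathscr{Y}' \times_{\mathscr{Y}} \widehat{\mathscr{Y}}_Q = \widehat{\mathscr{Y}}'_Q$ has already been recalled and rests on \cite[\href{https://stacks.math.columbia.edu/tag/0BG6}{0BG6}]{stacks-project}. Pulling back the flat morphism $\mathscr{Y}' \times_S S' \to \mathscr{Y}'$ along $\widehat{\mathscr{Y}}'_Q \to \mathscr{Y}'$ gives $\widehat{\mathscr{Y}}'_Q \times_S S' = B(\widehat{\mathscr{Y}}'_Q)$; and since normalization commutes with flat base change and $\widehat{\mathscr{Y}}'_Q \to \mathscr{Y}'$ is flat, applying normalization yields $NB(\mathscr{Y}') \times_{\mathscr{Y}'} \widehat{\mathscr{Y}}'_Q = NB(\widehat{\mathscr{Y}}'_Q)$. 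Finally, a second application of \cite[\href{https://stacks.math.columbia.edu/tag/0BG6}{0BG6}]{stacks-project} to the minimal resolution $\mathscr{Z}' \to NB(\mathscr{Y}')$ identifies $\mathscr{Z}' \times_{NB(\mathscr{Y}')} NB(\widehat{\mathscr{Y}}'_Q)$ with $\widehat{\mathscr{Z}}'_P$. Composing these four identifications produces the desired equality $\mathscr{Z}' \times_{\mathscr{Y}} \widehat{\mathscr{Y}}_Q = \widehat{\mathscr{Z}}'_P$.

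The main technical obstacle will be justifying the compatibility of the minimal desingularization with formal completion, which I need to invoke twice; for the second application, one must first check that $NB(\widehat{\mathscr{Y}}'_Q)$ is a normal excellent two-dimensional scheme whose singularities are precisely those lying over $P$, so that the cited Stacks reference applies. Once this is in hand, the proof is a matter of carefully tracking fiber products, and the local version follows by the same argument since every identification is made locally around $P$ and $Q$.
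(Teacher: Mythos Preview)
Your overall strategy matches the paper's, but two steps need tightening.

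First, normalization does \emph{not} commute with arbitrary flat base change (purely inseparable field extensions give counterexamples). What you actually need is that $\widehat{\mathscr{Y}}'_Q \to \mathscr{Y}'$ is a \emph{regular} morphism, which holds because $\mathscr{Y}'$ is excellent (finite type over a complete DVR), and normalization does commute with regular base change. The paper handles this step by a slightly different route: it checks directly that $NB(\mathscr{Y}') \times_{\mathscr{Y}} \widehat{\mathscr{Y}}_Q$ is regular on the complement of the fiber over $Q$ (using part (1) to identify it there with a base change of $\mathscr{Z}\setminus P$) and then invokes a normality criterion from the Stacks project.

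Second, your ``second application of \cite[\href{https://stacks.math.columbia.edu/tag/0BG6}{0BG6}]{stacks-project}'' is not literally an application of that lemma, since $NB(\widehat{\mathscr{Y}}'_Q)$ is not the spectrum of the completion of a local ring on $NB(\mathscr{Y}')$. The paper makes this step precise by first showing that $NB(\widehat{\mathscr{Y}}'_Q) \to NB(\mathscr{Y}')$ induces isomorphisms on completed local rings at every point over $Q$ (deduced from the analogous statement for $\widehat{\mathscr{Y}}'_Q \to \mathscr{Y}'$ together with finiteness of the normalization map), and this is exactly what one needs to conclude that the minimal resolution base changes correctly. Your acknowledgment of this as the ``main technical obstacle'' is apt; the paper's argument is the concrete way to discharge it.
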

\begin{proof}
For both statements, by standard arguments, we can and will replace $\mathscr{Y}$ by $\mathrm{Spec}(\mathcal{O}_{\mathscr{Y}, Q})$.

In order to prove (1), recall that $ \mathscr{Z} = NB(\mathscr{Y})$. Since $ \mathscr{Z} \to \mathscr{Y} $ is finite, and $P$ is the unique point in the preimage of $Q$, the statement follows from the identity
$$ N(\widehat{\mathcal{O}}_{\mathscr{Y}, Q} \otimes_R R') = \widehat{\mathcal{O}}_{\mathscr{Z}, P} $$
(\emph{cf.} \emph{e.g.} \cite[2.2]{Halle-stable}).

We next claim that $NB(\widehat{\mathscr{Y}}_Q') = NB(\mathscr{Y}') \times_{\mathscr{Y}} \mathscr{Y}_Q $. Indeed, $NB(\mathscr{Y}') \to \mathscr{Y}$ is of finite type, and coincides with $\mathscr{Z} \setminus P $ over $\mathscr{Y} \setminus Q$ by what we have already proved. This implies that $NB(\mathscr{Y}') \times_{\mathscr{Y}} \mathscr{Y}_Q$ is regular (and hence normal) on the complement of the fiber over $Q$. It is therefore normal by \cite[54.11.6]{stacks-project}, and necessarily coincides with $NB(\widehat{\mathscr{Y}}_Q')$.

By \cite[54.11]{stacks-project}, the projection $ \widehat{\mathscr{Y}}_Q' \to \mathscr{Y}' $ induces an isomorphism on the level of completed local rings at any point $x$ mapping to $Q$. This fact, together with the finiteness of the respective normalization maps, implies that $ NB(\widehat{\mathscr{Y}}_Q') \to NB(\mathscr{Y}') $ also induces an isomorhism at the level of completed local rings at any point mapping to $Q$ (again arguing as in \cite[2.2]{Halle-stable}). Therefore the pullback of $\mathscr{Z}'$ yields $\widehat{\mathscr{Z}}'_P$ proving (2).

\end{proof}

From Lemma \ref{lem:complocglob} we immediately derive that $\widehat{\mathscr{Z}}'_P = \mathscr{Z}' \times_{\mathscr{Z}} \widehat{\mathscr{Z}}_P$ and that $\widehat{\mathscr{Z}}'_P = \mathscr{Z}' \times_{\mathscr{Y}'} \widehat{\mathscr{Y}}'_Q$. In particular, the pullback of $ \mathscr{Z}' \to \mathscr{Y}' $ to $\widehat{\mathscr{Y}}'_Q$ only depends on $\widehat{\mathscr{Y}}_Q$. The analogous statement is true for $ \mathscr{Z}'_P \to \mathscr{Y}'_Q $.

\subsubsection{Comparing Euler characteristics}\label{subsubsec:Eulercomp}

By the computation in the proof of \cite[Proposition 4.2]{Eri16}, one finds, if $\varphi: \cX' \to \cX$ is a desingularization of a normal model $\cX $,
\begin{equation}\label{eq:Eulercharblowup}
\chi(\cX_k') = \chi(\cX_k) + \sum (\chi(E_i) - 1).
\end{equation}
Here the sum is over the exceptional divisors $E_i$. We find that 
$$ \chi(\mathscr{Y}_k') = \overline{\chi} + \sum_Q a_Q, $$
where $a_Q$ is an integer which only depends on $ \widehat{\mathcal{O}}_{\mathscr{Y}, Q} $. A similar argument for $ \mathscr{Y}'_Q \to \mathscr{Y}_Q $ therefore yields
$$ \chi(\mathscr{Y}_{Q,k}') = 2 + a_Q. $$

Let $b_P$ denote the number of exceptional curves in $ \phi \colon \mathscr{Z}' \to \mathscr{Z} $ mapping to $P$. Then if we write 
$$ \chi(\mathscr{Z}_k') = \chi + \sum_P b_P$$
it follows from Lemma \ref{lem:complocglob} that
$$ \chi(\mathscr{Z}_{P,k}') = 2 + b_P $$
in the local case.

\subsection{Computations with localized Chern classes}

We shall write the localized Chern classes without the capping with the fundamental class $c_i^Z(\mathcal{F}) = c_i^Z(\mathcal{F}) \cap [\cX]$ for the degree of the  $i$-th localized Chern class of a coherent sheaf of finite homological dimension on 
 a scheme $\cX \to S$, locally free away from the $Z \subseteq \cX_k$, as in \ref{subsec:localizedChern}. Since will always work on regular schemes, all coherent sheaves have finite homological dimensions. For top degree classes we also confuse them implicitly with their degrees. 

Below we shall frequently use \cite[Theorem]{Bloch} which states that if $\cX$ is regular, flat, proper and of relative dimension $d$ over $S$, when $d \leq 1,$
\begin{equation}\label{eq:Blochformula}  c^{\cX_k}_{d+1}(\Omega^1_{\cX/S}) = (-1)^d \mathrm{Art}_{\cX/S}.
\end{equation}

\subsubsection{}
We record the following identities, where the notation is as in the previous sections. 
\begin{lemma}\label{lemma:cherncomp1}
The following identities hold:
\begin{enumerate}
\item $ c_2^{\mathscr{Z}'_k} (\Omega^1_{\mathscr{Z}'/S}) = - (e - 1 + \mathbf{sw}) \cdot \chi + \sum_P b_P $, and

\item $ c_2^{\mathscr{Z}'_{P,k}}(\Omega^1_{\mathscr{Z}'_P/S_P}) = - (e_P - 1 + \mathbf{sw}_Q) \cdot 2 + b_P $.
\end{enumerate}
\end{lemma}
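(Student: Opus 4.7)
The strategy is to apply the Whitney additivity formula for $c_2$ to the short exact sequence \eqref{A} and evaluate each piece separately. In degree two this reads
\[
c_2^{\mathscr{Z}'_k}(\Omega^1_{\mathscr{Z}'/S}) = c_2^{\mathscr{Z}'_k}(g^{*}\Omega^1_{S'/S}) + c_1(g^{*}\Omega^1_{S'/S}) \cdot c_1(\Omega^1_{\mathscr{Z}'/S'}) + c_2^{\mathscr{Z}'_k}(\Omega^1_{\mathscr{Z}'/S'}).
\]
This identity follows from the additivity of the Chern character on perfect complexes (all three sheaves are perfect since $\mathscr{Z}'$ is regular) combined with the Newton identity $c_2 = \tfrac{1}{2} c_1^2 - \mathrm{ch}_2$; the fact that $g^{*}\Omega^1_{S'/S}$ is supported on $\mathscr{Z}'_k$ provides the required localization.

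For the third term I would apply Bloch's formula \eqref{eq:Blochformula} to $\mathscr{Z}' \to S'$. Since $C$ has good reduction over $R'$, the Swan conductor of $H^1(C_{(K')^s},\mathbb{Q}_\ell)$ over $K'$ vanishes; combining $\chi = 2-2g$ with the Euler characteristic comparison $\chi(\mathscr{Z}'_k) = \chi + \sum_P b_P$ from \ref{subsubsec:Eulercomp} one finds $\Art_{\mathscr{Z}'/S'} = -\sum_P b_P$, whence $c_2^{\mathscr{Z}'_k}(\Omega^1_{\mathscr{Z}'/S'}) = \sum_P b_P$. For the remaining two terms, Proposition \ref{prop:canonicalofextension} identifies $\Omega^1_{S'/S}$ with $R'/(\pi')^{e-1+\mathbf{sw}}$ and yields the two-term resolution
\[
0 \to \mathcal{O}_{\mathscr{Z}'}(-D) \to \mathcal{O}_{\mathscr{Z}'} \to g^{*}\Omega^1_{S'/S} \to 0, \qquad D = (e-1+\mathbf{sw})\,\mathscr{Z}'_k.
\]
Multiplicativity of the total Chern class then gives $c_1(g^{*}\Omega^1_{S'/S}) = [D]$ and $c_2^{\mathscr{Z}'_k}(g^{*}\Omega^1_{S'/S}) = D^2 = 0$, the vanishing coming from $\mathscr{Z}'_k = g^{*} s'$ together with the projection formula.

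The cross term involves $c_1(\Omega^1_{\mathscr{Z}'/S'}) = c_1(\omega_{\mathscr{Z}'/S'}) = K_{\mathscr{Z}'/S'}$, so it equals $(e-1+\mathbf{sw})\,\mathscr{Z}'_k \cdot K_{\mathscr{Z}'/S'}$. The projection formula along $\phi: \mathscr{Z}' \to \mathscr{Z}$ (using that the discrepancy $K_{\mathscr{Z}'/S'} - \phi^{*}K_{\mathscr{Z}/S'}$ is exceptional and hence has trivial pushforward) reduces this to $(e-1+\mathbf{sw})\,\mathscr{Z}_k \cdot K_{\mathscr{Z}/S'}$, and adjunction on the smooth surface $\mathscr{Z}$ with $\mathscr{Z}_k^2 = 0$ yields $\mathscr{Z}_k \cdot K_{\mathscr{Z}/S'} = 2g-2 = -\chi$. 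Summing the three contributions proves the first identity. The second identity is obtained by the identical argument applied to $\mathscr{Z}'_P \to S_P$, with $(C,K'/K,e,\chi)$ replaced by $(\mathbb{P}^1_{K_P}, K'/K_P, e_P, 2)$; the good model $\mathscr{Z}_P$ has $\chi(\mathscr{Z}_{P,k}) = 2$, the Swan input vanishes trivially since $H^1(\mathbb{P}^1_{K^s},\mathbb{Q}_\ell) = 0$, and adjunction gives $\mathbb{P}^1 \cdot K_{\mathscr{Z}_P/S'} = -2$.

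The main obstacle is to justify the localized Whitney identity in the first display, and in particular the vanishing $c_2^{\mathscr{Z}'_k}(g^{*}\Omega^1_{S'/S}) = 0$: one must check that the \emph{localized} second Chern class of this torsion sheaf, which a priori might carry more refined information than its Chow-theoretic image on $\mathscr{Z}'$, reduces here to $D^2$ as computed globally. This follows from Proposition \ref{prop:chernclassproperties} and the existence of the simple two-term global resolution above, which makes the localization compatible with pushforward to $\mathscr{Z}'$ where the self-intersection $(g^{*}s')^2 = g^{*}(s' \cdot s')$ vanishes.
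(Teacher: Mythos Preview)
Your proof is correct and follows essentially the same route as the paper: apply the Whitney formula for localized Chern classes to the exact sequence \eqref{A}, identify $c_2^{\mathscr{Z}'_k}(\Omega^1_{\mathscr{Z}'/S'})$ via Bloch's formula \eqref{eq:Blochformula}, and evaluate the remaining terms coming from $g^*\Omega^1_{S'/S}$. The paper is terser: it records the two inputs $c_1^{s'}(\Omega^1_{S'/S}) = e-1+\mathbf{sw}$ (the $d=0$ case of \eqref{eq:Blochformula}, i.e.\ the conductor--discriminant formula) and $c_2^{\mathscr{Z}'_k}(\Omega^1_{\mathscr{Z}'/S'}) = \sum_P b_P$, then invokes \cite[Prop.~1.1(iii)]{Bloch} directly, whereas you spell out the Whitney identity, the vanishing $c_2^{\mathscr{Z}'_k}(g^*\Omega^1_{S'/S}) = D^2 = 0$, and the cross-term computation $\mathscr{Z}'_k \cdot K_{\mathscr{Z}'/S'} = -\chi$ via adjunction.

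Two minor remarks. First, your derivation of the localized Whitney identity via ``additivity of $\mathrm{ch}$ plus Newton'' is heuristically right but is not how the localized version is set up; simply citing \cite[Prop.~1.1(iii)]{Bloch} (as the paper does, and as used again in \eqref{eq:whitneyc2}) is cleaner and avoids the worry you flag at the end. Second, the detour through $\phi$ in your cross-term computation is unnecessary: since $\mathscr{Z}'_k$ is a fibre, adjunction gives $\omega_{\mathscr{Z}'/S'}\vert_{\mathscr{Z}'_k} \cong \omega_{\mathscr{Z}'_k}$ directly, and $\deg\omega_{\mathscr{Z}'_k} = 2p_a(\mathscr{Z}'_k)-2 = 2g-2 = -\chi$ by flatness, with no need to push down to $\mathscr{Z}$.
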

\begin{proof}
We claim that Bloch's theorem recalled above yields $ c_1^{s}(\Omega^1_{S'/S}) = (e - 1 + \mathbf{sw}) $ and $ c_2^{\mathscr{Z}'_k} (\Omega^1_{\mathscr{Z}'/S'}) = \sum_P b_P $, respectively. Indeed, in the former case it is the conductor-discriminant formula, and in the latter case, we use that $\mathscr{Z}/S'$ is smooth and that 
$$ \chi(\mathscr{Z}'_k) = \chi(\mathscr{Z}_k) + \sum_P b_P. $$
Identity (1) therefore follows immediately from applying \cite[Prop.~1.1 (iii)]{Bloch} to Sequence \eqref{A}. The proof of (2) is similar.

\end{proof}

\begin{lemma}\label{lemma:cherncomp2}
The following identities hold:
\begin{enumerate}
\item $ c_2^{\mathscr{Z}'_k}((\pi')^*\Omega^1_{\mathscr{Y}'/S}) = e \cdot (\sum_Q a_Q + \overline{\chi} - \chi + \mathrm{Sw}(C))$, and

\item $ c_2^{\mathscr{Z}'_{P,k}}( (\pi'_P)^* \Omega^1_{ \mathscr{Y}'_Q/S_P}) = e_P \cdot a_Q $.
\end{enumerate}
\end{lemma}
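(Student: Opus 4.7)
The plan is to reduce both statements to Bloch's formula \eqref{eq:Blochformula} applied to the regular sncd-model $\mathscr{Y}'$ (respectively $\mathscr{Y}'_Q$) via a projection formula for localized Chern classes along the generically finite morphism $\pi'$ (respectively $\pi'_P$). Concretely, the starting point is the identity
\begin{equation*}
c_2^{\mathscr{Z}'_k}\bigl((\pi')^*\Omega^1_{\mathscr{Y}'/S}\bigr) \;=\; e \cdot c_2^{\mathscr{Y}'_k}\bigl(\Omega^1_{\mathscr{Y}'/S}\bigr),
\end{equation*}
which follows from the standard projection formula applied to $\pi' \colon \mathscr{Z}' \to \mathscr{Y}'$: this is a proper morphism that is generically finite of degree $e$, so $(\pi')_*[\mathscr{Z}'] = e \cdot [\mathscr{Y}']$; combined with compatibility of localized Chern classes with pullback of the sheaf and with proper pushforward of cycles (as in \cite[Prop.~1.1]{Bloch}), taking degrees yields the displayed equality.

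Once this reduction is in place, we apply Bloch's formula \eqref{eq:Blochformula} to the regular, proper, flat $S$-scheme $\mathscr{Y}'$ of relative dimension $1$, giving
\begin{equation*}
c_2^{\mathscr{Y}'_k}\bigl(\Omega^1_{\mathscr{Y}'/S}\bigr) \;=\; -\mathrm{Art}_{\mathscr{Y}'/S}.
\end{equation*}
Unpacking definition \eqref{def:Artinconductor}, we have $\mathrm{Art}_{\mathscr{Y}'/S} = \chi(C_{K^s}) - \chi(\mathscr{Y}'_k) - \mathrm{Sw}(C)$. We use that $\chi(C_{K^s}) = \chi(\mathscr{Z}_k) = \chi$ because $C$ is smooth and $\mathscr{Z}$ is the good model, together with the identity $\chi(\mathscr{Y}'_k) = \overline{\chi} + \sum_Q a_Q$ from \ref{subsubsec:Eulercomp}. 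Substituting these into the projection formula yields exactly identity (1).

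For (2), the argument is entirely parallel in the local setting: $\pi'_P \colon \mathscr{Z}'_P \to \mathscr{Y}'_Q$ is generically of degree $e_P$, so the same projection formula gives
\begin{equation*}
c_2^{\mathscr{Z}'_{P,k}}\bigl((\pi'_P)^*\Omega^1_{\mathscr{Y}'_Q/S_P}\bigr) \;=\; e_P \cdot c_2^{\mathscr{Y}'_{Q,k}}\bigl(\Omega^1_{\mathscr{Y}'_Q/S_P}\bigr) \;=\; -e_P \cdot \mathrm{Art}_{\mathscr{Y}'_Q/S_P}.
\end{equation*}
The generic fiber of $\mathscr{Y}'_Q \to S_P$ is $\mathbb{P}^1_{K_P}$, so its Euler characteristic is $2$ and the corresponding Swan contribution vanishes (the $\mathrm{Gal}(K^s/K_P)$-action on the $\ell$-adic cohomology of $\mathbb{P}^1$ is trivial). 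Together with $\chi(\mathscr{Y}'_{Q,k}) = 2 + a_Q$, this gives $\mathrm{Art}_{\mathscr{Y}'_Q/S_P} = -a_Q$ and hence identity (2). The main obstacle in writing this out rigorously is justifying the projection formula for localized Chern classes in the precise form used above; this should follow by combining the compatibility of localized Chern classes with flat pullback (as established in Proposition~\ref{prop:chernclassproperties}) with the proper pushforward formula of Bloch, but care is needed because the pushforward of $[\mathscr{Z}']$ is only generically a multiple of $[\mathscr{Y}']$ while the Chern class lives on the special fiber.
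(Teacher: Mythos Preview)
Your proposal is correct and follows essentially the same approach as the paper: reduce via the projection formula for the generically finite morphism $\pi'$ to $e \cdot c_2^{\mathscr{Y}'_k}(\Omega^1_{\mathscr{Y}'/S})$, then apply Bloch's formula and the Euler characteristic computations of \ref{subsubsec:Eulercomp}. The one point the paper adds, which resolves your stated concern about the projection formula, is the observation from \textsection\ref{subsec:shortexactsequences} that $L(\pi')^*\Omega^1_{\mathscr{Y}'/S} = (\pi')^*\Omega^1_{\mathscr{Y}'/S}$; with this in hand, the projection formula for localized Chern classes from \cite[Prop.~1.1]{Bloch} applies directly and gives $\deg\, c_2^{\mathscr{Z}'_k}((\pi')^*\Omega^1_{\mathscr{Y}'/S}) \cap [\mathscr{Z}'] = \deg\, c_2^{\mathscr{Y}'_k}(\Omega^1_{\mathscr{Y}'/S}) \cap (\pi')_*[\mathscr{Z}'] = e \cdot \deg\, c_2^{\mathscr{Y}'_k}(\Omega^1_{\mathscr{Y}'/S}) \cap [\mathscr{Y}']$.
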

\begin{proof}
We only prove the first assertion, the second follows the same line of reasoning. It is deduced from the identity
$$ \mathrm{deg}~c_2^{\mathscr{Z}'_k}((\pi')^*\Omega^1_{\mathscr{Y}'/S}) \cap [\mathscr{Z}'] = \mathrm{deg}~c_2^{\mathscr{Y}'_k}(\Omega^1_{\mathscr{Y}'/S}) \cap e \cdot [\mathscr{Y}'], $$
where the latter term equals $ e \cdot \mathrm{Art}_{\mathscr{Y}'/S}$ by \eqref{eq:Blochformula}. This follows from the projection formula for $L{\pi'}^*\Omega^1_{\mathscr{Y}'/S} = (\pi')^*\Omega^1_{\mathscr{Y}'/S}$ (cf. \textsection \ref{subsec:shortexactsequences}) and the fact that $\pi'$ is generically finite of degree $e$. 

\end{proof}

\subsubsection{}
By the Whitney formula in \cite[Prop.~1.1]{Bloch}, Sequence \eqref{B} gives the identity
\begin{equation}\label{eq:whitneyc2}
c_2^{\mathscr{Z}'_k}(\Omega^1_{\mathscr{Z}'/S}) = c_2^{\mathscr{Z}'_k}((\pi')^*\Omega^1_{\mathscr{Y}'/S}) + c_2^{\mathscr{Z}'_k}(\Omega^1_{\mathscr{Z}'/\mathscr{Y}'}) + c_1((\pi')^*\Omega^1_{\mathscr{Y}'/S}) \cdot c_1^{\mathscr{Z}'_k}(\Omega^1_{\mathscr{Z}'/\mathscr{Y}'}). 
\end{equation}

The last two terms in the above expression are difficult to control in general. We will need the following crucial computation of these in terms of local contributions over the standard weak wild models:

\begin{prop}\label{Prop:Lloc}
Let $P$ run over the points in $\mathscr{Z}$ with non-trivial stabilizer. Then the sum 
$$ c_2^{\mathscr{Z'}_k}(\Omega^1_{\mathscr{Z}'/S}) - c_2^{\mathscr{Z'}_k}((\pi')^*\Omega^1_{\mathscr{Y}'/S}) $$
equals the corresponding sum over the  weak wild models as follows:
$$\sum_P \left( c_2^{\mathscr{Z}'_{P,k}}(\Omega^1_{\mathscr{Z}'_P/S}) - c_2^{\mathscr{Z}'_{P,k}}((\pi'_{P})^*\Omega^1_{\mathscr{Y}'_Q/S_P})\right). $$

\end{prop}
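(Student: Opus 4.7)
The plan is to expand both sides of the asserted equality via Whitney's formula \eqref{eq:whitneyc2} applied to the short exact sequence \eqref{B}, both globally on $\mathscr{Z}'$ and on each local model $\mathscr{Z}'_P$. This reduces the proof to two termwise identities: one for the pure localized class $c_2^{\mathscr{Z}'_k}(\Omega^1_{\mathscr{Z}'/\mathscr{Y}'})$, and one for the mixed product $c_1((\pi')^*\Omega^1_{\mathscr{Y}'/S}) \cdot c_1^{\mathscr{Z}'_k}(\Omega^1_{\mathscr{Z}'/\mathscr{Y}'})$, each to be matched against the corresponding sum of local contributions.

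The key geometric observation, which I would establish first, is that $\pi'$ is étale outside the disjoint union $W := \bigsqcup_P W_P$, where $W_P := \phi^{-1}(P)$ ranges over preimages of the finitely many points $P \in \mathscr{Z}_k$ with non-trivial stabilizer. This follows because $\phi : \mathscr{Z}' \to \mathscr{Z}$ is the identity away from such $P$, because $\varphi : \mathscr{Y}' \to \mathscr{Y}$ is an isomorphism away from $\mathrm{Sing}(\mathscr{Y})$, and because $\pi$ is étale on the free locus of the $G$-action. Consequently $\Omega^1_{\mathscr{Z}'/\mathscr{Y}'}$ is supported on $W$, and by Proposition \ref{prop:chernclassproperties} (1) both $c_1^{\mathscr{Z}'_k}(\Omega^1_{\mathscr{Z}'/\mathscr{Y}'})$ and $c_2^{\mathscr{Z}'_k}(\Omega^1_{\mathscr{Z}'/\mathscr{Y}'})$ decompose as sums over $P$ of classes supported on the respective $W_P$.

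For each purely localized summand, the comparison with the local model reduces to completion invariance: by Proposition \ref{prop:chernclassproperties} (3)--(4), the class $c_2^{W_P}(\Omega^1_{\mathscr{Z}'/\mathscr{Y}'}|_{U_P})$ on a small open neighborhood $U_P$ of $W_P$ depends only on the formal completion of $\mathscr{Z}'$ along $W_P$, and by Lemma \ref{lem:complocglob} this completion is identified with the analogous completion of $\mathscr{Z}'_P$, together with the compatible identification of the completions of $\mathscr{Y}'$ and $\mathscr{Y}'_Q$. Item (4) of Proposition \ref{prop:chernclassproperties} is stated for the absolute cotangent sheaf, but its proof carries over to $\Omega^1_{\mathscr{Z}'/\mathscr{Y}'}$ once the target completions have been matched.

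The main obstacle is the mixed term $c_1((\pi')^*\Omega^1_{\mathscr{Y}'/S}) \cdot c_1^{\mathscr{Z}'_k}(\Omega^1_{\mathscr{Z}'/\mathscr{Y}'})$, whose first factor is a global, non-localized class. My plan is to use that the second factor already decomposes as $\sum_P c_1^{W_P}(\Omega^1_{\mathscr{Z}'/\mathscr{Y}'}|_{U_P})$, and that the cap product of a global class with a class supported on $W_P$ depends only on the restriction of the former to a formal neighborhood of $W_P$. The flat base change argument in the proof of Proposition \ref{prop:chernclassproperties}, combined with Lemma \ref{lem:complocglob}, then identifies this restriction with the pullback of $\Omega^1_{\mathscr{Y}'_Q/S_P}$ under $\pi'_P$. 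Patching the resulting local identifications at each $P$ and applying Whitney in reverse on each $\mathscr{Z}'_P$ assembles the pieces into the desired equality.
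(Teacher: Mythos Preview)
Your overall architecture matches the paper's: apply Whitney \eqref{eq:whitneyc2} on both sides, split the difference into the pure term $c_2^{\mathscr{Z}'_k}(\Omega^1_{\mathscr{Z}'/\mathscr{Y}'})$ and the mixed term $c_1((\pi')^*\Omega^1_{\mathscr{Y}'/S})\cdot c_1^{\mathscr{Z}'_k}(\Omega^1_{\mathscr{Z}'/\mathscr{Y}'})$, localize over the $W_P$, and compare to the local models via Lemma~\ref{lem:complocglob}. Your treatment of the pure term is essentially the paper's.

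The gap is in the mixed term. You assert that flat base change plus Lemma~\ref{lem:complocglob} ``identifies this restriction with the pullback of $\Omega^1_{\mathscr{Y}'_Q/S_P}$ under $\pi'_P$''. This is not true as stated when $G_P\neq G$: the completions $\widehat{\mathscr{Y}}'_Q$ in the two pictures agree, but the bases do not --- globally one works over $S$, locally over $S_P$. Hence the completed sheaves $\Omega^1_{\widehat{\mathscr{Y}}'_Q/S}$ and $\Omega^1_{\widehat{\mathscr{Y}}'_Q/S_P}$ differ, and the proposed identification of first Chern classes on the formal neighbourhood is unjustified. (One could try to salvage it by arguing that the discrepancy comes from the pullback of $\Omega^1_{S_P/S}$, whose $c_1$ vanishes for the same reason that $c_1(g^*\Omega^1_{S'/S})=0$; but you do not make this argument, and it requires setting up and checking exactness of the relevant sequence on the completion.)

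The paper circumvents this by not comparing $(\pi')^*\Omega^1_{\mathscr{Y}'/S}$ directly. Instead it introduces a third short exact sequence, for $\phi:\mathscr{Z}'\to\mathscr{Z}$, and combines it with \eqref{A} and \eqref{B} to rewrite
\[
c_1\bigl((\pi')^*\Omega^1_{\mathscr{Y}'/S}\bigr)=c_1(\Omega^1_{\mathscr{Z}'/\mathscr{Y}'})-c_1(\phi^*\Omega^1_{\mathscr{Z}/S'})+c_1(\Omega^1_{\mathscr{Z}'/\mathscr{Z}}),
\]
using $c_1(g^*\Omega^1_{S'/S})=0$. Each of the three sheaves on the right has a local-model counterpart that genuinely agrees on the formal completion: $\Omega^1_{\mathscr{Z}'/\mathscr{Y}'}$ and $\Omega^1_{\mathscr{Z}'/\mathscr{Z}}$ by the Cartesian diagram of Lemma~\ref{lem:complocglob}, and $\phi^*\Omega^1_{\mathscr{Z}/S'}$ because both $\mathscr{Z}$ and $\mathscr{Z}_P$ are smooth over the \emph{same} base $S'$. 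This use of $S'$ as the common top base is the missing idea in your outline.
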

\begin{proof}
Let us denote by $ E_P $ the exceptional locus over $P$ in $ \phi \colon \mathscr{Z}' \to \mathscr{Z}$. As explained in Lemma \ref{lem:complocglob}, this is also the exceptional locus of $ \widehat{\mathscr{Z}}'_P \to \widehat{\mathscr{Z}}_P$, which is moreover obtained by base change of $\phi $ to the completion at $P$. The sheaf $\Omega_{\cZ'/\cY'}$ is trivial on the complement of $ \sqcup_P E_P $.

For each $P$, there exists a suitable open $ U_P \subset \mathscr{Z}'$ containing $E_P$ and having empty intersection with any other $E_{P'}$. Then by the first two properties recalled in Proposition \ref{prop:chernclassproperties}
$$ c_i^{\mathscr{Z}'_{k}}(\Omega_{\cZ'/\cY'}) = \sum_P c_i^{E_P}(\Omega_{\cZ'/\cY'}) \cap [U_P], $$
and we claim that
\begin{equation}\label{eq:betai} \beta_{i,P} := c_i^{E_P}(\Omega_{\cZ'/\cY'}) \cap [U_P] = c_i^{E_P}(\Omega_{\cZ_P'/\cY_Q'}). 
\end{equation}

Because the diagram
\begin{equation}\label{ABCD}
\xymatrix{
    \widehat{\mathscr{Z}}'_P  \ar[r]\ar[d] & \widehat{\mathscr{Y}}'_Q  \ar[d] \\
   \mathscr{Z'}   \ar[r] & \mathscr{Y'}
}
\end{equation}
is Cartesian by Lemma \ref{lem:complocglob}, we have
\begin{equation}\label{eq:pullbackjpOmega} j_P^*\Omega^1_{\mathscr{Z}'/\mathscr{Y}'} = \Omega^1_{\widehat{\mathscr{Z}}'_P/\widehat{\mathscr{Y}}'_Q},  
\end{equation}
where $j_P: \mathscr{Z}'_P \to  \mathscr{Z}'$ is the natural map. An argument analogous to that of the proof of Proposition \ref{prop:chernclassproperties} \eqref{item4:chernclassproperties}  shows that the $\beta_{i,P}$ only depend on the completions. Since the same argument applies to $\Omega_{\mathscr{Z}'_P/\mathscr{Y}'_Q}$ which also base changes to $\Omega^1_{\widehat{\mathscr{Z}}'_P/\widehat{\mathscr{Y}}'_Q}$ the formula in \eqref{eq:betai} is  proven. 

For the rest of the proof, note that the sum we want to relate to the weak wild models, by \eqref{eq:whitneyc2}, consists of the sum  over $P$ of the terms
$$\beta_{2,P} + c_1((\pi')^*\Omega^1_{\mathscr{Y}'/S} ) \cap \beta_{1,P}. $$

 We have already proven part of the statement for the  $\beta_{i,P}$. To finish, we must show that the term $c_1((\pi')^*\Omega^1_{\mathscr{Y}'/S} ) \cap \beta_{1,P}$ only depends on the completion, in a reasonable way. For this, we first note that by Sequences \eqref{A} and \eqref{B}, as well the exact sequence 
$$ 0 \to \mathscr{\phi}^*\Omega^1_{\mathscr{Z}/S'} \to \Omega^1_{\mathscr{Z}'/S'} \to \Omega^1_{\mathscr{Z}'/\mathscr{Z}} \to 0, $$
so that we have 
\begin{equation}\label{eq:c1computationindependence} c_1((\pi')^*\Omega^1_{\mathscr{Y}'/S}) = c_1(\Omega^1_{\mathscr{Z}'/\mathscr{Y}'}) - c_1(\phi^* \Omega^1_{\mathscr{Z}/S'}) + c_1(\Omega^1_{\mathscr{Z}'/\mathscr{Z}}). 
\end{equation}
Indeed, this follows from additivity of the first Chern class, together with the fact that $ c_1(g^*\Omega^1_{S'/S}) = 0 $, since $\Omega^1_{S'/S}$ has a finite free resolution by modules of rank one on $S'$. 
We show that the right hand of \eqref{eq:c1computationindependence}, when restricted to a neighborhood of $E_P$, only depends on the formal completion in a controlled fashion. 

The inclusion $k: E_P \to \mathscr{Z}' $ factors via $\widehat{k}: E_P \to \widehat{\mathscr{Z}}'_P$ and $ j_P \colon \widehat{\mathscr{Z}}'_P \to \mathscr{Z}' $. Since $j_P$ is flat,  $Lk^{\ast} = L\widehat{k}^{\ast } Lj_P^{\ast} = L\widehat{k}^{\ast } j_P^{\ast} $. Also, for a coherent sheaf $\mathcal{F}$ on $\mathscr{Z}'$, one has $$c_1(\mathcal{F}) \cap \beta_{1,P} = c_1(Lk^{\ast} \mathcal{F}) \cap \beta_{1,P} = c_1(L\widehat{k}^{\ast} j_P^\ast \mathcal{F}) \cap \beta_{1,P} = c_1(j_P^\ast \mathcal{F}) \cap \beta_{1,P}.$$
This reduces the statement to prove that the application of $j_P^{\ast}$ to each of the three sheaves appearing in the right hand side of \eqref{eq:c1computationindependence} only depend on $\widehat{\mathscr{Z}}'_P$. For $\Omega_{\mathscr{Z}'/\mathscr{Y}'}$, this was already noted in the discussion surrounding \eqref{eq:pullbackjpOmega}. A similar argument shows that 
$$ j_P^*\Omega^1_{\mathscr{Z}'/\mathscr{Z}} = \Omega^1_{\widehat{\mathscr{Z}}'_P/\widehat{\mathscr{Z}}_P}.  $$ For $\phi^* \Omega^1_{\mathscr{Z}/S'}$, this is because the pullback of $ \Omega^1_{\mathscr{Z}/S'}$ to $\widehat{\mathscr{Z}}_P$ only depends on the completion, as in the proof of Propositon \ref{prop:chernclassproperties}).

Repeating the argument for $(\pi'_P)^*\Omega^1_{\mathscr{Y}'_Q/S_P}$ therefore yields exactly the same description.

\end{proof}

\subsection{Proof using localized Chern classes}
Now we give the proof of Proposition \ref{Prop:KeyIden}.

\begin{proof}
Using Proposition \ref{Prop:Lloc} and Lemma \ref{lemma:cherncomp2} we can write
$$ c_2^{\mathscr{Z}'_k}(\Omega^1_{\mathscr{Z}'/S}) - e \cdot (\sum_Q a_Q + \overline{\chi} - \chi + \mathrm{Sw}(C)) $$
$$ = \sum_P \left(c_2^{\mathscr{Z}'_{P,k}}(\Omega^1_{\mathscr{Z}'_P/S_P}) - e_P \cdot a_Q  \right)$$

Combining with Lemma \ref{lemma:cherncomp1}, this becomes
$$  \sum_P b_P - (e - 1 + \mathbf{sw}) \cdot \chi - e \cdot (\sum_Q a_Q + \overline{\chi} - \chi + \mathrm{Sw}(C)) =  $$
$$ \sum_P \left( b_P - (e_P - 1 + \mathbf{sw}_Q) \cdot 2 - e_P \cdot a_Q  \right).  $$
For every $P$ mapping to a singular point $Q$ there are $e/e_P$ points in the $G$-orbit of $P$, hence all terms involving the numbers $a_Q$ cancel. Therefore the identity becomes (after also dividing by $e$ on both sides)
$$ - (e - 1 + \mathbf{sw}) \cdot \frac{\chi}{e} - (\overline{\chi} - \chi + \mathrm{Sw}(C)) = $$
$$ - 2 \sum_P \left( \frac{e_P}{e} - \frac{1}{e} + \frac{\mathbf{sw}_Q}{e} \right). $$

In the sequel we use the notation for points of type $i$ introduced in Definition \ref{def:pointtypei}. The right hand side can be written
%\footnote{should probably give internal reference to where we have introduced the terminology "type $i$".}
$$ - 2 \sum_{i=0}^{r-1} \sum_{j=1}^{m_i} p^i \left( \frac{p^{r-i}}{p^r} - \frac{1}{p^r} + \frac{\mathbf{sw}_Q}{p^r} \right) = $$
$$ - 2 \sum_{i=0}^{r-1} m_i \left( 1 - \frac{1}{p^{r-i}} \right) - 2 \sum_{i=0}^{r-1} \sum_{j=1}^{m_i} \frac{\mathbf{sw}_{Q_{i,j}}}{p^{r-i}}. $$

Recall that the Riemann-Hurwitz formula for $ \mathscr{Z}_k \to \mathscr{Z}_k/G $ states that 
$$ - \frac{\chi}{e} = - \overline{\chi} + 2 \sum_{i=0}^{r-1} m_i \left( 1 - \frac{1}{p^{r-i}}  \right). $$
Using this relation, we compute the left hand side as follows:
$$ \left( \frac{\chi}{e} - \overline{\chi} \right) + \mathbf{sw} \cdot \left( - \overline{\chi}  + 2 \sum_{i=0}^{r-1} m_i \left( 1 - \frac{1}{p^{r-i}}\right) \right) - \mathrm{Sw}(C) = $$
$$ - 2 \sum_{i=0}^{r-1} m_i \left( 1 - \frac{1}{p^{r-i}}  \right) + \mathbf{sw} \cdot \left( - \overline{\chi}  + 2 \sum_{i=0}^{r-1} m_i \left( 1 - \frac{1}{p^{r-i}}\right) \right) - \mathrm{Sw}(C). $$

We can now immediately derive the identity
$$ - \mathrm{Sw}(C) = - 2 \sum_{i=0}^{r-1} \sum_{j=1}^{m_i} \frac{\mathbf{sw}_{Q_{i,j}}}{p^{r-i}} + \mathbf{sw} \cdot \left( \overline{\chi}  - 2 \sum_{i=0}^{r-1} m_i \left( 1 - \frac{1}{p^{r-i}}\right) \right), $$
which is what we needed to prove.

\end{proof}

\subsection{Direct argument}

We briefly explain a different approach to Proposition \ref{Prop:KeyIden}. The key ingredients are: 

(1) The Swan conductor $\mathrm{Sw}(C)$ can formally be expressed in terms of data associated with the filtration by the higher ramification subgroups $G_i \subset G$, and the abelian rank $a_i$ of $ C \times_K (K')^{G_i}$, for each $i$. 

(2) The same data also appears in the Riemann-Hurwitz formula for each of the two maps in the factorization
$$ \mathscr{Z}_k \to \mathscr{Z}_k/G_i \to \mathscr{Z}_k/G $$
(one checks that the action of $G/G_i$ is again weakly ramified), in addition to ramification data for the action of $G$ and $G_i$, respectively. 

(3) The latter is linked to the "local" Swan conductors $ \mathbf{sw}_Q$, which can be formally expressed in terms of the ramification filtration of $K'/(K')^{G_P}$, where $P$ is a preimage of $Q$.

With this input, it is possible to verify the identity in Proposition \ref{Prop:KeyIden} through a lengthy direct computation.

\section*{Acknowledgements}

The authors are grateful for interesting conversations related to this project with many colleagues, especially Christian Johansson and Gerard Freixas i Montplet. We moreover thank Tong Zhang for, in particular, bringing \cite{SLTan} to our attention. 

The first author is supported by the Swedish Research Council, VR grant 2021-03838 "Mirror symmetry in genus one". 

\bibliographystyle{amsplain}
\bibliography{main}{}

\end{document}